\newtheorem{thm}{Theorem}[section]
\newtheorem{lem}[thm]{Lemma}
\newtheorem{prop}[thm]{Proposition}
\newtheorem{defn}[thm]{Definition}
\newtheorem{rem}[thm]{Remark}
\theoremstyle{definition}
\numberwithin{equation}{section}
\renewcommand{\Re}{\mathrm{Re}}
\renewcommand{\Im}{\mathrm{Im}}
\newcommand{\C}{\mathbb{C}}
\renewcommand{\div}{\operatorname{div}}
\newcommand{\n}[1]{\langle #1 \rangle}
\newcommand{\N}{\mathbb{N}}
\newcommand{\R}{\mathbb{R}}
\newcommand{\pM}{{\partial M}}
\newcommand{\tr}{\hbox{tr}\,}
\newcommand{\scl}{\mathrm{scl}}
\def\hat{\widehat}
\def\tilde{\widetilde}
\def \bfo {\begin {eqnarray*} }
\def \efo {\end {eqnarray*} }
\def \ba {\begin {eqnarray*} }
\def \ea {\end {eqnarray*} }
\def \beq {\begin {eqnarray}}
\def \eeq {\end {eqnarray}}
\def \supp {\hbox{supp }}
\def \det {\hbox{det}}
\def \p {\partial}
\newcommand{\confm}{c^{-\frac{n-2}{2}}}
\newcommand{\openbdy}{(0, T)\times \p M}
\newcommand{\tT}{\tilde{t}}
\newcommand{\conf}{c^{-\frac{n-2}{4}}}
\def\hat{\widehat}
\def\tilde{\widetilde}
\def \bfo {\begin {eqnarray*} }
\def \efo {\end {eqnarray*} }
\def \ba {\begin {eqnarray*} }
\def \ea {\end {eqnarray*} }
\def \beq {\begin {eqnarray}}
\def \eeq {\end {eqnarray}}
\def \supp {\hbox{supp }}
\def \det {\hbox{det}}
\def \p {\partial}
\newcommand{\cF}{\mathcal{F}}
\newcommand{\cO}{\mathcal{O}}
\begin{document}
\title[Partial data inverse problem for hyperbolic equation]
{Partial data inverse problem for hyperbolic equation with time-dependent damping coefficient and potential}

\author[Liu]{Boya Liu}
\address{B. Liu, Department of Mathematics\\
North Carolina State University, Raleigh\\ 
NC 27695, USA}
\email{bliu35@ncsu.edu}

\author[Saksala]{Teemu Saksala}
\address{T. Saksala, Department of Mathematics\\
North Carolina State University, Raleigh\\ 
NC 27695, USA}
\email{tssaksal@ncsu.edu}

\author[Yan]{Lili Yan}
\address{L. Yan, School of Mathematics, University of Minnesota, Minneapolis, MN 55455, USA}
\email{lyan@umn.edu}

\begin{abstract}
We study an inverse problem of determining a time-dependent damping coefficient and potential appearing in the wave equation in a compact Riemannian manifold of dimension three or higher. More specifically, we are concerned with the case of conformally transversally anisotropic manifolds, or in other words, compact Riemannian manifolds with boundary conformally embedded in a product of the Euclidean line and a transversal manifold. With an additional assumption of the attenuated geodesic ray transform being injective on the transversal manifold, we prove that the knowledge of a certain partial Cauchy data set determines the time-dependent damping coefficient and potential uniquely.
\end{abstract}

\maketitle

\section{Introduction and Statement of Results}
\label{sec:intro}



This paper is devoted to an inverse problem of a hyperbolic initial boundary value problem, with the aim of determining lower order time-dependent perturbations, namely, a scalar-valued damping coefficient and potential of a Riemannian wave operator, from a set of partial Cauchy data. As introduced in \cite{Kian_damping}, from the physical point of view, this inverse problem is concerned with determining properties such as the time-evolving damping force and the density of an inhomogeneous medium by probing the medium with disturbances generated on the lateral boundary and at the initial time, and by measuring the response at the end of the experiment as well as on some part of the lateral boundary. 

To state the inverse problem considered in this paper, let $(M, g)$ be a smooth, compact, oriented Riemannian manifold of dimension $n \ge 3$ with smooth boundary $\p M$. We denote 
the spacetime $Q =(0,T)\times M^{\mathrm{int}}$ with $0<T<\infty$, $\overline{Q}$ the closure of $Q$, and $\Sigma = \openbdy$ the lateral boundary of $Q$. Recall that the Laplace-Beltrami operator $\Delta_g$ of the metric $g$ acts on  $C^2$-smooth functions according to the following expression in local coordinates $x_1,\ldots,x_n$ of the manifold $M$:
\[
\Delta_g v(x)=|g|^{-1/2}(x)\p_{x^j}\left(g^{jk}(x)|g(x)|^{1/2}\p_{x^k}v(x)\right), \quad x \in M.
\]
Here $|g|$ and $g^{jk}$ denote the absolute value of the determinant and the inverse of $g_{jk}$, respectively. 

For a given smooth and strictly positive function $c(x)$ on $M$, we consider the wave operator
\begin{equation}
\label{eq:wave_conformal}
\Box_{c,g}=c(x)^{-1}\p_t^2-\Delta_g,
\end{equation}
whose coefficients are time-independent. In this paper we study an inverse problem for the following linear hyperbolic partial differential operator
\begin{equation}
\label{eq:damping_conformal}
\mathcal{L}_{c,g,a,q}=\Box_{c,g}+a(t,x)\p_t+q(t, x), \quad (t,x)\in Q,
\end{equation}
with time-dependent lower order coefficients $a\in W^{1,\infty}(Q)$ (the \textit{damping coefficient}) and $q\in C(\overline{Q})$ (the \textit{potential}).
%
%
Our first geometric assumption is the following.
\begin{defn}
\label{def:CTA_manifolds}
A Riemannian manifold $(M,g)$ of dimension $n\ge 3$ with boundary $\p M$ is called conformally transversally anisotropic (CTA) if $M$ is a compact subset of a manifold $\R\times M_0^{\text{int}}$ 
with smooth boundary and nonempty interior, and $g= c(e \oplus g_0)$. Here $(\R,e)$ is the real line, $(M_0,g_0)$ is a smooth compact $(n-1)$-dimensional Riemannian manifold with smooth boundary, called the transversal manifold, and $c\in C^\infty(\R\times M_0)$ is a strictly positive function. 
\end{defn}
Examples of CTA manifolds include precompact smooth proper subsets of Euclidean, spherical, and hyperbolic spaces. We refer readers to \cite{Ferreira_Kur_Las_Salo} for more examples. 
Since the manifold $M$ is embedded into the product manifold $\R \times M_{0}^{\mathrm{int}}$, we can write every point $x\in M$ in the form $x=(x_1,x')$, where $x_1 \in \R$ and $x'\in M_0$. 
In particular, the projection $\varphi(x) = x_1$ is a \textit{limiting Carleman weight}. It was established in \cite[Theorem 1.2]{Ferreira_Kenig_Salo_Uhlmann} that the existence of a limiting Carleman weight implies that a conformal multiple of the metric $g$ admits a parallel unit vector field, and the converse holds for simply connected manifolds. Locally, the latter condition is equivalent to the fact that the manifold $(M,g)$ is conformal to the product of an interval and some Riemannian manifold $(M_0,g_0)$ of one dimension less. 

The limiting Carleman weight $\varphi$ gives us a canonical way to define the front and back faces of $\p M$ and $\p Q$. Let $\nu$ be the outward unit normal vector to $\pM$ with respect to the metric $g$. We denote $\p M_\pm = \{x \in \p M: \pm \p_\nu\varphi(x) \ge 0\}$ and $\Sigma_\pm=(0, T)\times \p M_\pm^{\text{int}}$. 
Then we define  $U=(0, T)\times U'$ and $V=(0,T)\times V'$, where $U',V'\subset \p M$ are open neighborhoods of $\p M_+$, $\p M_-$, respectively. 

The goal of this paper is to show that the time-dependent damping coefficient $a(t,x)$ and potential $q(t,x)$, appearing in \eqref{eq:damping_conformal}, can be uniquely determined from the following set of partial Cauchy data:
\begin{equation}
\label{eq:Cauchy_data}
\mathcal{C}_{g, a,q} = \{(u|_\Sigma, u|_{t=0}, u|_{t=T}, \p_tu|_{t=0},\p_\nu u|_V): \:u\in H^1(0,T;L^2(M)), \: \mathcal{L}_{c,g, a,q}u=0\}.
\end{equation}
We will define these data carefully in Section \ref{sec:direct_problem}. 


Notice that in addition to the data measured on the lateral boundary, the set of Cauchy data $\mathcal{C}_{g, a,q}$ also includes measurements made at the initial time $t=0$ and the end time $t=T$. Indeed, it was established in \cite{Isakov_91} that the full lateral boundary data with vanishing initial conditions,
\begin{equation}
\label{eq:Lat_bound_data}
\mathcal{C}_{g,a,q}^{\mathrm{lat}}=\{(u|_\Sigma, \p_\nu u|_\Sigma): u\in H^1(0, T; L^2(M)), \: \mathcal{L}_{g,a,q}u=0, \: u|_{t=0}=\p_tu|_{t=0}=0\}, 
\end{equation}
determines time-independent damping coefficients and potentials uniquely for $T>\mathrm{diam}(M)$, where $M$ is a bounded domain in $\R^n$.  However, due to domain of dependence arguments, as explained for instance in \cite[Subsection 1.1]{Kian_partial_data}, it is only possible to recover a general time-dependent coefficient in the optimal set
\[
\mathcal{D}=\{(t, x)\in Q:  \mathrm{dist}(x, \p M)<t<T- \mathrm{dist}(x, \p M)\}
\]
from $\mathcal{C}_{g,a,q}^{\mathrm{lat}}$. Thus, even for large measurement time $T>0$, a global recovery of time-dependent lower order coefficients of the hyperbolic operator \eqref{eq:damping_conformal} needs some additional information 
at the beginning $\{t=0\}$ and at the end $\{t=T\}$ of the measurement.

Unfortunately, the product structure of the ambient space $\R \times M_0$ of the manifold $(M,g)$ is not quite sufficient for the recovery method presented in this paper. We need to also assume that certain geodesic ray transforms on the transversal manifold $(M_0,g_0)$ are injective. Such assumptions have been successfully implemented to solve many important inverse problems on CTA manifolds, see for instance \cite{Cekic,Ferreira_Kur_Las_Salo,Krupchyk_Uhlmann_magschr,Yan} and the references therein.

Let us now recall some definitions related to geodesic ray transforms on Riemannian manifolds with boundary.
Geodesics of $(M_0,g_0)$ can be parametrized (non-uniquely) by points on the unit sphere bundle $SM_0 = \{(x, \xi) \in TM_0: |\xi|=1\}$. Moreover, we use the notation
\[
\p_\pm SM_0 = \{(x, \xi) \in SM_0: x \in \p M_0, \pm \langle \xi, \nu(x) \rangle > 0\}
\]
for the incoming (--) and outgoing (+) boundaries of $SM_0$. These sets correspond to the geodesics touching the boundary, and $\langle \cdot, \cdot \rangle$ is the Riemannian inner product of $(M_0,g_0)$.

Let $(x, \xi) \in \p_-SM_0$, and let $\gamma = \gamma_{x, \xi}$ be a geodesic of $M_0$ with initial conditions $\gamma(0) = x$ and $\dot{\gamma} (0) = \xi$. Then $\tau_{\mathrm{exit}}(x, \xi)>0$ stands for the first time when $\gamma$ meets $\p M_0$ with the convention that $\tau_{\mathrm{exit}}(x, \xi) = +\infty$ if $\gamma$ stays in the interior of $M_0$. We say that a unit speed geodesic segment $\gamma: [0, \tau_{\mathrm{exit}}(x, \xi)] \to M_0$, $0<\tau_{\mathrm{exit}}(x, \xi)<\infty$, is \textit{nontangential} if $\gamma(0)$,  $\gamma(\tau_{\mathrm{exit}}(x, \xi)) \in \pM_0$, $\dot{\gamma}(0)$ and $\dot{\gamma}(\tau_{\mathrm{exit}}(x, \xi))$ are nontangential vectors to $\p M_0$, and $\gamma(\tau)\in M_0^\mathrm{int}$ for all $0<\tau<\tau_{\mathrm{exit}}(x, \xi)$.

In this paper we shall reduce the determination of unknown time-dependent coefficients $a(x,t)$ and $q(x,t)$ from the set of partial Cauchy data \eqref{eq:Cauchy_data} to the invertibility of the \textit{attenuated geodesic ray transform} on the transversal manifold $(M_0,g_0)$. Given a smooth function $\alpha$ on $M_0$, the attenuated geodesic ray transform of a function $f\colon M_0 \to \R$ is given by
\begin{equation}
\label{eq:geo_trans_def}
I^\alpha(f)(x, \xi) = \int_0^{\tau_{\mathrm{exit}}(x, \xi)}  \exp\bigg[\int_{0}^{t}\alpha(\gamma_{x, \xi}(s))ds\bigg] f(\gamma_{x, \xi}(t))dt, \quad (x, \xi) \in \p_-SM_0 \setminus \Gamma_-, 
\end{equation}
where $\Gamma_- = \{(x, \xi) \in \p_-SM_0: \tau_{\mathrm{exit}}(x, \xi)= +\infty\}$. Our second geometric assumption is the following.

\textbf{Assumption 1.} 
\label{asu:inj}
There exists $\varepsilon>0$ such that for each smooth function $\alpha$ on $M_0$ with $\|\alpha\|_{L^\infty(M_0)}<\varepsilon$, the respective attenuated geodesic ray transform $I^\alpha$ on $(M_0, g_0)$ is injective over continuous functions $f$ in the sense that if $I^\alpha(f)(x, \xi)=0$ for all $(x, \xi) \in \p_-SM_0 \setminus \Gamma_-$ such that $\gamma_{x, \xi}$ is a nontangential geodesic, then $f=0$ in $M_0$.  

It was verified in \cite[Theorem 7.1]{Ferreira_Kenig_Salo_Uhlmann} that simple manifolds always satisfy Assumption 1.
Traditionally, a compact, simply connected Riemannian manifold with smooth boundary is called \textit{simple} if its boundary is strictly convex, and no geodesic has conjugate points.
Also, the injectivity of the geodesic ray transform ($\alpha=0$) on simple manifolds is well-known, see  \cite{Mukhometov,Sharafutdinov}.

In addition to simple manifolds, there are some other geometric conditions under which the attenuated geodesic ray transform $I^\alpha$ is known to be injective.
For instance, if the manifold is radially symmetric and satisfies the Herglotz condition, then $I^\alpha$ is injective whenever the attenuation $\alpha$ is also radially symmetric and Lipschitz continuous \cite[Theorem 29]{deHoop_Ilmavirta}. For the purposes of the current paper it suffices to only consider constant attenuations. The Herglotz condition is a special case of a manifold satisfying a convex foliation condition, and in \cite{paternain2019geodesic} the injectivity of $I^\alpha$ is verified on this type of manifold of dimension $n\ge 3$. Some examples of manifolds satisfying the global foliation condition are the punctured Euclidean space $\R^n \setminus \{0\}$ and the torus $\mathbb{T}^n$. We refer readers to \cite[Section 2]{paternain2019geodesic} for more examples. Finally, we would like to recall that a convex foliation condition does not forbid the existence of conjugate points. Hence, there are many nonsimple Riemannian manifolds with an invertible attenuated geodesic ray transform.

The main result of this paper is the following.
\begin{thm}
\label{thm:main_result_damping}
Let $T>0$. Suppose that $(M, g)$ is a CTA manifold of dimension $n \ge 3$ and that Assumption \ref{asu:inj} holds for the transversal manifold $(M_0,g_0)$. Let $a_i \in W^{1,\infty}(Q)$ and $q_i \in C(\overline{Q})$, $i=1, 2$. If $a_1=a_2$ and $q_1=q_2$ on $\p Q$,
then $\mathcal{C}_{g, a_1, q_1} = \mathcal{C}_{g, a_2, q_2}$ implies that $a_1=a_2$ and $q_1=q_2$ in $Q$.
\end{thm}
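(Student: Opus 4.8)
\medskip
\noindent\textit{Proof strategy.}
The plan is to combine the boundary Carleman estimate for the limiting Carleman weight $\varphi(x)=x_1$ with complex geometric optics (CGO) solutions concentrating on transversal geodesics, in the spirit of the CTA inverse problems recalled above, together with an extra time modulation to capture the $t$-dependence. First I would set up the forward problem so that $\mathcal C_{g,a,q}$ is well defined (Section~\ref{sec:direct_problem}) and record the resulting integral identity. Given a solution $u_1$ of $\mathcal L_{c,g,a_1,q_1}u_1=0$, the equality $\mathcal C_{g,a_1,q_1}=\mathcal C_{g,a_2,q_2}$ provides a solution $u_2$ of $\mathcal L_{c,g,a_2,q_2}u_2=0$ with the same Cauchy data, so that $u:=u_1-u_2$ solves $\mathcal L_{c,g,a_2,q_2}u=-[(a_1-a_2)\partial_t+(q_1-q_2)]u_1$ with $u|_\Sigma=0$, $u|_{t=0}=\partial_t u|_{t=0}=0$, $u|_{t=T}=0$ and $\partial_\nu u|_V=0$. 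Pairing with a solution $w$ of the adjoint equation $\mathcal L_{c,g,a_2,q_2}^{*}w=0$ and integrating by parts yields, up to sign,
\[
\int_Q\big[(a_1-a_2)\partial_t u_1+(q_1-q_2)u_1\big]\overline{w}\,dV_g\,dt
=\int_{\Sigma\setminus V}\partial_\nu u\,\overline{w}\,dS\,dt+\int_{M}c^{-1}\,\partial_t u\,\overline{w}\,\big|_{t=T}\,dV_g .
\]
Everything then hinges on choosing $u_1$ and $w$ so that both right-hand terms are negligible in a limit while the left-hand side localizes onto geodesics of $(M_0,g_0)$.

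The engine is a boundary Carleman estimate for $\Box_{c,g}$ with the weight $\varphi=x_1$, convexified as $\varphi_\varepsilon=\varphi+\tfrac{h}{2\varepsilon}\varphi^2$, with an $H^1$-type right-hand side and boundary contributions of the correct sign, controlling the interior semiclassical norm together with the Neumann trace on $\partial M_-$ by the conjugated source and the Neumann trace on $\partial M_+$; since $a\partial_t$ and $q$ are of lower order, the same estimate applies to $\mathcal L_{c,g,a,q}$ and its adjoint. From it I would (i) construct CGO solutions $u_1=e^{\phi_1/h}(\mathbf v_0+r_1)$ of $\mathcal L_{c,g,a_1,q_1}u_1=0$ and $w=e^{\phi_2/h}(\mathbf w_0+r_2)$ of $\mathcal L_{c,g,a_2,q_2}^{*}w=0$, with phases $\phi_1,\phi_2$ built from $x_1$ and the time variable and satisfying $\phi_2=-\overline{\phi_1}$, with amplitudes $\mathbf v_0,\mathbf w_0$ Gaussian beams concentrating, as a second small parameter tends to zero, on a fixed non-tangential geodesic $\gamma$ of $(M_0,g_0)$ and carrying a bounded modulation in $t$, and with $r_1,r_2$ genuine remainders thanks to the gain in the Carleman estimate. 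The conformal change $u=\conf\tilde u$ turns $\Box_{c,g}$ into $\partial_t^2-\partial_{x_1}^2-\Delta_{g_0}$ plus a known zeroth-order term, simplifying the transport equations for $\mathbf v_0,\mathbf w_0$; the first-order term $a\partial_t$ enters these transport equations, so $\mathbf v_0$ and $\mathbf w_0$ pick up exponential weights $\exp(\mp\tfrac12\int a\,\cdot)$ along $\gamma$ — this is the source of the \emph{attenuated} transform — and because $\phi_2=-\overline{\phi_1}$ these weights cancel in the product $\mathbf v_0\overline{\mathbf w_0}$ as soon as the two damping coefficients coincide. I would also (ii) apply the Carleman estimate to $u=u_1-u_2$ itself to bound $\partial_\nu u$ on $\Sigma\setminus V\subset\Sigma_+$, so that $\int_{\Sigma\setminus V}\partial_\nu u\,\overline w=o(1)$; the $\{t=T\}$ term is removed by taking $w$ to vanish on $\{t=T\}$, which can be arranged within the construction (for instance by symmetrizing the time behaviour of $w$ about $t=T$).

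Substituting these solutions, the total phase of $u_1\overline w$ has vanishing real part, and as $h\to0$ and then the beam width tends to $0$, $u_1\overline w$ converges to a positive known weight, carrying the bounded time modulation, concentrated on $\{x'=\gamma\}$, while the remainder and the boundary contributions disappear. At leading order in the large parameter the damping term dominates the potential term, and one is left with the vanishing of an attenuated geodesic ray transform on $(M_0,g_0)$ of $a_1-a_2$ tested against the time modulation; letting the modulation parameter vary recovers the full $t$-dependence, and Assumption~1, applicable since the surviving attenuation is small, forces $a_1=a_2$ first on the optimal set $\mathcal D$ and then, since $a_1=a_2$ on $\partial Q$ and $u,\partial_t u$ vanish at $t=0$, on all of $Q$. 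With $a_1=a_2$ in hand the identity reduces to $\int_Q(q_1-q_2)u_1\overline w\,dV_g\,dt=o(1)$, the damping-induced weights now cancel in $\mathbf v_0\overline{\mathbf w_0}$, and the same asymptotic analysis gives the vanishing of the (un)attenuated geodesic ray transform of $q_1-q_2$ against the modulation; invoking Assumption~1 once more, together with $q_1=q_2$ on $\partial Q$, yields $q_1=q_2$ in $Q$.

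I expect the main obstacle to be the construction in the previous paragraph: one must produce CGO solutions that are simultaneously compatible with the \emph{partial} Cauchy data — so the convexified weight and the front/back-face decomposition $\partial M_\pm$ have to be tuned so that the uncontrolled Neumann trace on $\Sigma\setminus V$ and the uncontrolled $\partial_t u|_{t=T}$ contribute only lower-order terms — and sufficiently structured (Gaussian beam along a non-tangential transversal geodesic, a bounded time modulation, transport equations correctly incorporating $a$, remainders with a true gain) that the limit of the left-hand side is precisely a geodesic ray transform on $M_0$. Coordinating the several limits — beam width and $h$ to isolate the ray transform, the modulation parameter to restore the $t$-dependence, and the large-parameter expansion to separate $a$ from $q$ — and ensuring that the attenuation which survives is small enough for Assumption~1 to apply, is the technical heart; the remaining ingredients (well-posedness and the precise meaning of $\mathcal C_{g,a,q}$, the integration by parts, the passage from $\mathcal D$ to $Q$, and the use of Assumption~1) are comparatively routine.
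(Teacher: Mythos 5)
Your proposal captures the broad scaffolding that the paper actually uses — an integral identity obtained by pairing the difference of solutions against an adjoint solution, a boundary Carleman estimate for a spacetime weight, exponentially decaying/growing CGO solutions with Gaussian-beam amplitudes concentrating on non-tangential transversal geodesics, remainders controlled in $H^1$ via an interior Carleman estimate, and a two-stage argument recovering first $a_1-a_2$ (the term linear in $s$ dominates) and then $q_1-q_2$. But there are two concrete gaps that would break the proof as written.

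\textbf{The $\{t=T\}$ boundary term.} You propose to make the adjoint CGO $w$ vanish at $t=T$ by ``symmetrizing the time behaviour''. This is not possible within a construction of the form $e^{\pm s(\beta t + x_1)}(\text{quasimode}+\text{remainder})$: imposing $w|_{t=T}=0$ would force subtracting a backward-in-time solution of an IBVP, and — precisely as the paper observes in the introduction for the analogous lateral-boundary trick of Kian--Oksanen — that subtraction only yields an $L^2$ bound on the correction term, not the $H^1$ bound indispensable once a damping term $a\partial_t$ is present. The paper does not modify the CGO; instead the $t=T$ term on the right-hand side of the integral identity \eqref{eq:int_id_2} is controlled directly by the boundary Carleman estimate (Proposition \ref{prop:Car_est}, used in Lemma \ref{lem:rhs_est}) and dies after multiplying the identity by $h$.

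\textbf{Removing the nonlinear exponential weight before invoking Assumption~1.} After substituting the CGOs, multiplying by $h$, and passing to the limit via the concentration property (Theorem \ref{prop:limit_behavior}), the left-hand side does \emph{not} reduce to a clean attenuated geodesic ray transform of $a_1-a_2$; the integrand still carries the factor $e^{\overline{\Phi_1}+\Phi_2}\eta$, where $\Phi_1,\Phi_2$ solve the $\overline{\partial}$- and $\partial$-equations \eqref{eq:transport_eqns} sourced by the two damping coefficients and $\eta$ is an arbitrary holomorphic factor. This factor depends nonlinearly on the unknowns and cannot be absorbed into the weight appearing in Assumption~1, which is a \emph{constant} attenuation $-\sqrt{1-\beta^2}\lambda$ coming from the spacetime exponential, not from the damping. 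The paper eliminates $e^{\overline{\Phi_1}+\Phi_2}\eta$ by the Stokes'-theorem / anti-holomorphic-factorization argument of Krupchyk--Uhlmann (the passage \eqref{eq:Psi_def}–\eqref{eq:asymp_behavior}); your proposal jumps directly from the localized identity to the invertibility of the attenuated ray transform, which is a genuine missing step.

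Two lesser points. The Carleman weight is $\beta t+x_1$ with $\beta\in(1/\sqrt3,1)$, convexified by subtracting $t^2/(2\varepsilon)$ — not $x_1+\tfrac{h}{2\varepsilon}x_1^2$ — and the restriction on $\beta$ is essential both for the sign $3\beta^2-1>0$ in Proposition \ref{thm:boundary_carleman} and for generating a two-dimensional open set of Fourier variables at the end. And the passage from the vanishing ray transform to $a_1=a_2$ is not ``first on $\mathcal D$, then on $Q$''; Assumption~1 gives $\cF_{(t,x_1)}(c(a_1-a_2))=0$ on an open set of $(\xi_1,\xi_2)$-space (using that $(\lambda,\beta)\mapsto-\lambda(\beta,1)$ is a local diffeomorphism away from $\lambda=0$ and that $\sqrt{1-\beta^2}|\lambda|$ can be made small there), and the compact support from $a_1=a_2$ on $\partial Q$ plus Paley--Wiener gives real analyticity of the Fourier transform and hence global vanishing in $Q$ in one stroke.
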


Theorem \ref{thm:main_result_damping} can be viewed as an extension of \cite[Theorem 1.4]{Kian_Oksanen}, where only the potential was considered, to the case of recovering both damping coefficient and potential from the set of partial Cauchy data $\mathcal{C}_{g, a,q}$. From the perspective of a geometric setting, 
this paper extends \cite{Kian_damping} from the Euclidean space, as well as \cite{Kian_Oksanen} from CTA manifolds with a simple transversal manifold, to a larger class of CTA manifolds. 
As in \cite{Kian_Oksanen}, we attack the problem by utilizing tools from the theory of inverse problems for elliptic operators. However, in comparison with the earlier works, we would not be able to relax the simplicity assumption on the transversal manifold without significant modifications to the construction of complex geometric optics (CGO) solutions.

Assumption \ref{asu:inj} of this paper is different from the literature concerning inverse problems for elliptic operators on CTA manifolds; see, for instance, \cite{Ferreira_Kur_Las_Salo,Kenig_Salo, Krupchyk_Uhlmann_magschr}. These works assume the invertibility of the geodesic ray transform $I^\alpha$ for $\alpha=0$. In the case of elliptic operators, where there is only one Euclidean direction $x_1$, the authors reduced the problem first to the attenuated geodesic ray transform. Then the authors recovered the geodesic ray transform for each coefficient in the Taylor expansions of the unknown functions by differentiating an expression similar to (5.21) in our manuscript with respect to the variable $\lambda$ at zero. 
Unfortunately, this approach is not applicable in our case, as the mapping $(\lambda, \beta)\mapsto -\lambda(\beta, 1)$, appearing in (5.21), is a diffeomorphism only if $\lambda \ne 0$. Thus, computing $\lambda$ and $\beta$-derivatives of (5.21) at $\lambda=0$ will not give us the geodesic ray transform of Taylor coefficients of the unknown functions at the origin in the Fourier variables.

\subsection{Previous literature}
\label{ssec:literature} 

The recovery of coefficients appearing in hyperbolic equations from boundary measurements has attracted lots of attention in recent years. Results in this direction are generally divided into two categories with respect to time-independent and time-dependent coefficients. 

Starting with seminal works \cite{Belishev,Belishev_Kurylev}, there has been extensive literature related to the recovery of time-independent coefficients appearing in hyperbolic equations. We refer readers to \cite{Anikonov_Cheng_yama,Bel_Yama,HELIN2018132,Hussein_Lesnic_Yama,LaOk} and references therein for some works in this direction. A powerful tool to prove uniqueness results for time-independent coefficients of hyperbolic equations, including the leading order coefficient, is the boundary control method, which was developed in \cite{Belishev,Belishev_Kurylev}, as well as a time-sharp unique continuation theorem proved in \cite{Tataru}. We refer readers to \cite{Kian_Oksanen_Morancey} for an introduction to the method and \cite{Belishev_07,KKL_book} for reviews. However, it was discovered in \cite{Alinhac,Alinhac_Baoendi} that the unique continuation theorem analogous to \cite{Tataru} may fail when the dependence of coefficients on time is not analytic, which means that the boundary control method is not well-suited to recover time-dependent coefficients in general. 

Aside from the boundary control method, the approach of geometric optic (GO) solutions is also widely utilized to recover time-independent coefficients of hyperbolic equations. Using this approach, the unique recovery of time-independent potential $q$ (with $a=0$) from full lateral boundary Dirichlet-to-Neumann map was established in \cite{Rakesh_Symes}, and \cite{Isakov_91} extended this result to recovery of time-independent damping coefficients using the same boundary data. A uniqueness result from partial boundary measurements was considered in \cite{Eskin_06}. 
The GO solution approach has also been used to obtain stronger stability results \cite{Bel_DDS,Bel_Jel_Yama,Kian_stability_14,Stefanov_Uhlmann} than the boundary control method \cite{And_Kat_Kur_Las_Tay}, but it gives less sharp uniqueness results from the perspective of geometric assumptions than the latter.

Turning the attention to the time-dependent category, most of the results in this direction rely on the use of GO solutions. This approach was first implemented in the context of determining time-dependent coefficients of hyperbolic equations from the knowledge of scattering data by using properties of the light-ray transform \cite{Stefanov}. Recovery of time-dependent potential $q$ from the full lateral boundary data $\mathcal{C}_q^{\mathrm{lat}}$, given by \eqref{eq:Lat_bound_data}, on the infinite cylinder $\R\times \Omega$, where $\Omega$ is a domain in $\R^n$, was established in \cite{Ramm_Sjo}. On a finite cylinder $(0, T)\times \Omega$ with $T>\mathrm{diam}(\Omega)$, it was proved in \cite{Rakesh_Ramm} that $\mathcal{C}_q^{\mathrm{lat}}$ determines $q$ uniquely in the optimal set $\mathcal{D}$ of $(0, T)\times \Omega$.
Uniqueness and stability results for determining a general time-dependent potential $q$ from partial data were established in \cite{Kian_partial_data} and \cite{Bellassoued_Rassas,Kian_stability}, respectively. 

Going beyond the Euclidean space, uniqueness results for time-dependent potential $q$ from both full and partial boundary measurements were established in \cite{Kian_Oksanen} on a CTA manifold $(M, g)$, with a simple transversal manifold $M_0$, by using the GO solution approach. For more general manifolds, recently it was proved in \cite{Alexakis_Feiz_Oksanen_22} that the set of full Cauchy data uniquely determines the potential $q$ in Lorentzian manifolds satisfying certain two-sided curvature bounds and some other geometric assumptions, and this curvature bound was weakened in \cite{Alexakis_Feiz_Oksanen_23} near Minkowski geometry. 
In particular, the proof of \cite{Alexakis_Feiz_Oksanen_22} is based on a new optimal unique continuation theorem and can be viewed as a generalization of the boundary control method to the cases without real analyticity assumptions. 

There is also some literature related to determining time-dependent first order perturbations appearing in hyperbolic equations from boundary data analogous to \eqref{eq:Cauchy_data}. In the Euclidean setting, \cite{Kian_damping} extended the result of \cite{Kian_partial_data} to a unique determination of time-dependent damping coefficients and potentials from $\mathcal{C}_{g,a,q}$. When the vector field perturbation appears in the wave equation, similar to elliptic operators such as the magnetic Schr\"odinger operator, one can only recover the first order perturbation up to a gauge invariance, i.e., the differential of a test function in $Q$, see \cite{Eskin} for a uniqueness result when the dependence of coefficients on time is real-analytic, and this analyticity assumption was removed later in \cite{Salazar}. Logarithmic type stability estimates for the vector field perturbation as well as the potential were proved in \cite{Bellassoued_Aicha}. A uniqueness result analogous to \cite{Eskin, Salazar} from a partial Dirichlet-to-Neumann map was obtained in \cite{Krishnan_Vashisth}.  In the non-Euclidean setting, it is established in \cite{Feizmohammadi_et_all_2019} that the hyperbolic Dirichlet-to-Neumann map determines the first order and the zeroth order perturbations up a gauge invariance on a certain non-optimal subset of $Q$ by inverting the light-ray transform of the Lorentzian metric $-dt^2+g(x)$ for one forms and functions. 
%
%

To summarize, there are only two known methods to recover the coefficients appearing in hyperbolic equations from boundary measurements. Since in the current paper the unknown lower order coefficients of the hyperbolic operator $\mathcal{L}_{c,g,a,q}$, as in \eqref{eq:damping_conformal}, are time-dependent, we cannot apply the boundary control method. Thus, our proofs are based on GO solutions. We believe that an introduction of any new method, which can be used to attack the hyperbolic inverse problems, would be a major breakthrough. Obviously, this is not in the scope of the current paper.

Finally, we would like to emphasize that to the best of our knowledge, the global recovery of a full first order time-dependent perturbation (a one-form and potential function in $Q$) of the Riemannian wave operator from a set of partial Cauchy data, and the optimal recovery of these coefficients from the respective hyperbolic Dirichlet-to-Neumann map, remain important open problems. 

\subsection{Outline for the proof of Theorem \ref{thm:main_result_damping}}
The first main ingredient of the proof is the construction of exponentially growing and decaying CGO solutions to the equation $\mathcal{L}_{c ,g, a, q}u=0$ of the form 
\[
u(t, x)=e^{\pm s(\beta t+\varphi(x))}(v_s(t,x)+r_s(t,x)), \quad (t, x)\in Q.
\]
Here $s=\frac{1}{h}+i\lambda$ is a complex number, $h\in (0,1)$ is a semiclassical parameter, $\lambda\in \R$ and $\beta\in (\frac{1}{\sqrt{3}}, 1)$ are some fixed numbers, $v_s$ is a Gaussian beam quasimode, and $r_s$ is a correction term that decays with respect to the parameter $h$. The function $\varphi(x)=x_1$ is a limiting Carleman weight on $M$. 

We exploit the existence of the limiting Carleman weight $\beta t+x_1$ in $Q$ and derive necessary boundary and interior Carleman estimates, see Proposition \ref{thm:boundary_carleman} and Lemma \ref{prop:shift_index}, respectively. 
The boundary Carleman estimates are used to control the solutions on the inaccessible part of the boundary, while the interior Carleman estimates are needed to verify the existence of the correction term $r_s$ in Proposition \ref{prop:solvability}.
In the current paper the Dirichlet boundary values are given on the full lateral boundary, which is in line with many earlier works involving a first order term, see for instance  \cite{Kian_damping,Krishnan_Vashisth,mishra_determining_2021,senapati_stability_2021}. Meanwhile, in the absence of the damping term, but with Dirichlet data measured only on a part of the lateral boundary, the authors of \cite{Kian_partial_data,Kian_Oksanen} constructed GO solutions to the respective hyperbolic equation that vanish initially and on part of the lateral boundary. In this way the authors were able to utilize the boundary Carleman estimate to control their GO solutions on the inaccessible part of the boundary. Unfortunately, this method only provides an $L^2$-estimate for the correction term $r_s$, and due to the existence of the damping coefficient, we need an $H^1$-estimate for $r_s$. This is provided in Proposition \ref{prop:solvability}.

Since the transversal manifold $(M_0,g_0)$ is not necessarily simple in this paper, the approach based on global GO solutions is not applicable to us. To medicate this, in Theorem \ref{prop:Gaussian_beam} we construct Gaussian beam quasimodes for every nontangential geodesic in the transversal manifold $M_0$ by using techniques developed in solving inverse problems for elliptic operators, see for instance \cite{Cekic,Ferreira_Kur_Las_Salo,Krupchyk_Uhlmann_magschr}, followed by a concentration property for the quasimodes given in 
Theorem \ref{prop:limit_behavior}.  The construction of CGO solutions is finalized in Theorem \ref{prop:CGO_solution}. In this part we need the regularity conditions imposed on the unknown time-dependent coefficients $a$ and $q$.

The second main component in the proof is the integral identity \eqref{eq:int_id_2}, whose derivation needs the equivalence of the partial Cauchy data. When the obtained CGO solutions are inserted in the integral identity, the boundary Carleman estimate of Proposition \ref{prop:Car_est} forces the right-hand side of \eqref{eq:int_id_2}
to vanish in the limit $h\to 0$. On the other hand, the concentration property given in Theorem \ref{prop:limit_behavior} implies that the left-hand side of \eqref{eq:int_id_2} converges to the attenuated geodesic ray transform of the Fourier transform (in the two Euclidean variables $(t,x_1)$) of the unknown coefficients in the transversal manifold $(M_0,g_0)$. To carry on this reduction step, we need the regularity and boundary conditions imposed on the unknown time-dependent coefficients $a$ and $q$. We need Assumption \ref{asu:inj} to invert the attenuated geodesic ray transform. We first provide a proof for the uniqueness result for the damping coefficient $a(t,x)$, followed by verifying the uniqueness for the potential $q(t,x)$.

This paper is organized as follows. We begin by carefully defining the set of partial Cauchy data \eqref{eq:Cauchy_data} in Section \ref{sec:direct_problem}.  In Section \ref{sec:Car_est} we derive the boundary and interior Carleman estimates.
In Section \ref{sec:CGO_solution} we construct the CGO solutions to the hyperbolic equation $\mathcal{L}_{c,g,a,q}u=0$ based on Gaussian beam quasimodes in $Q$. Finally, the proof of Theorem \ref{thm:main_result_damping} is presented in Section \ref{sec:proof_of_theorem}.

\subsection*{Acknowledgments}  
We would like to express our gratitude to Katya Krupchyk and Lauri Oksanen for their valuable discussions and suggestions. T.S. is partially supported by the National Science Foundation (DMS 2204997). L.Y. is partially supported by the National Science Foundation (DMS 2109199). We are also very grateful to the anonymous referees for their invaluable feedback, which led to significant improvements of the paper.

\section{Definition of the Partial Cauchy Data 
}
\label{sec:direct_problem}
The goal of this short section is to recall some properties of the weak solutions to the initial boundary value problem  
\begin{equation}
\label{eq:ibvp}
\begin{cases}
\mathcal{L}_{c ,g, a, q}u(t, x)=0 \quad \text{in} \quad Q,
\\
u(0, x)=h_0(x), \: \p_tu(0, x)=h_1(x) \quad \text{in}\quad M,
\\
u(t, x)=f(t, x) \quad \text{on} \quad \Sigma,
\end{cases}
\end{equation}
as introduced in \cite[Section 2]{Kian_damping}. 

We define the space
\[
H_{\Box_{c,g}}(Q)=\{u\in H^1(0,T; L^2(M)): \Box_{c,g} u=(c^{-1}\p_t^2-\Delta_g)u\in L^2(Q)\},
\]
equipped with the norm
\[
\|u\|^2_{H_{\Box_{c,g}}(Q)}=\|u\|^2_{H^1(0,T; L^2(M))}+\| \Box_{c,g}u\|^2_{L^2(Q)}.
\] 
Our starting point is the following result, originally presented in \cite[Theorem A.1]{Kian_partial_data}. 
\begin{lem}
\label{lem:desity}
The space $ H_{\Box_{c,g}}$ is continuously embedded into the closure of $C^\infty(\overline{Q})$ in the space
\[
K_{\Box_{c,g}}(Q)=\{ u\in H^{-1}(0,T; L^2(M)): \Box_{c,g} u\in L^2(Q)\}
\]
equipped with the norm
\[
\|u\|^2_{K_{\Box_{c,g}}(Q)}=\|u\|^2_{H^{-1}(0,T; L^2(M))}+\| \Box_{c,g}u\|^2_{L^2(Q)}.
\]
\end{lem}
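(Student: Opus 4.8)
The plan is to argue by duality. Since $\|z\|_{H^{-1}(0,T;L^2(M))}\le\|z\|_{H^1(0,T;L^2(M))}$, the identity map is a continuous embedding $H_{\Box_{c,g}}(Q)\hookrightarrow K_{\Box_{c,g}}(Q)$, so it suffices to prove that $H_{\Box_{c,g}}(Q)$ is contained in the closure of $C^\infty(\overline{Q})$ in $K_{\Box_{c,g}}(Q)$; by the Hahn--Banach theorem this in turn reduces to showing that every $\Phi\in K_{\Box_{c,g}}(Q)^{*}$ vanishing on $C^\infty(\overline{Q})$ also vanishes on $H_{\Box_{c,g}}(Q)$. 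Now $K_{\Box_{c,g}}(Q)$ is the graph of the closed operator $\Box_{c,g}\colon H^{-1}(0,T;L^2(M))\to L^2(Q)$, hence it embeds isometrically as a closed subspace of $H^{-1}(0,T;L^2(M))\times L^2(Q)$ via $z\mapsto(z,\Box_{c,g}z)$; extending $\Phi$ to the product by Hahn--Banach represents it by a pair $(v,w)\in H^1_0(0,T;L^2(M))\times L^2(Q)$ through
\[
\Phi(z)=\langle z,v\rangle+\int_{Q}w\,\Box_{c,g}z\,dV_g\,dt,\qquad z\in K_{\Box_{c,g}}(Q).
\]
Testing $\Phi=0$ against $\phi\in C^\infty_c(Q)$ gives $\Box_{c,g}w=-v$ in $\mathcal{D}'(Q)$, hence $\Box_{c,g}w=-v\in L^2(Q)$, and testing against all $\phi\in C^\infty(\overline{Q})$ gives in addition the weak vanishing of the full Cauchy data of $w$ on $\{t=0\}$, $\{t=T\}$ and $\Sigma$, i.e.\ $\int_Q(w\,\Box_{c,g}\phi-\phi\,\Box_{c,g}w)\,dV_g\,dt=0$ for all such $\phi$ (recall $\Box_{c,g}$ is formally self-adjoint with respect to $dV_g\,dt$, as $c$ is $t$-independent).

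Next I would upgrade the regularity of $w$ using the theory of the hyperbolic initial boundary value problem. The two facts above say precisely that $w$ is the transposition solution of $\Box_{c,g}w=-v$ in $Q$ with vanishing initial data and vanishing Dirichlet data on $\Sigma$; since $-v\in L^2(Q)$, this solution is unique and coincides with the energy-class solution, so $w\in C([0,T];H^1_0(M))\cap C^1([0,T];L^2(M))$, and by the hidden boundary regularity of such solutions its Neumann trace $\partial_\nu w|_{\Sigma}\in L^2(\Sigma)$ is well defined. Feeding this regularity back into the weak Cauchy data identity and varying the boundary values and normal derivatives of $\phi$ then yields the genuine relations $w|_{t=0}=\partial_t w|_{t=0}=0$, $w|_{t=T}=\partial_t w|_{t=T}=0$, $w|_{\Sigma}=0$ and $\partial_\nu w|_{\Sigma}=0$.

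Finally I would approximate $w$ by smooth functions and integrate by parts. Extending $w$ by zero past $t=0$ and $t=T$ creates no Dirac masses, because $w$ and $\partial_t w$ vanish there, so the extension still obeys $\Box_{c,g}(\cdot)\in L^2$; mollifying in $t$, which commutes both with $\partial_t^2$ and with $\Delta_g$ (these having $t$-independent coefficients), produces $w_\delta$ smooth in $t$, with $w_\delta\to w$ in $H^1(0,T;L^2(M))$ and $\Box_{c,g}w_\delta\to-v$ in $L^2(Q)$. Now $\partial_t^2 w_\delta\in L^2(Q)$, hence $\Delta_g w_\delta=c^{-1}\partial_t^2 w_\delta-\Box_{c,g}w_\delta\in L^2(Q)$; together with $w_\delta(t,\cdot)\in H^1_0(M)$ and $\partial_\nu w_\delta(t,\cdot)|_{\partial M}=0$ (inherited from $\partial_\nu w|_{\Sigma}=0$), elliptic regularity for the Dirichlet Laplacian gives $w_\delta(t,\cdot)\in H^2_0(M)$ for every $t$. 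Applying pointwise in $t$ a fixed $t$-independent regularizer $\mathcal{R}_\tau\colon H^2_0(M)\to C^\infty_c(M^{\mathrm{int}})$ with $\mathcal{R}_\tau v\to v$ in $H^2(M)$ for every $v\in H^2_0(M)$ then yields $w_{\delta,\tau}\in C^\infty(\overline{Q})$ supported away from $\Sigma$, with $w_{\delta,\tau}\to w_\delta$ and $\Box_{c,g}w_{\delta,\tau}\to\Box_{c,g}w_\delta$ in $L^2(Q)$. For such smooth $w_{\delta,\tau}$, integration by parts against $u\in H_{\Box_{c,g}}(Q)$ is legitimate term by term: the contributions on $\Sigma$ vanish (since $w_{\delta,\tau}$ and its normal derivative vanish near $\Sigma$), and those on $\{t=0\}$ and $\{t=T\}$ are $o(1)$ because $w_\delta$ and $\partial_t w_\delta$ are small there while $u|_{t=0,T}\in L^2(M)$ and $\partial_t u|_{t=0,T}\in H^{-1}(M)$ are fixed; letting $\tau\to0$ and then $\delta\to0$ gives $\int_Q w\,\Box_{c,g}u\,dV_g\,dt=\int_Q u\,\Box_{c,g}w\,dV_g\,dt=-\langle u,v\rangle$, so that $\Phi(u)=\langle u,v\rangle+\int_Q w\,\Box_{c,g}u\,dV_g\,dt=0$.

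The main difficulty is entirely the function-space bookkeeping, and the obstacle to keep in mind is that neither $u\in H_{\Box_{c,g}}(Q)$ nor the energy-class function $w$ has $\partial_t^2(\cdot)$ or $\Delta_g(\cdot)$ in $L^2(Q)$ separately --- only the combination $\Box_{c,g}(\cdot)$ --- so a regularization affecting these two terms unequally (for instance a dilation in $t$) can destroy membership in $K_{\Box_{c,g}}(Q)$; the scheme above is arranged precisely so that each step either commutes with $\Box_{c,g}$ (mollification in $t$) or is controlled by it (the $H^2_0$-valued spatial regularization). The remaining ingredients --- completeness of graph spaces, the Lions--Magenes trace theory giving meaning to $u|_{t=0,T}$ and $\partial_t u|_{t=0,T}$, energy estimates and hidden boundary regularity for the wave equation, and the fact that $C^\infty_c(M^{\mathrm{int}})$ is by definition dense in $H^2_0(M)$ --- are all classical, and this is the content of \cite[Theorem~A.1]{Kian_partial_data}, to which we refer for the details.
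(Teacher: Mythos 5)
Your proposal is correct, and its architecture --- reduction via Hahn--Banach to the annihilator of $C^\infty(\overline{Q})$ in $K_{\Box_{c,g}}(Q)^\ast$, representation of that annihilator through the graph embedding of $K_{\Box_{c,g}}(Q)$ into $H^{-1}(0,T;L^2(M))\times L^2(Q)$, identification of the resulting $w$ as the transposition (hence energy-class) solution with vanishing Cauchy data, and then a two-parameter smoothing ($t$-mollification after zero extension, followed by a spatial regularizer into $C^\infty_c(M^{\mathrm{int}})$) to justify the final integration by parts --- is the standard one for this kind of graph-space density statement. The paper itself offers no proof of Lemma~\ref{lem:desity}; it simply records the result and refers to \cite[Theorem~A.1]{Kian_partial_data}, as you do at the end. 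So this is not a genuinely different route: you have reconstructed the argument behind the cited reference. Two minor points worth tightening if you were to write this out in full: (i) the passage from the weak Green identity for $\phi\in C^\infty(\overline{Q})$ to the statement that $w$ is the transposition solution needs a small density step (approximate the backward test solutions $\psi$ by smooth ones, using the smooth regularity theory of the IBVP from \cite{Lasiecka_Lions_Triggiani}); and (ii) the spatial regularizer $\mathcal{R}_\tau$ must be chosen so that it is uniformly bounded and converges to the identity simultaneously on $L^2(M)$ and on $H^2_0(M)$ (e.g.\ inward translation in boundary normal coordinates followed by mollification), since it is applied both to $w_\delta(t,\cdot)\in H^2_0(M)$ and, implicitly, to $\partial_t^2w_\delta(t,\cdot)\in L^2(M)$. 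You acknowledge both issues as classical, and they are.
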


For each $w\in C^\infty(\overline{Q})$, we introduce two linear maps
\[
\iota_0w=(\iota_{0,1}w,\iota_{0,2}w,\iota_{0,3}w)=(w|_\Sigma, w|_{t=0}, \p_t w|_{t=0}) 
\]
and
\[
\iota_1w=(\iota_{1,1}w,\iota_{1,2}w,\iota_{1,3}w)=(\p_\nu w|_\Sigma, w|_{t=T}, \p_t w|_{t=T}).
\]
\begin{lem}
\label{lem:cont_trace}
The maps $\iota_0$ and $\iota_1$ defined above can be extended continuously to 
\[
\iota_0: H_{\Box_{c,g}}(Q)\to H^{-3}(0,T; H^{-1/2}(\p M))\times H^{-2}(M)\times H^{-4}(M)
\]
and 
\[
\iota_1: H_{\Box_{c,g}}(Q)\to H^{-3}(0,T; H^{-3/2}(\p M))\times H^{-2}(M)\times H^{-4}(M),
\]
respectively.
\end{lem}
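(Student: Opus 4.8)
The plan is to give each component of $\iota_0$ and $\iota_1$ an intrinsic meaning directly on $H_{\Box_{c,g}}(Q)$ by using the equation $\Box_{c,g}u\in L^2(Q)$, so that no approximation by $C^\infty(\overline{Q})$ is needed; each bound then reduces to a bookkeeping of temporal and spatial Sobolev orders together with one classical trace theorem. Throughout, fix $u\in H_{\Box_{c,g}}(Q)$ and set $F:=\Box_{c,g}u\in L^2(Q)$, so $\|F\|_{L^2(Q)}+\|u\|_{H^1(0,T;L^2(M))}\le C\|u\|_{H_{\Box_{c,g}}(Q)}$.

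\emph{Temporal traces.} Since $H^1(0,T;L^2(M))\hookrightarrow C([0,T];L^2(M))$, the values $u|_{t=0}$ and $u|_{t=T}$ are well defined in $L^2(M)\hookrightarrow H^{-2}(M)$, with norm controlled by $\|u\|_{H^1(0,T;L^2(M))}$, and on $C^\infty(\overline{Q})$ they agree with $\iota_{0,2}$, $\iota_{1,2}$. For $\p_t u$ I would use the equation to trade spatial regularity for temporal regularity: from $\p_t^2u=c(\Delta_gu+F)$ and the continuity of $\Delta_g\colon L^2(M)\to H^{-2}(M)$ one gets $\p_t^2u\in L^2(0,T;H^{-2}(M))$ with norm $\lesssim\|u\|_{H_{\Box_{c,g}}(Q)}$; combined with $\p_tu\in L^2(0,T;L^2(M))$, the standard transposition/interpolation lemma for evolution equations (Lions--Magenes) gives $\p_tu\in C([0,T];H^{-1}(M))$, using $(L^2(M),H^{-2}(M))_{1/2,2}=H^{-1}(M)$. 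Hence $\p_tu|_{t=0},\p_tu|_{t=T}$ are defined in $H^{-1}(M)\hookrightarrow H^{-4}(M)$ with the required bound, and they agree with $\iota_{0,3},\iota_{1,3}$ on smooth functions.

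\emph{Lateral traces.} Here $u$ has no spatial trace on its own, so the equation must be used differently. Purely in the time variable, $u\in H^1(0,T;L^2(M))$ gives $\p_t^2u\in H^{-1}(0,T;L^2(M))$, whence $\Delta_gu=c^{-1}\p_t^2u-F\in H^{-1}(0,T;L^2(M))$ with norm $\lesssim\|u\|_{H_{\Box_{c,g}}(Q)}$; together with $u\in H^{-1}(0,T;L^2(M))$ this says $u\in H^{-1}(0,T;\mathcal{H})$, where $\mathcal{H}:=\{f\in L^2(M):\Delta_gf\in L^2(M)\}$ carries the graph norm. By the Lions--Magenes trace theorem for $\mathcal{H}$, the Dirichlet and Neumann maps extend to bounded operators $\mathcal{H}\to H^{-1/2}(\p M)$ and $\mathcal{H}\to H^{-3/2}(\p M)$; applying these coefficientwise in $t$ yields $u|_\Sigma\in H^{-1}(0,T;H^{-1/2}(\p M))$ and $\p_\nu u|_\Sigma\in H^{-1}(0,T;H^{-3/2}(\p M))$, which embed continuously into $H^{-3}(0,T;H^{-1/2}(\p M))$ and $H^{-3}(0,T;H^{-3/2}(\p M))$ respectively, with norms $\lesssim\|u\|_{H_{\Box_{c,g}}(Q)}$, consistent with $\iota_{0,1},\iota_{1,1}$ on $C^\infty(\overline{Q})$ by density of $C^\infty(\overline{M})$ in $\mathcal{H}$.

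\emph{Main obstacle.} The only non-formal ingredient is the Lions--Magenes trace theorem for the graph space of $\Delta_g$: an $L^2(M)$ function with $L^2(M)$ Laplacian has distributional Cauchy data on $\p M$ in $H^{-1/2}(\p M)\times H^{-3/2}(\p M)$, continuously in the graph norm. On a Euclidean domain this is classical — decompose $f=f_0+h$ with $\Delta_gf_0=\Delta_gf$, $f_0\in H^2\cap H_0^1$, and $h$ harmonic in $L^2$, and invoke the trace theory of $L^2$ harmonic functions via the Poisson operator $H^{-1/2}(\p M)\to L^2(M)$ and its Dirichlet-to-Neumann map $H^{-1/2}(\p M)\to H^{-3/2}(\p M)$ — and on $(M,g)$ it follows by localization with a boundary partition of unity. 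Everything else is routine: the embeddings $H^1(0,T;X)\hookrightarrow C([0,T];X)$, $L^2(M)\hookrightarrow H^{-2}(M)$, $H^{-1}(M)\hookrightarrow H^{-4}(M)$, $H^{-1}(0,T;X)\hookrightarrow H^{-3}(0,T;X)$, the continuity of $\Delta_g$ on $L^2(M)$, and the fact that the target indices in the statement are far from sharp, which leaves ample slack. An alternative route, closer to \cite{Kian_partial_data,Kian_damping}, is first to subtract the finite-energy solution $\tilde{u}$ of $\Box_{c,g}\tilde{u}=F$ with zero Cauchy data — for which the hidden-regularity estimate of Lasiecka--Lions--Triggiani gives $\p_\nu\tilde{u}|_\Sigma\in L^2(\Sigma)$ — and then treat the homogeneous remainder $u-\tilde{u}$ as above.
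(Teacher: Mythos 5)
Your argument is correct in outline and takes a genuinely different route from the paper's. The paper defers to \cite[Proposition A.1]{Kian_partial_data}, which extends $\iota_0,\iota_1$ by density from $C^\infty(\overline{Q})$ (using Lemma~\ref{lem:desity}), pairing against explicitly constructed lifted test functions --- one per trace component --- and integrating by parts via Green's formula; the $H^{-3}$ and $H^{-4}$ target exponents are tuned so that $\Box_{c,g}$ applied to these test functions lands in $H^1_0(0,T;L^2(M))$, letting the proof run even in the larger space $K_{\Box_{c,g}}(Q)$. You instead give each trace an intrinsic meaning on $H_{\Box_{c,g}}(Q)$, trading regularity through the equation: the embedding $H^1(0,T;L^2)\hookrightarrow C([0,T];L^2)$ for $u|_{t=0,T}$, the intermediate-derivative theorem together with $\p_t^2u\in L^2(0,T;H^{-2}(M))$ for $\p_tu|_{t=0,T}$, and the Lions--Magenes trace theorem for the graph space $\mathcal{H}=\{f\in L^2:\Delta_gf\in L^2\}$ for the lateral traces. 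This modular packaging is cleaner, sidesteps the density lemma, and --- since you never pass through $K_{\Box_{c,g}}(Q)$ --- gives sharper intermediate targets ($H^{-1}(M)$, $H^{-1}(0,T;\cdot)$) that then embed into the stated spaces. The temporal-trace part is complete as written.

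The lateral-trace step is where the work is hidden. From $u\in L^2(0,T;L^2(M))$ and $\Delta_gu\in H^{-1}(0,T;L^2(M))$ you assert that $u\in H^{-1}(0,T;\mathcal{H})$ and then invoke the trace maps ``coefficientwise in $t$.'' Because $\mathcal{H}$ is a graph space --- its norm couples $f$ and $\Delta_gf$ --- and your two pieces of information on $u$ carry different temporal Sobolev exponents ($L^2$ in $t$ for $u$ itself, $H^{-1}$ in $t$ for $\Delta_gu$), this is not an off-the-shelf Bochner-space statement. An element of $H^{-1}(0,T;\mathcal{H})$ is, concretely, a limit in $H^{-1}(0,T;L^2(M))^2$ of pairs $(v_n,\Delta_gv_n)$ with $v_n\in L^2(0,T;\mathcal{H})$, and producing such a sequence from the given $u$ needs either a temporal mollification with care near the endpoints $t=0,T$, or a duality computation against test functions in $H^1_0(0,T;H^2(M)\cap H^1_0(M))$ lifted from $H^1_0(0,T;H^{1/2}(\p M))$. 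Either route is correct, but when spelled out it is essentially the Green's-formula computation of the cited proof; so you have reorganized the argument rather than eliminated its key step, and the phrase ``applying coefficientwise in $t$'' is precisely where that step is being performed implicitly.
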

\begin{proof}
Since the conformal factor $c$ in $\Box_{c,g}$ is time-independent, the proof is a straightforward modification of the proof for  \cite[Proposition A.1]{Kian_partial_data}.
\end{proof}

We note that by the same argument as in \cite[Section 2]{Kian_damping}, the set 
\[
\mathcal{J}=\{u\in H^1(0,T; L^2(M)): \Box_{c,g} u=0\}
\]
is a closed vector subspace of $H^1(0,T; L^2(M))$, contained in $H_{\Box_{c,g}}(Q)$. Finally, we record the range of the map $\iota_0$
\[
\mathcal{K}:=\{\iota_0w: u\in H_{\Box_{c,g}}(Q)\}\subset H^{-3}(0,T; H^{-1/2}(\p M))\times H^{-2}(M)\times H^{-4}(M).
\]
By an analogous argument to the proof for \cite[Proposition 2.1]{Kian_partial_data}, we get the following result.
\begin{lem}
\label{lem:iota_0_bijection}
The linear map $\iota_0\colon \mathcal{J} \to \mathcal{K}$ is a bijection.
\end{lem}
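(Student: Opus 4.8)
The plan is to establish injectivity and surjectivity of $\iota_0\colon \mathcal J\to\mathcal K$ separately, relying on the well-posedness theory for the initial boundary value problem \eqref{eq:ibvp} that underlies the very definition of $\mathcal J$ and $\mathcal K$. For \emph{injectivity}, suppose $u\in\mathcal J$ satisfies $\iota_0 u=0$, that is $u|_\Sigma=0$, $u|_{t=0}=0$, and $\partial_t u|_{t=0}=0$ in the appropriate dual spaces. Since $u\in\mathcal J$ also solves $\Box_{c,g}u=0$ in $Q$, $u$ is a (very) weak solution of \eqref{eq:ibvp} with vanishing data $f=h_0=h_1=0$. The uniqueness part of the well-posedness statement for \eqref{eq:ibvp} — which is exactly the content invoked from \cite[Section 2]{Kian_damping} and \cite[Appendix A]{Kian_partial_data} in the transposition/duality formulation — then forces $u\equiv 0$ in $Q$. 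The only subtlety is that the traces here are understood in the weak sense provided by Lemma~\ref{lem:cont_trace}, so one argues by the transposition (adjoint) formulation: pairing $u$ against solutions of the backward adjoint problem with arbitrary smooth source and using that all three boundary/initial traces of $u$ vanish shows $u$ is orthogonal to a dense set in $L^2(Q)$, hence $u=0$.

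For \emph{surjectivity}, let $(f,h_0,h_1)\in\mathcal K$, i.e.\ there exists some $w\in H_{\Box_{c,g}}(Q)$ (not necessarily $\Box_{c,g}$-harmonic) with $\iota_0 w=(f,h_0,h_1)$. I want to produce $u\in\mathcal J$ with $\iota_0 u=(f,h_0,h_1)$. The idea is to correct $w$: set $F:=\Box_{c,g}w\in L^2(Q)$ and let $w_0$ be the solution of the initial boundary value problem \eqref{eq:ibvp} with source $-F$ on the right-hand side and \emph{homogeneous} Cauchy data $(0,0,0)$. By the forward well-posedness theory for \eqref{eq:ibvp} (again \cite[Section 2]{Kian_damping}), $w_0$ exists, lies in $H_{\Box_{c,g}}(Q)$, and has $\iota_0 w_0=0$ together with $\Box_{c,g}w_0=-F$. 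Then $u:=w+w_0$ satisfies $\Box_{c,g}u=F-F=0$, so $u\in\mathcal J$, while $\iota_0 u=\iota_0 w+\iota_0 w_0=(f,h_0,h_1)+0=(f,h_0,h_1)$. This proves $\iota_0(\mathcal J)\supseteq\mathcal K$; the reverse inclusion is immediate from the definition of $\mathcal K$, since $\mathcal J\subset H_{\Box_{c,g}}(Q)$.

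The main obstacle is purely one of \emph{regularity bookkeeping} rather than of substance: one must make sense of the boundary and initial traces in the negative-order spaces of Lemma~\ref{lem:cont_trace}, check that the solution operators for \eqref{eq:ibvp} with $L^2(Q)$ source and rough lateral data map into $H_{\Box_{c,g}}(Q)$ so that the traces $\iota_0$ are defined on them, and verify that the duality/transposition argument for uniqueness is legitimate at this low regularity — all of which is handled by the density result Lemma~\ref{lem:desity} and the continuity of $\iota_0,\iota_1$ in Lemma~\ref{lem:cont_trace}. Since these ingredients are already in place, the proof reduces to the short argument above, and as the text indicates it is "an analogous argument to the proof for \cite[Proposition 2.1]{Kian_partial_data}."
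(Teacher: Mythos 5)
Your proposal is correct and follows what is essentially the standard (and almost certainly the paper's intended) argument, since the paper defers to \cite[Proposition 2.1]{Kian_partial_data} without reproducing details: injectivity via the transposition/duality formulation of uniqueness for the wave IBVP, and surjectivity by the correction trick $u = w + w_0$ where $w_0$ solves the forward IBVP with source $-\Box_{c,g}w$ and homogeneous $\iota_0$-data. One small imprecision worth flagging: you invoke \eqref{eq:ibvp}, which is the IBVP for the full operator $\mathcal{L}_{c,g,a,q}$, whereas the definitions of $\mathcal J$, $\mathcal K$, and $\iota_0$ are tied to the bare wave operator $\Box_{c,g}$; what is needed is the well-posedness of the $\Box_{c,g}$-IBVP (i.e.\ the special case $a=q=0$), which of course holds and is what \cite[Theorem 2.1]{Lasiecka_Lions_Triggiani} provides, so this does not affect the validity of the argument.
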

%
By Lemma \ref{lem:iota_0_bijection}, the inverse function $\iota_0^{-1}: \mathcal{K}\to  \mathcal{J}$ exists, and we can use it to define a norm in $\mathcal{K}$ via the formula
\[
\|(f, h_0, h_1)\|_\mathcal{K}=\|\iota_0^{-1}(f,h_0, h_1)\|_{H^1(0,T; L^2(M))}, \quad (f, h_0, h_1)\in \mathcal{K}.
\]
We would like to recall that we have defined $\p M_\pm = \{x \in \p M: \pm \p_\nu\varphi(x) \ge 0\}$ and $V=(0,T)\times V'$,  
where $V'\subset \p M$ is an open neighborhood of $\p M_-$. We are now ready to state and prove the existence and uniqueness of solutions to the initial boundary value problem \eqref{eq:ibvp} with the datum $(f,h_0,h_1)\in \mathcal{K}$.
\begin{prop}
\label{prop:well_posedness}
Let $a\in W^{1,\infty}(Q)$ and $q\in C(\overline{Q})$. For each datum $(f,h_0,h_1)\in \mathcal{K}$, the initial boundary value problem \eqref{eq:ibvp} has a unique weak solution $u\in H^1(0,T; L^2(M))$ that satisfies
\begin{equation}
\label{eq:bound_ibvp_solution}
\|u\|_{H^1(0,T;L^2(M))}\le C\|(f,h_0,h_1)\|_\mathcal{K}.
\end{equation}
Furthermore, the boundary operator 
\begin{equation}
\label{eq:boundary_operator}
\mathcal{B}_{a,q}:\mathcal{K}\to H^{-3}(0,T;H^{-3/2}(V'))\times H^{-2}(M), \quad \mathcal{B}_{a,q}(f,h_0,h_1)=(\iota_{1,1}u|_V, \iota_{1,2}u)
\end{equation}
is bounded, and the partial Cauchy data set $\mathcal{C}_{g,a,q}$, as in \eqref{eq:Cauchy_data}, is the graph of the map $\mathcal{B}_{a,q}$.
\end{prop}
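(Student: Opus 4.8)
The plan is to reduce the inhomogeneous problem \eqref{eq:ibvp} with datum $(f,h_0,h_1)\in\mathcal{K}$ to a homogeneous-wave problem for $\Box_{c,g}$ plus a lower-order perturbation, and then solve the resulting equation by a standard energy/Galerkin argument followed by a fixed-point (or Duhamel/Neumann series) step to absorb the first-order term $a\p_t$ and the zeroth-order term $q$. First I would use Lemma \ref{lem:iota_0_bijection}: given $(f,h_0,h_1)\in\mathcal{K}$, let $w=\iota_0^{-1}(f,h_0,h_1)\in\mathcal{J}$, so that $\Box_{c,g}w=0$, $w|_\Sigma=f$, $w|_{t=0}=h_0$, $\p_t w|_{t=0}=h_1$, and $\|w\|_{H^1(0,T;L^2(M))}=\|(f,h_0,h_1)\|_{\mathcal{K}}$ by the very definition of the norm on $\mathcal{K}$. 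Writing $u=w+\tilde u$, the function $\tilde u$ must solve
\[
\mathcal{L}_{c,g,a,q}\tilde u = -\big(a\,\p_t w+q\,w\big)\quad\text{in }Q,\qquad \tilde u|_{t=0}=\p_t\tilde u|_{t=0}=0,\qquad \tilde u|_\Sigma=0,
\]
where the source $F:=-(a\,\p_t w+q\,w)$ lies in $L^2(Q)$ because $a\in W^{1,\infty}(Q)$, $q\in C(\overline Q)$ and $w\in H^1(0,T;L^2(M))$, with $\|F\|_{L^2(Q)}\le C(\|a\|_{W^{1,\infty}}+\|q\|_{C(\overline Q)})\|w\|_{H^1(0,T;L^2(M))}$. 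So it suffices to prove well-posedness for the homogeneous-boundary, homogeneous-initial-data problem with an $L^2(Q)$ source and to obtain the bound $\|\tilde u\|_{H^1(0,T;L^2(M))}\le C\|F\|_{L^2(Q)}$; combining with the estimate on $w$ gives \eqref{eq:bound_ibvp_solution}.

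For that reduced problem I would invoke the standard theory for the wave operator $\Box_{c,g}$ with a time-dependent first-order perturbation — essentially \cite[Section 2]{Kian_damping} and \cite[Appendix A]{Kian_partial_data}, whose arguments carry over verbatim since the conformal factor $c$ is smooth, strictly positive and time-independent (so the energy $E(t)=\tfrac12\int_M(c^{-1}|\p_t v|^2+|\nabla_g v|^2)\,dV_g$ is the natural one). Concretely: a Galerkin approximation in the spatial eigenbasis of $-\Delta_g$ with Dirichlet conditions produces approximate solutions $\tilde u_m$; the energy identity together with a Grönwall estimate — the $a\p_t\tilde u$ term is controlled by $\|a\|_{L^\infty}E(t)$ and the $q\tilde u$ term by a Poincaré-type bound, while the source contributes $\|F\|_{L^2(Q)}$ — gives a uniform bound $\sup_{t}E(t)^{1/2}+\|\tilde u\|_{H^1(0,T;L^2(M))}\le C\|F\|_{L^2(Q)}$ depending only on $T$, $(M,g)$, $c$, $\|a\|_{L^\infty(Q)}$, $\|q\|_{L^\infty(Q)}$. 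Passing to the weak limit yields existence; uniqueness follows from the same energy estimate applied to a difference of two solutions (with $F=0$), using that such a solution lies in $H_{\Box_{c,g}}(Q)$ so that $\Box_{c,g}\tilde u=-(a\p_t\tilde u+q\tilde u)\in L^2(Q)$ and the trace maps of Lemma \ref{lem:cont_trace} make sense of the vanishing initial/boundary conditions. This gives the unique $u\in H^1(0,T;L^2(M))$ and \eqref{eq:bound_ibvp_solution}.

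It remains to verify the claims about $\mathcal{B}_{a,q}$. Since $u\in H^1(0,T;L^2(M))$ with $\Box_{c,g}u=-(a\p_t u+qu)\in L^2(Q)$, we have $u\in H_{\Box_{c,g}}(Q)$, so by Lemma \ref{lem:cont_trace} the traces $\iota_{1,1}u|_\Sigma\in H^{-3}(0,T;H^{-3/2}(\p M))$ and $\iota_{1,2}u\in H^{-2}(M)$ are well-defined; restricting the first to $V$ lands in $H^{-3}(0,T;H^{-3/2}(V'))$. Boundedness of $\mathcal{B}_{a,q}$ then follows by chaining three continuous maps: $(f,h_0,h_1)\mapsto u$, continuous from $\mathcal{K}$ to $H^1(0,T;L^2(M))$ by \eqref{eq:bound_ibvp_solution}; the embedding $H^1(0,T;L^2(M))\cap\{\Box_{c,g}u\in L^2\}\hookrightarrow H_{\Box_{c,g}}(Q)$, continuous because $\|\Box_{c,g}u\|_{L^2(Q)}\le C(\|a\|_{W^{1,\infty}}+\|q\|_{C(\overline Q)})\|u\|_{H^1(0,T;L^2(M))}$; and the trace map $\iota_1$ of Lemma \ref{lem:cont_trace}. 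Finally, by construction $(f,h_0,h_1,\mathcal{B}_{a,q}(f,h_0,h_1))=(u|_\Sigma,u|_{t=0},\p_t u|_{t=0},\p_\nu u|_V,u|_{t=T})$ for the unique solution $u$, and conversely every element of $\mathcal{C}_{g,a,q}$ arises this way from its own (unique, by the uniqueness part) solution with datum in $\mathcal{K}$; hence $\mathcal{C}_{g,a,q}=\operatorname{graph}(\mathcal{B}_{a,q})$. The one place requiring genuine care — the main obstacle — is the energy estimate: because the first-order coefficient $a$ depends on $t$, one cannot differentiate the equation freely, and the $H^1(0,T;L^2(M))$ bound on $u$ (rather than merely an $L^2$ bound) has to be extracted from the energy $E(t)$ via the equation $c^{-1}\p_t^2 u=\Delta_g u-a\p_t u-qu$ read in the dual space $H^{-1}(0,T;L^2(M))$ or equivalently by controlling $\|\p_t u\|_{L^2(Q)}$ directly through $\sup_t E(t)$, exactly as in \cite[Section 2]{Kian_damping}; this is routine but is the step where the hypotheses $a\in W^{1,\infty}(Q)$, $q\in C(\overline Q)$ are used.
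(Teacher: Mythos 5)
Your proposal is correct and follows essentially the same route as the paper (which simply defers to \cite[Proposition 2.1]{Kian_damping}): set $w=\iota_0^{-1}(f,h_0,h_1)\in\mathcal{J}$, reduce to a zero-data source problem $\mathcal{L}_{c,g,a,q}\tilde u=-(a\p_t w+qw)\in L^2(Q)$ solved by classical hyperbolic theory with the energy bound $\|\tilde u\|_{H^1(0,T;L^2(M))}\le C\|F\|_{L^2(Q)}$, and then chain through the trace maps of Lemma~\ref{lem:cont_trace} to get boundedness of $\mathcal{B}_{a,q}$ and the graph identification. The only cosmetic difference is that the paper (following \cite{Kian_damping}) invokes \cite[Theorem 4.1]{Lasiecka_Lions_Triggiani} for the source problem rather than rebuilding the Galerkin/Gr\"onwall argument, and handles uniqueness in the low-regularity class $H^1(0,T;L^2(M))$ via the density result Lemma~\ref{lem:desity} and the transposition framework of \cite{Kian_partial_data}, a point you should make explicit since the energy identity cannot be applied directly to a bare $H^1(0,T;L^2(M))$ solution.
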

\begin{proof}
The proof is a straightforward modification of the proof of \cite[Proposition 2.1]{Kian_damping}. 
\end{proof}

\section{Carleman Estimates}
\label{sec:Car_est}
Our goal of this section is to prove a boundary Carleman estimate as well as an interior Carleman estimate for the operator $\mathcal{L}_{c,g, a,q}$ conjugated by an exponential weight corresponding to a linear function $\beta t+ x_1$, where $0<\beta<1$ is a constant. We shall utilize the boundary Carleman estimate to control boundary terms over subsets of the boundary $\p Q$ where measurements are not accessible, and the interior Carleman estimates will be used in Section \ref{sec:CGO_solution} to construct the remainder term for both exponentially decaying and growing CGO solutions. 

Let $(M, g)$ be a CTA manifold as defined in Definition \ref{def:CTA_manifolds}, and let $\tilde{g}=e\oplus g_0$. By the conformal properties of the Laplace-Beltrami operator, we have
\begin{equation}
\label{eq:conformal_equivalence}
c^{\frac{n+2}{4}} (-\Delta_g)  (\conf u)= -\Delta_{\tilde{g}}u- \big(c^{\frac{n+2}{4}}\Delta_g(\conf)\big)u,
\end{equation}
see \cite[Section 2]{Ferreira_Kur_Las_Salo}. Also, since $c$ is independent of $t$, we get
\begin{equation}
\label{eq:conf_damp}
c^{\frac{n+2}{4}} a\p_t (\conf u)=ca \p_t u, \quad \text{and} \quad  c^{\frac{n+2}{4}} \p_t^2 (\conf u)=c \p_t^2 u.
\end{equation}
Thus, it follows from \eqref{eq:conformal_equivalence} and \eqref{eq:conf_damp} that for the hyperbolic operator $\mathcal{L}_{c,g,a,q}$, we have
\begin{equation}
\label{eq:equivalence_operator}
c^{\frac{n+2}{4}}\circ \mathcal{L}_{c,g,a,q} \circ \conf = \mathcal{L}_{\tilde{g}, \tilde{a}, \tilde{q}},
\end{equation}
where
\begin{equation}
\label{eq:equiv_coeff}
\tilde{a}=ca, \quad \tilde{q}=c(q-c^{\frac{n-2}{4}}\Delta_g(\conf)).
\end{equation}
Hence, by replacing the metric $g$ and coefficients $a, q$ with $\tilde{g},\tilde{a},\tilde{q}$, respectively, we can assume that the conformal factor $c=1$. In this section we shall make use of this assumption and consider the leading order wave operator $\Box_{e\oplus g_0}=\p_t^2-\Delta_{e\oplus g_0}$. Let us denote $\mathcal{L}_{g,a,q}$ the hyperbolic partial differential operator $\mathcal{L}_{c,g,a,q}$ when $c=1$.

\subsection{Boundary Carleman estimate}

Due to the damping coefficient, we need to use a convexification argument similar to \cite{Kian_damping,Krishnan_Vashisth} to establish the needed boundary Carleman estimate. To elaborate, let us first introduce a new parameter $\varepsilon>0$, which is independent of $h$ and to be determined later. For $0<h< \varepsilon < 1$, we consider the perturbed weight
\begin{equation}
\label{eq:perturbed_weight}
\varphi_{\pm h, \varepsilon}(t,x)= \pm \frac{1}{h}(\beta t+x_1)-\frac{t^2}{2\varepsilon}.
\end{equation}

Our first result in this section can be viewed as an extension of \cite[Theorem 3.1]{Kian_damping} from the Euclidean setting to that of Riemannian manifolds with dependence on a parameter $\beta$. Note that \cite[Theorem 3.1]{Kian_damping} is not directly applicable in our case since the parameter $\beta$ is strictly less than $1$.

\begin{prop}
\label{thm:boundary_carleman}
Let $a, q\in L^\infty(Q, \C)$ and $u\in C^2(\overline{Q})$. 
If $u$ satisfies the conditions
\begin{equation}
\label{eq:condition_u}
u|_\Sigma=u|_{t=0}=\p_tu|_{t=0}=0,
\end{equation}
then for all $0< h \ll \varepsilon\ll 1$ we have 
\begin{equation}
\label{eq:Carleman_est}
\begin{aligned}
&\|e^{-\varphi_{ h, \varepsilon}}h^2\mathcal{L}_{g, a, q} (e^{\varphi_{ h, \varepsilon}}u)\|^2_{L^2(Q)} +(\frac{4}{\beta}-\frac{\beta}{2})h^3\|\nabla_gu(T,\cdot)\|^2_{L^2(M)}+ 3\beta h\|u(T,\cdot)\|^2_{L^2(M)}\\
\ge &\frac{(3\beta^2-1) h^2}{4\varepsilon}\|u\|^2_{L^2(Q)} + \frac{\beta h^3}{4} \|\p_tu(T,\cdot)\|^2_{L^2(M)}+\frac{h^4}{2\varepsilon}(\|\p_tu\|^2_{L^2(Q)}+\|\nabla_gu\|^2_{L^2(Q)})
\\
&+h^3\int_\Sigma \textcolor{black}{\nu_1}|\p_\nu u|^2dS_gdt
\end{aligned}
\end{equation}
and
\begin{equation}
\label{eq:Carleman_est_minus}
\begin{aligned}
&\|e^{-\varphi_{ -h, \varepsilon}}h^2\mathcal{L}_{g, a, q} e^{\varphi_{-h, \varepsilon}}u\|^2_{L^2(Q)} 
+ 2(\beta+1)h^3(\|\nabla_gu(T,\cdot)\|^2_{L^2(M)}
+  \|\p_t u(T,\cdot)\|^2_{L^2(M)})
\\
&\ge \frac{(3\beta^2-1) h^2}{4\varepsilon}\|u\|^2_{L^2(Q)} 
+\frac{h^4}{2\varepsilon}(\|\p_tu\|_{L^2(Q)}
+\|\nabla_gu\|^2_{L^2(Q)})
-h^3\int_\Sigma \nu_1|\p_\nu u|^2dS_gdt,
\end{aligned}
\end{equation}
where $\varphi_{\pm h, \varepsilon}$ is given by \eqref{eq:perturbed_weight}, $\frac{1}{\sqrt{3}}\le\beta < 1$, and $\nu_1:=\n{\nu, \p_{x_1}}_g$. 
\end{prop}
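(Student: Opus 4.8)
The plan is to prove the two estimates \eqref{eq:Carleman_est} and \eqref{eq:Carleman_est_minus} by the standard conjugation-and-integration-by-parts scheme for Carleman estimates, adapted to the linear weight $\pm\frac{1}{h}(\beta t + x_1)$ with the convexifying quadratic correction $-\frac{t^2}{2\varepsilon}$. First I would write the conjugated operator $P_{\pm h,\varepsilon} := e^{-\varphi_{\pm h,\varepsilon}} h^2 \Box_{e\oplus g_0} e^{\varphi_{\pm h,\varepsilon}}$, observing that the first order coefficients $a\p_t + q$ contribute only lower order terms in $h$ (of size $O(h)$ after multiplication by $h^2$) that can be absorbed into the right-hand side at the end; thus the core of the argument concerns the principal wave part $\Box_{e\oplus g_0} = \p_t^2 - \Delta_{e\oplus g_0}$. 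Computing the conjugation explicitly, the quadratic term $-\frac{t^2}{2\varepsilon}$ produces a genuinely positive second order contribution $\frac{1}{\varepsilon}$ from $\p_t^2$ acting on it — this is exactly the convexity gain that yields the $\frac{1}{\varepsilon}$-weighted $L^2$ terms on the right — while the linear part $\pm\frac{1}{h}(\beta t + x_1)$, since $\varphi$ is a limiting Carleman weight for $\Box_{e\oplus g_0}$ once $\beta^2$ is chosen appropriately relative to $1$, contributes the delicate borderline terms. The constraint $\frac{1}{\sqrt 3}\le \beta < 1$ enters precisely through the coefficient $3\beta^2 - 1 \ge 0$ of the $\|u\|_{L^2(Q)}^2$ term, reflecting that the "real part" symbol $|\xi'|^2 - \tau^2 + 1 - \beta^2$ and the gradient-of-weight symbol must be simultaneously controlled.

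Concretely, I would split $P_{\pm h,\varepsilon} = A + B$ into formally self-adjoint and skew-adjoint parts with respect to $L^2(Q)$, expand $\|P_{\pm h,\varepsilon}u\|^2_{L^2(Q)} = \|Au\|^2 + \|Bu\|^2 + (\,[A,B]u, u\,) + \text{boundary terms}$, and carefully track the boundary contributions on $\p Q = \Sigma \cup (\{t=0\}\times M) \cup (\{t=T\}\times M)$. The conditions \eqref{eq:condition_u}, namely $u|_\Sigma = u|_{t=0} = \p_t u|_{t=0} = 0$, kill all boundary terms on $\Sigma$ except those involving $\p_\nu u$ (the Dirichlet condition removes tangential derivatives) and kill everything on the bottom face $\{t=0\}$. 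On $\Sigma$ one is left with the term $h^3\int_\Sigma \nu_1 |\p_\nu u|^2\, dS_g dt$ with sign depending on $\pm$; this is the term that the boundary Carleman estimate is designed to produce, and which will later be used to control the inaccessible part of the lateral boundary where $\nu_1 = \langle \nu, \p_{x_1}\rangle_g$ has the unfavorable sign. On the top face $\{t=T\}$ one collects the energy-type terms $\|\nabla_g u(T,\cdot)\|^2$, $\|\p_t u(T,\cdot)\|^2$, $\|u(T,\cdot)\|^2$ with the explicit constants displayed; these are left on the left-hand side (for $+h$) or allowed on the left (for $-h$) since they are part of the measured Cauchy data at $t=T$. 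The commutator $[A,B]$ is where the positive bulk terms $\frac{h^2}{\varepsilon}\|u\|^2$, $\frac{h^4}{\varepsilon}\|\p_t u\|^2$, $\frac{h^4}{\varepsilon}\|\nabla_g u\|^2$ come from, via the positivity furnished by $\beta^2 > 1/3$ and by the $t^2/(2\varepsilon)$ convexification.

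The main obstacle, I expect, is bookkeeping: getting the \emph{exact} constants $(\frac{4}{\beta} - \frac{\beta}{2})$, $3\beta$, $\frac{3\beta^2-1}{4}$, $\frac{\beta}{4}$, $\frac12$, $2(\beta+1)$ right, and verifying that all the error terms generated by the lower order coefficients $a, q$ and by cross terms between the linear and quadratic parts of the weight are of strictly smaller order in $h$ (after using $0 < h \ll \varepsilon \ll 1$) so that they can be absorbed. In particular one must check that the sign of each top-face ($t = T$) contribution is handled correctly — some must be moved to the left side with a safe constant — and that the borderline terms arising because $\varphi$ is only a \emph{limiting} (not strictly convex) Carleman weight are genuinely rescued by the $\varepsilon$-convexification rather than merely by the choice of $\beta$. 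Once the principal-part estimate is established with room to spare in the $\varepsilon$-weighted terms, absorbing $h^2(a\p_t + q)(e^\varphi u)$ is routine since $\|h^2 a \p_t(e^{\varphi_{\pm h,\varepsilon}} u)\|_{L^2(Q)} \lesssim h\|u\|_{L^2(Q)} + h^2\|\p_t u\|_{L^2(Q)}$ and similarly for the potential, both dominated by the right-hand side for $h$ small relative to $\varepsilon$. I would prove \eqref{eq:Carleman_est} first and then obtain \eqref{eq:Carleman_est_minus} by the same computation with the sign of the linear weight reversed, noting that this flips the sign of the $\Sigma$-boundary term and changes which top-face terms need to be retained.
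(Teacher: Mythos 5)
Your plan coincides with the paper's in all essentials: conjugate by the convexified weight $\varphi_{\pm h,\varepsilon}$, split the conjugated operator into an even-derivative (formally self-adjoint) part, an odd-derivative (formally skew-adjoint) part, and a lower-order part carrying $a,q$; derive positivity from the cross pairing by integration by parts, collecting the $\Sigma$ and $\{t=T\}$ boundary terms with the $\{t=0\}$ terms killed by \eqref{eq:condition_u}; and absorb the lower-order contribution by taking $\varepsilon$ small relative to $\|a\|_{L^\infty}, \|q\|_{L^\infty}$ and $h \ll \varepsilon$. The only cosmetic difference is that the paper avoids the full H\"ormander expansion $\|Au\|^2 + \|Bu\|^2 + ([A,B]u,u)$, instead discarding the squares via $\tfrac12\|P_1u+P_2u\|^2 \ge \Re(P_1u,P_2u)_{L^2(Q)}$ and computing the cross term directly by repeated integration by parts — the same bookkeeping, organized slightly more elementarily.
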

\begin{proof}
We shall only provide a detailed proof for estimate \eqref{eq:Carleman_est}. The derivation of  \eqref{eq:Carleman_est_minus} is analogous and therefore omitted. 
To proceed, we omit the subscripts $h, \varepsilon$ in $\varphi_{h, \varepsilon}$ to simplify the notation. 

\textit{Step 1: The conjugated operator}  $e^{-\varphi}h^2\mathcal{L}_{g, a, q}e^{\varphi}u$. By direct computations, we have
\begin{equation}
\label{eq:conju_op_Car}
\begin{aligned}
e^{-\varphi}h^2\mathcal{L}_{g, a, q}e^{\varphi}u = 
&h^2\left[\p_t^2u+2\p_t\varphi\p_tu +u\p_t^2 \varphi+u(\p_t\varphi)^2 - (\Delta_g u+ 2\n{\nabla_g \varphi, \nabla_gu}_g +u\Delta_g\varphi\right.
\\
&\left. + u|\nabla_g\varphi|^2)+ a\p_t u+ au\p_t\varphi+qu\right]
\\
:=&P_1u+P_2u+P_3u,
\end{aligned}
\end{equation}
where 
\begin{equation}
\label{eq:P_operators}
\begin{aligned}
&P_1u=h^2(\Box_gu+(\p_t\varphi)^2u - |\nabla_g\varphi|^2u+(\Box_g \varphi)u), \quad P_2u=h^2(2\p_t\varphi \p_tu  -2\n{\nabla_g \varphi, \nabla_g u}_g),
\\
&P_3u=h^2(a\p_tu +(a\p_t\varphi)u +qu).
\end{aligned}
\end{equation}
Thus, $P_1u$ includes the even order derivatives of $u$, $P_2u$ contains the odd order derivatives of $u$, and $P_3u$ has all the terms involving $a$ or
$q$. Due to \eqref{eq:conju_op_Car}, \eqref{eq:P_operators}, and the triangle inequality, we have
\begin{equation}
\label{eq:norm_triangle}
\begin{aligned}
\|e^{-\varphi}h^2\mathcal{L}_{g, a,q}e^{\varphi }u\|_{L^2(Q)}^2 
\ge \frac{1}{2}\|P_1u+P_2u\|^2_{L^2(Q)}-\|P_3u\|^2_{L^2(Q)}.
\end{aligned}
\end{equation}
In the last two steps of this proof we shall bound $\|P_1u+P_2u\|^2_{L^2(Q)}$ from below and $\|P_3u\|^2_{L^2(Q)}$ from above. We first use  Cauchy's inequality to convert the first term on the right-hand side of \eqref{eq:norm_triangle} into a product. The choice of the operators $P_1$ and $P_2$ simplifies the subsequent computations, which involve several integration by parts. The estimate for the second term in the right-hand side of \eqref{eq:norm_triangle} is short and is based on the boundedness of $a$ and $q$. Both estimations rely on the choice of the perturbed weight given in \eqref{eq:perturbed_weight}.
Finally, the inequality \eqref{eq:Carleman_est} is obtained by combining these two estimates. 

\textit{Step 2: A lower bound of} $\|P_1u+P_2u\|^2_{L^2(Q)}$. To start, we have by Cauchy's inequality that
\begin{align*}
\frac{1}{2}\|P_1u+P_2u\|^2_{L^2(Q)}\ge \int_Q \Re (P_1u\overline{P_2u})dV_gdt.
\end{align*}
Since $g(x_1,x')=(dx_1)^2+g_0(x')$, we get from direct computations that
\begin{equation}
\label{eq:derivatives_of_varphi}
\p_t \varphi=\frac{1}{h}\beta -\frac{1}{\varepsilon} t, \quad \p_t^2 \varphi= -\frac{1}{\varepsilon}, \quad \p_{x_1} \varphi= \frac{1}{h}, \quad \n{\nabla_g \varphi, \nabla_g u}_g=\frac{1}{h} \p_{x_1}u,
\end{equation}
which yield
\begin{equation}
\label{eq: Car_est_comp}
\begin{aligned}
&\int_Q \Re (P_1u\overline{P_2u})dV_gdt=
\Re \int_Q 
2h^4 \left[\p_t^2u(\frac{1}{h}\beta-\frac{1}{\varepsilon} t)\overline{\p_t u}- \frac{1}{h} \p_t^2u\overline{ \p_{x_1}u}- \Delta_g u(\frac{1}{h}\beta-\frac{1}{\varepsilon} t)\overline{\p_t u}\right.
\\
& 
\left. + \frac{1}{h} \Delta_g u\overline{\p_{x_1}u}-\big(\frac{1}{h^2}(1-\beta^2) +\frac{1}{\varepsilon} +\frac{2\beta}{h\varepsilon}t-\frac{1}{\varepsilon^2}t^2\big)u \big((\frac{1}{h}\beta-\frac{1}{\varepsilon}t)\overline{\p_t u}  -\frac{1}{h}\overline{\p_{x_1}u}\big)\right]
dV_gdt.
\end{aligned}
\end{equation}

Let us proceed to estimate each term on the right-hand side of \eqref{eq: Car_est_comp}. For the first term, we integrate by parts and use the assumption $\p_t u|_{t=0}=0$ to deduce
\begin{equation}
\label{eq:Car_est_term1}
\begin{aligned}
2h^4\Re\int_Q (\frac{1}{h}\beta-\frac{1}{\varepsilon} t ) \p_t^2u \overline{\p_t u} dV_gdt 
&= 
h^4(\frac{1}{h}\beta-\frac{1}{\varepsilon} T)\|\p_tu(T,\cdot)\|^2_{L^2(M)}+\frac{h^4}{\varepsilon}\|\p_tu\|^2_{L^2(Q)}.
\end{aligned}
\end{equation}

Turning attention to the second term, we note that the Lie bracket $[\p_t,\p_{x_1}]$ vanishes. Thus, we integrate by parts and apply $\p_t u|_{t=0}=0$ to obtain
\[
2h^4\Re\int_Q(-\frac{1}{h}) \p_t^2u \overline{\p_{x_1}u} dV_gdt = -2h^3\Re\int_M \p_tu(T,x)\overline{\p_{x_1}u(T,x)}dV_g+h^3 \int_Q \p_{x_1}|\p_tu|^2dV_gdt.
\]
Since the vector field $\p_{x_1}$ is divergence free, we get from the assumption $u|_\Sigma=0$ and integration by parts that the last term in the equation above vanishes. Hence, we have the following equality for the second term:
\begin{equation}
\label{eq:Car_est_term2}
2h^4\Re\int_Q(-\frac{1}{h}) \p_t^2u \overline{\p_{x_1}u} dV_gdt = -2h^3\Re\int_M \p_tu(T,x)\overline{\p_{x_1}u(T,x)}dV_g.
\end{equation}

Before estimating the third term, we recall that in local coordinates $(t,(x_j)_{j=1}^n)$ of $Q$ we have $[\p_t,\p_{x_j}]=0$ for every $j=1,\ldots,n$, and $\nabla_g u(t,x)=g^{ik}(x)\p_{x^k}u(t,x)$. Furthermore, since the metric $g$ is time-independent, we have 
\begin{align*}
\p_t|\nabla_g u|^2
= 2\langle \nabla_g \p_t u, \nabla_g u \rangle_g.
\end{align*}
Thus, by Green's identities and $u|_\Sigma=0$, we obtain
\[
-2h^4\Re\int_Q  \Delta_gu(\frac{1}{h}\beta-\frac{1}{\varepsilon} t ) \overline{\p_t u} dV_gdt=h^4\int_Q (\frac{1}{h}\beta-\frac{1}{\varepsilon} t ) \p_t|\nabla_g u|^2dV_gdt.
\]
Since $u|_{t=0}=0$, it follows immediately that $\nabla_g u(0,\cdot)=0$. Then we integrate by parts to get
\[
\int_Q (\frac{1}{h}\beta-\frac{1}{\varepsilon} t ) \p_t|\nabla_g u|^2dV_gdt=\int_M (\frac{1}{h}\beta-\frac{1}{\varepsilon} T ) |\nabla_g u(T, \cdot)|^2dV_g+\int_Q \frac{1}{\varepsilon}|\nabla_gu|^2dV_gdt.
\]
Therefore, we have verified that the third term on the right-hand side of \eqref{eq: Car_est_comp} satisfies
\begin{equation}
\label{eq:Car_est_term3}
-2h^4\Re\int_Q \Delta_gu(\frac{1}{h}\beta-\frac{1}{\varepsilon} t )\p_t u dV_gdt = h^4 (\frac{1}{h}\beta-\frac{1}{\varepsilon} T)\| \nabla_gu(T,\cdot) \|^2_{L^2(M)}+ \frac{h^4}{\varepsilon}\|\nabla_gu\|^2_{L^2(Q)}.
\end{equation}

We next follow the proof of  \cite[Lemma 4.2]{Kian_Oksanen} to estimate the fourth term. Since the metric $g$ is $x_1$-independent, it follows from the Leibniz rule and the local representation of the divergence operator \cite[Proposition 2.46]{lee2018introduction} that
\begin{align*}
2\overline{ \p_{x_1}u}\Delta_g u
&=2\div_g(\overline{ \p_{x_1}u}\nabla_g u)- \div_g(|\nabla_g u|^2_g \p_{x_1}).
\end{align*}
Thus, an application of the divergence theorem yields
\begin{align*}
2h^4\Re\int_Q \frac{1}{h} \overline{\p_{x_1}u}  \Delta_gu dV_gdt 
&= h^3\Re\int_\Sigma 2\p_\nu u\overline{ \p_{x_1}u}- |\nabla_g u|^2\n{\nu, \p_{x_1}}_gdS_gdt.
\end{align*}
Since $u|_\Sigma=0$, we see that $\nabla_g u|_\Sigma=(\p_\nu u)\nu$ and $\p_{x_1}u|_\Sigma=\p_\nu u\n{\nu, \p_{x_1}}_g:=\p_\nu u \nu_1$. Therefore, we have 
\begin{equation}
\label{eq:Car_est_term4}
2h^4\Re\int_Q \frac{1}{h} \Delta_gu \overline{\p_{x_1}u}  dV_gdt = h^3 \int_\Sigma {\nu_1} |\p_\nu u|^2 dS_gdt.
\end{equation}
%

We now turn our attention to the last term on the right-hand side of \eqref{eq: Car_est_comp}. To that end, we 
integrate by parts, and use $\div_g(\p_{x_1})=0$ and $u|_\Sigma = 0$ to write
\[
2\Re\int_M u(t,\cdot)\overline{\p_{x_1}u(t,\cdot)} dV_g=\int_M \p_{x_1}|u(t,\cdot)|^2 dV_g=\int_{\p M}  |u(t,\cdot)|^2 \nu_1 dS_g=0.
\]
Hence, by utilizing the condition $u|_{t=0}=0$, the last term on the right-hand side of  \eqref{eq: Car_est_comp} can be written as 
\begin{align*}
&2h^4\Re\int_Q -\big(\frac{1}{h^2}(1-\beta^2) +\frac{1}{\varepsilon} +\frac{2\beta}{h\varepsilon }t-\frac{1}{\varepsilon^2}t^2\big) u \big((\frac{1}{h}\beta-\frac{1}{\varepsilon} t )\overline{\p_t u}  -\frac{1}{h}\overline{\p_{x_1}u}\big) dV_gdt
\\
=&-h^4\int_Q  \big(\frac{1}{h^2}(1-\beta^2) +\frac{1}{\varepsilon} +\frac{2\beta}{h\varepsilon }t-\frac{1}{\varepsilon^2}t^2\big) (\frac{1}{h}\beta-\frac{1}{\varepsilon} t) \p_t|u|^2 dV_gdt
\\
=&-h^4 \big(\frac{1-\beta^2}{h^2} + \frac{1}{\varepsilon} + \frac{2\beta}{\varepsilon h}T-\frac{1}{\varepsilon^2}T^2\big)  \big(\frac{1}{h}\beta-\frac{1}{\varepsilon} T\big) \|u(T,\cdot)\|_{L^2(M)}^2
\\
&+h^4 \int_Q \big(\frac{3\beta^2-1}{\varepsilon h^2} -\frac{1}{\varepsilon^2} -\frac{6\beta}{\varepsilon^2 h}t+\frac{3}{\varepsilon^3}t^2\big) |u|^2dV_gdt.
\end{align*}

We now choose the numbers $\varepsilon,h>0$ such that
\begin{equation}
\label{eq:restriction_epsilon}
0<\varepsilon < 3T^2,
\quad 
\frac{1}{\sqrt{3}}<\beta<1,
\quad \text{ and } \quad 
\frac{1}{h}>\max\bigg\{\frac{2T}{\varepsilon \beta}, \frac{12\beta T}{\varepsilon(3\beta^2-1)}, \frac{2\beta T}{\varepsilon}, \frac{1}{\varepsilon }\bigg\}.
\end{equation}
%
These choices yield $h<\varepsilon$,
\[
\frac{3\beta^2-1}{\varepsilon h^2} -\frac{1}{\varepsilon^2} -\frac{6\beta}{\varepsilon^2h}t+\frac{3}{\varepsilon^3}t^2 \ge \frac{3\beta^2-1}{2\varepsilon h^2},
\]
and 
\[
0<\left(\frac{1-\beta^2}{h^2} + \frac{1}{\varepsilon} + \frac{2\beta}{\varepsilon h}T-\frac{1}{\varepsilon^2}T^2\right) \left(\frac{1}{h}\beta-\frac{1}{\varepsilon} T \right) 
\le 
\frac{3\beta}{h^3}.
\]
The choices of $h$ and $\varepsilon$ in \eqref{eq:restriction_epsilon} allow
the term $\frac{1}{h^2}$ to absorb the lower order terms when $0<h\ll \varepsilon \ll 1$.
Therefore, we get from these choices of $\varepsilon$ and $h$ that
\begin{equation}
\label{eq:Car_est_term5}
\begin{aligned}
&2h^4\Re\int_Q -\big(\frac{1}{h^2}(1-\beta^2) +\frac{1}{\varepsilon} +\frac{2\beta}{h\varepsilon }t-\frac{1}{\varepsilon^2}t^2\big) u ((\frac{1}{h}\beta-\frac{1}{\varepsilon} t )\p_t u  -\frac{1}{h}\p_{x_1}u) dV_gdt
\\
\ge& -3\beta h\|u(T,\cdot)\|_{L^2(M)}^2+ \frac{(3\beta^2-1)h^2}{2\varepsilon}\|u\|^2_{L^2(Q)}.
\end{aligned}
\end{equation}

By combining estimates \eqref{eq:Car_est_term1}--\eqref{eq:Car_est_term4} and \eqref{eq:Car_est_term5}, we obtain
\begin{align*}
\frac{1}{2}\|P_1u+P_2u\|^2
\ge &h^4\big(\frac{1}{h}\beta-\frac{1}{\varepsilon} T\big)\|\p_tu(T,\cdot)\|^2_{L^2(M)}+\frac{h^4}{\varepsilon}(\|\p_tu\|^2_{L^2(Q)}+\|\nabla_gu\|^2_{L^2(Q)})
\\
&
-2h^3\Re\int_M \p_tu(T,x)\overline{\p_{x_1}u}(T,x)dV_g+h^4 \big(\frac{1}{h}\beta-\frac{1}{\varepsilon} T \big)\| \nabla_gu(T,\cdot) \|^2_{L^2(M)}
\\
&
+h^3 \int_\Sigma \nu_1|\p_\nu u|^2 dS_gdt + \frac{(3\beta^2-1) h^2}{2\varepsilon}\|u\|^2_{L^2(Q)}-3\beta h\|u(T,\cdot)\|_{L^2(M)}^2.
\end{align*}
We sharpen the estimate above
%
by implementing $\frac{1}{h}> \frac{2T}{\varepsilon\beta}$ from \eqref{eq:restriction_epsilon} and utilizing the following inequality,
\[
\Re\int_M \p_tu(T,x)\overline{\p_{x_1}u}(T,x)dV_g \le \frac{\beta}{8}\|\p_tu(T,\cdot)\|_{L^2(M)}^2 + \frac{8}{\beta}\|\nabla_g u(T,\cdot)\|_{L^2(M)}^2,
\]
to obtain 
\begin{equation}
\label{eq:Car_est_L2}
\begin{aligned}
\frac{1}{2}\|P_1u+P_2u\|^2 \ge &\frac{1}{4} \beta h^3\|\p_tu(T,\cdot)\|^2_{L^2(M)}+\frac{h^4}{\varepsilon}(\|\p_tu\|^2_{L^2(Q)}+\|\nabla_gu\|^2_{L^2(Q)})
\\
&-h^3(\frac{16}{\beta}-\frac{\beta}{2})\|\nabla_g u(T,\cdot)\|_{L^2(M)}^2+ \frac{(3\beta^2-1) h^2}{2\varepsilon}\|u\|^2_{L^2(Q)}
\\
&+h^3 \int_\Sigma \nu_1|\p_\nu u|^2 dS_gdt-3\beta h\|u(T,\cdot)\|_{L^2(M)}^2.
\end{aligned}
\end{equation}

\textit{Step 3: An upper bound of} $\|P_3u\|_{L^2(Q)}$.
We deduce from \eqref{eq:P_operators}, \eqref{eq:derivatives_of_varphi}, as well as the triangle inequality that
\begin{equation}
\label{eq:Car_P3}
\begin{aligned}
\|P_3u\|^2_{L^2(Q)}
\le  3h^4\big( \|a\|^2_{L^\infty(Q)}\|\p_t u\|^2_{L^2(Q)} +\big( \frac{\beta^2}{h^2}\|a\|^2_{L^\infty(Q)} +\|q\|^2_{L^\infty(Q)} \big)\|u\|^2_{L^2(Q)} \big).
\end{aligned}
\end{equation}
Here we have used the inequality $(x+y+z)^2\le 3(x^2+y^2+z^2)$ for $x,y,z \in \R$.

In addition to the choice $0<\varepsilon<3T^2$ made in \eqref{eq:restriction_epsilon}, we will further require that 
\[
\frac{1}{2\varepsilon} \ge 3\|a\|^2_{L^\infty(Q)} \quad \text{ and } \quad	\frac{3\beta^2-1}{4\varepsilon}\ge 3( \beta^2\|a\|^2_{L^\infty(Q)}+\|q\|^2_{L^\infty(Q)}).
\]

After combining estimates \eqref{eq:norm_triangle}, \eqref{eq:Car_est_L2}, and \eqref{eq:Car_P3}, we obtain
the claimed estimate \eqref{eq:Carleman_est}.
This completes the proof of Proposition \ref{thm:boundary_carleman}.
\end{proof}


We are now ready to state and prove the boundary Carleman estimate.
\begin{prop}
\label{prop:Car_est}
Let $a, q\in L^\infty(Q, \C)$ and $v\in C^2(\overline{Q})$. If $v$ satisfies
\begin{equation}
\label{eq:condition_v}
v|_\Sigma=v|_{t=0}=\p_tv|_{t=0}=0,
\end{equation}
then for all $0<h \ll \varepsilon \ll 1$, we have
\begin{equation}
\label{eq:Car_est_conjugated}
\begin{aligned}
&\|e^{-\frac{1}{h}(\beta t +x_1)}h^2\mathcal{L}_{g, a,q}v\|_{L^2(Q)} +\cO(h^{3/2})\|e^{-\frac{1}{h}(\beta T+x_1)}\nabla_gv(T,\cdot)\|_{L^2(M)}\\
&+\cO(h^{1/2})\|e^{-\frac{1}{h}(\beta T+x_1)}v(T,\cdot)\|_{L^2(M)}+\cO(h^{3/2})\bigg(\int_{\Sigma_{-}} |\p_\nu \varphi||e^{-\frac{1}{h}(\beta t +x_1)}\p_\nu v|^2dS_gdt\bigg)^{1/2}
\\
&\ge \cO(h)\|e^{-\frac{1}{h}(\beta t +x_1)}v\|_{L^2(Q)} + \cO(h^{3/2})\|e^{-\frac{1}{h}(\beta T+x_1)}\p_tv(T,\cdot)\|_{L^2(M)}+\cO(h^2)(\|e^{-\frac{1}{h}(\beta t +x_1)}\p_tv\|_{L^2(Q)}
\\
&+\|e^{-\frac{1}{h}(\beta t +x_1)}\nabla_gv\|_{L^2(Q)}) +\cO(h^{3/2})\bigg(\int_{\Sigma_{+}}\p_\nu \varphi |e^{-\frac{1}{h}(\beta t +x_1)}\p_\nu v|^2dS_gdt\bigg)^{1/2}.
\end{aligned}
\end{equation}
Here $\varphi(x) = x_1$, $\Sigma_\pm=(0, T)\times \p M_\pm^{\text{int}}$, and $\p M_\pm = \{x \in \p M: \pm \p_\nu\varphi(x) \ge 0\}$. 
\end{prop}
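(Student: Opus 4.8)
The plan is to obtain \eqref{eq:Car_est_conjugated} from the squared estimate \eqref{eq:Carleman_est} by undoing the convexification. Write $\varphi_{h,\varepsilon}(t,x)=\frac1h(\beta t+x_1)-\frac{t^2}{2\varepsilon}$ for the perturbed weight of \eqref{eq:perturbed_weight} with the ``$+h$'' sign, and set $u:=e^{-\varphi_{h,\varepsilon}}v$. Since $\varphi_{h,\varepsilon}\in C^\infty(\overline Q)$ and $v\in C^2(\overline Q)$ obeys \eqref{eq:condition_v}, the function $u$ belongs to $C^2(\overline Q)$ and satisfies \eqref{eq:condition_u}: indeed $u|_\Sigma=u|_{t=0}=0$ because $v$ vanishes there, and $\partial_t u=e^{-\varphi_{h,\varepsilon}}(\partial_t v-(\partial_t\varphi_{h,\varepsilon})v)$ vanishes at $t=0$ because both $v$ and $\partial_t v$ do. Thus Proposition \ref{thm:boundary_carleman} applies to $u$, with $e^{-\varphi_{h,\varepsilon}}h^2\mathcal{L}_{g,a,q}(e^{\varphi_{h,\varepsilon}}u)=e^{-\varphi_{h,\varepsilon}}h^2\mathcal{L}_{g,a,q}v$. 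The whole argument is then bookkeeping around the factorization $e^{-\varphi_{h,\varepsilon}}=e^{t^2/(2\varepsilon)}\,e^{-\frac1h(\beta t+x_1)}$, where the factor $e^{t^2/(2\varepsilon)}$ is a fixed smooth function of $t$ with $1\le e^{t^2/(2\varepsilon)}\le e^{T^2/(2\varepsilon)}$ on $[0,T]$, together with the identities $\nabla_g u=e^{-\varphi_{h,\varepsilon}}(\nabla_g v-\frac1h v\,\partial_{x_1})$ (with $|\partial_{x_1}|_g=1$), $\partial_\nu u|_\Sigma=e^{-\varphi_{h,\varepsilon}}\partial_\nu v|_\Sigma$ (using $v|_\Sigma=0$), and $\nu_1=\langle\nu,\partial_{x_1}\rangle_g=\partial_\nu\varphi$ for $\varphi(x)=x_1$, and the uniform bound $|\partial_t\varphi_{h,\varepsilon}|\le C_\varepsilon/h$ on $Q$. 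Below, $C$ and $C_\varepsilon$ denote positive constants depending only on the fixed quantities $\varepsilon,T,\beta,\|a\|_{L^\infty(Q)},\|q\|_{L^\infty(Q)}$ and the geometry, not on $h$, and we keep the running assumption $0<h\ll\varepsilon\ll1$.

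Splitting $\int_\Sigma\nu_1|\partial_\nu u|^2\,dS_gdt=\int_{\Sigma_+}\partial_\nu\varphi\,|\partial_\nu u|^2\,dS_gdt-\int_{\Sigma_-}|\partial_\nu\varphi|\,|\partial_\nu u|^2\,dS_gdt$ and moving the $\Sigma_-$ integral to the left, \eqref{eq:Carleman_est} takes the form $\mathcal{A}\ge\mathcal{B}+h^3\int_{\Sigma_+}\partial_\nu\varphi\,|\partial_\nu u|^2\,dS_gdt$, where $\mathcal{A}$ is the sum of $\|e^{-\varphi_{h,\varepsilon}}h^2\mathcal{L}_{g,a,q}v\|_{L^2(Q)}^2$, $C h^3\|\nabla_g u(T,\cdot)\|_{L^2(M)}^2$, $C h\|u(T,\cdot)\|_{L^2(M)}^2$ and $h^3\int_{\Sigma_-}|\partial_\nu\varphi|\,|\partial_\nu u|^2\,dS_gdt$, and $\mathcal{B}$ is the sum of $\frac{(3\beta^2-1)h^2}{4\varepsilon}\|u\|_{L^2(Q)}^2$, $\frac{\beta h^3}{4}\|\partial_t u(T,\cdot)\|_{L^2(M)}^2$ and $\frac{h^4}{2\varepsilon}(\|\partial_t u\|_{L^2(Q)}^2+\|\nabla_g u\|_{L^2(Q)}^2)$. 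Let $L$ be the sum of the four terms on the left of \eqref{eq:Car_est_conjugated}. Expanding $\nabla_g u(T,\cdot)$ and $\partial_\nu u$ and using the uniform bounds above (together with $h^3\le h$), a term-by-term Cauchy--Schwarz estimate gives $\mathcal{A}\le C_\varepsilon L^2$; the only summand not mapping directly to a corresponding term of $L$ is the $\frac1h v(T,\cdot)\partial_{x_1}$ part of $\nabla_g u(T,\cdot)$, which contributes an extra $C_\varepsilon h\|e^{-\frac1h(\beta T+x_1)}v(T,\cdot)\|_{L^2(M)}^2$, absorbed by the square of the $\cO(h^{1/2})\|e^{-\frac1h(\beta T+x_1)}v(T,\cdot)\|$ term of $L$. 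Conversely, using $e^{-\frac1h(\beta t+x_1)}\partial_t v=e^{-t^2/(2\varepsilon)}(\partial_t u+(\partial_t\varphi_{h,\varepsilon})u)$, $e^{-\frac1h(\beta t+x_1)}\nabla_g v=e^{-t^2/(2\varepsilon)}(\nabla_g u+\frac1h u\,\partial_{x_1})$, the analogous identity at $t=T$, $e^{-\frac1h(\beta t+x_1)}\partial_\nu v|_\Sigma=e^{-t^2/(2\varepsilon)}\partial_\nu u|_\Sigma$, and $\|e^{-\frac1h(\beta t+x_1)}v\|_{L^2(Q)}\le\|u\|_{L^2(Q)}$, one checks that the sum $R$ of the terms on the right of \eqref{eq:Car_est_conjugated} satisfies
\[
R^2\ \le\ C_\varepsilon\big(\mathcal{B}+h^3\int_{\Sigma_+}\partial_\nu\varphi\,|\partial_\nu u|^2\,dS_gdt\big)+C_\varepsilon h\|u(T,\cdot)\|_{L^2(M)}^2,
\]
the last summand arising only from $h^3\|e^{-\frac1h(\beta T+x_1)}\partial_t v(T,\cdot)\|_{L^2(M)}^2$ when $\partial_t u(T,\cdot)$ is replaced by $\partial_t v(T,\cdot)$.

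It then remains to combine these estimates with $\mathcal{A}\ge\mathcal{B}+h^3\int_{\Sigma_+}\partial_\nu\varphi\,|\partial_\nu u|^2\,dS_gdt$ to get $R^2\le C_\varepsilon\mathcal{A}+C_\varepsilon h\|u(T,\cdot)\|_{L^2(M)}^2\le C_\varepsilon L^2$, where the last inequality uses $\mathcal{A}\le C_\varepsilon L^2$ and $h\|u(T,\cdot)\|_{L^2(M)}^2\le C_\varepsilon h\|e^{-\frac1h(\beta T+x_1)}v(T,\cdot)\|_{L^2(M)}^2\le C_\varepsilon L^2$. Taking square roots, using $\sqrt{a+b}\le\sqrt a+\sqrt b\le\sqrt2\sqrt{a+b}$, and absorbing the constant $\sqrt{C_\varepsilon}$ into the $\cO(\cdot)$'s on the left of \eqref{eq:Car_est_conjugated} while choosing those on the right sufficiently small, one arrives at \eqref{eq:Car_est_conjugated}. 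The one genuinely delicate point is the placement of the $\cO(h^{1/2})\|e^{-\frac1h(\beta T+x_1)}v(T,\cdot)\|$ term on the \emph{left}-hand side of \eqref{eq:Car_est_conjugated}: it is there precisely so that it can absorb the zeroth-order-in-$v$ remainders produced when $\nabla_g u(T,\cdot)$ and $\partial_t u(T,\cdot)$ are rewritten in terms of $v$. Everything else reduces to the Cauchy--Schwarz inequality and the uniform bounds $1\le e^{t^2/(2\varepsilon)}\le e^{T^2/(2\varepsilon)}$ and $|\partial_t\varphi_{h,\varepsilon}|\le C_\varepsilon/h$ on $[0,T]$.
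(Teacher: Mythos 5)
Your proposal is correct and takes essentially the same approach as the paper: both deduce \eqref{eq:Car_est_conjugated} from \eqref{eq:Carleman_est} by substituting $u = e^{-\varphi_{h,\varepsilon}}v$ and then fixing $\varepsilon$ small, with the paper delegating the subsequent bookkeeping to the corresponding result in \cite{Kian_damping}. You carry out that bookkeeping explicitly, correctly identifying the one delicate point — that the $\cO(h^{1/2})\|e^{-\frac1h(\beta T+x_1)}v(T,\cdot)\|_{L^2(M)}$ term must sit on the left so it can absorb the zeroth-order remainders created when $\nabla_g u(T,\cdot)$ and $\partial_t u(T,\cdot)$ are re-expressed in terms of $v$.
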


\begin{proof}
By following the steps in the proof of \cite[Theorem 3.1]{Kian_damping}, we deduce the claimed estimate \eqref{eq:Car_est_conjugated} from estimate \eqref{eq:Carleman_est} by substituting $u = \exp(-\varphi_{h, \varepsilon}) v $, where $\varphi_{h, \varepsilon}$ is given by \eqref{eq:perturbed_weight}, as well as choosing $\varepsilon>0$ small but fixed.
\end{proof}

\begin{rem}\label{rem:density_argument}
The Carleman estimate \eqref{eq:Car_est_conjugated} can be extended to any function
\\
$v\in \mathcal{H}:= C^1([0, T]; L^2(M))\cap C([0, T]; H^1(M))$ satisfying \eqref{eq:condition_v} and $\Box_{e\oplus g_0}v\in L^2(Q)$. Indeed, we may approximate $f:=\Box_{e\oplus g_0}v\in L^2(Q)$ by a sequence $f_j\in C_0^\infty(Q)$ such that $f_j\to f$ in $L^2(Q)$ as $j\to \infty$. If $v_j$ solves $\Box_{e\oplus g_0}v_j=f_j$ and satisfies $v_j|_\Sigma=v_j|_{t=0}=\p_tv_j|_{t=0}=0
$, then $v_j\in C^\infty(\overline{Q})$ by \cite[Remark 2.10]{Lasiecka_Lions_Triggiani}. In particular, the boundary Carleman estimate \eqref{eq:Car_est_conjugated} holds for $v_j$. 

Furthermore, we have 
\[
\|v_j-v\|_{\mathcal{H}}+ \|\p_\nu v_j -\p_\nu v\|_{L^2(\Sigma)}  \le C \|f_j - f\|_{L^2(Q)}\to 0, \quad j\to 0,
\]
by the energy estimate \cite[Theorem 2.1]{Lasiecka_Lions_Triggiani} together with \cite[Remark 2.2]{Lasiecka_Lions_Triggiani}. Thus, estimate \eqref{eq:Car_est_conjugated} extends to $v$.
\end{rem}

\subsection{Semiclassical pseudodifferential operators}

In this subsection we recall some fundamental concepts of semiclassical pseudodifferential calculus on closed Riemannian manifolds by following the expositions of \cite{Sjo_2009} and \cite[Chapter 14]{Zworski}. Let $(N,\tilde g)$ be a smooth compact $n$-dimensional Riemannian manifold without boundary.
For each $m \in \R$, the Kohn-Nirenberg symbol class $S^m(T^\ast N)$ consists of smooth functions on the cotangent bundle $T^\ast N$, which in local coordinates of $N$ are given by 
\begin{equation}
\label{eq:symbol_class}
S^m_{1,0}(T^\ast N)=S^m(T^\ast N)=\{a(x, \xi)\in C^\infty (T^\ast N): |\p_{x}^\alpha \p_{\xi}^\beta a(x, \xi)|\le C_{\alpha \beta}\n{\xi}^{m-|\beta|}\},
\end{equation}
where $\n{\xi}=(1+ |\xi|^2)^{1/2}$. For a parameter-dependent symbol $a(x,\xi;h)$, we say that $a \in S^m(T^\ast N)$ if the estimate in \eqref{eq:symbol_class} holds uniformly for every $h\in (0,h_0)$ and for some $h_0>0$. A linear operator $B\colon C^\infty(N) \to C^\infty(N)$ is called \textit{negligible} if its Schwartz kernel $K_B\in C^{\infty}(N\times N)$ locally satisfies the estimate
$
\p_{x}^\alpha \p_{y}^\beta  K_B (x,y)=\cO (h^\infty)$ for all  $\alpha,\beta \in \N^n.
$

A linear map $A \colon C^\infty(N) \to C^\infty(N)$ is a semiclassical pseudodifferential operator of order $m \in \R$ if there exists $a \in S^m(T^\ast N)$ such that in local coordinates, the operator $A$ is given by the standard $h$-quantization
\begin{equation}
\label{eq:h_quantization}
Au(x)=\frac{1}{(2\pi h)^{n}}\int \int e^{\frac{i}{h}(x-y)\cdot \xi}a(x,\xi; h)u(y) dyd\xi+Bu(x),
\end{equation}
where the operator $B$ is negligible, and the operator $\psi A \varphi$ is negligible for each $\varphi, \psi \in C^\infty(N)$ with disjoint supports. We denote $\Psi^{m}(N)$ the set of semiclassical pseudodifferential operators of order $m$ on $(N,g)$.

We recall that the correspondence from an operator to a symbol is not globally well-defined, but there exists a bijective map between the following equivalence classes
\[
\Psi^{m}(N)/\Psi^{m-1}(N) \to S^{m}(T^\ast N)/S^{m-1}(T^\ast N).
\]
The image $\sigma_A(x,\xi;h)$ of $A\in \Psi^{m}(N)$ under this map is called the \textit{principal symbol} of $A$. These definitions allow us to compose the operators $A_j \in \Psi^{m_j}(N)$, $j=1,2$, and we have $A_1A_2 \in \Psi^{m_1+m_2}(N)$ with principal symbol $\sigma_{A_1A_2}=\sigma_{A_1}\sigma_{A_2}$.

An operator $A\in \Psi^{m}(N)$ is called \textit{elliptic} if there is a constant $C>0$, independent of $h$, such that the principal symbol satisfies
\[
|\sigma_A(x,\xi;h)|>\frac{1}{C}\langle \xi \rangle^m.
\]
An elliptic semiclassical operator $A\in \Psi^{m}(N)$ has an inverse $R \in \Psi^{-m}(N)$ in the sense that there exists $h_0>0$ such that for all $h\in (0,h_0)$ we have $RA=AR=I$ as linear operators on $C^\infty(M)$ and 
\[
\sigma_R(x,\xi;h)=\sigma_A(x,\xi;h)^{-1}\in S^{-m}(T^\ast N)/S^{-m-1}(T^\ast N).
\]
By \cite[Proposition 10.1]{Sjo_2009}, the operator
\begin{equation}
\label{eq:J_operator}
J^s = (1-h^2\Delta_{\tilde g})^{\frac{s}{2}}, \quad s\in \R,
\end{equation}
which is defined by the means of the spectral theorem, is elliptic and belongs to the class $\Psi^{s}(N)$ with principal symbol $\langle \xi \rangle^s$. We note that for all $s_1, s_2\in \R$, we  have 
\begin{equation}\label{eq:J^2_property2}
J^{s_1+s_2} = J^{s_1}J^{s_2},\quad (J^{s_1})^{-1} = J^{-s_1},\quad J^0 = I.
\end{equation}

We now define the semiclassical inner product of order $s\in \R$:
\[
(u,v)_{H^s_{\scl}(N)} := (J^su,J^sv)_{L^2(N)}, \quad u,v \in C^\infty(N).
\]
Then the semiclassical Sobolev space $H^s_{\scl}(N)$ is defined as the completion of $C^\infty(N)$ with respect to a related norm. Furthermore, every operator $A\in \Psi^{m}(N)$ 
yields a bounded map $A\colon H^s_{\scl}(N) \to H^{s-m}_{\scl}(N)$. Also, we recall that if $A$ is negligible, then the operator norm satisfies \begin{equation}
\label{eq:negligible_op_norm}
\|A\|_{H^{-s}_{\scl}(N) \to H^{s}_{\scl}(N)}=\cO(h^\infty) \quad \text{for all } s \in \R.
\end{equation}

Finally, we discuss the definition of semiclassical Sobolev spaces on an open subset $U \subset N$. We recall that for $u \in C^\infty(N)$, the norms
$
\|u\|_{H^{1}_{\scl}(N)}
$
and
\[
\|u\|^2_{h}:=\|u\|^2_{L^2(N)}+\|h\nabla_{\tilde g}u\|^2_{L^2(N)}
\]
are equivalent. We use the latter to define the semiclassical Sobolev space
$
H^{1}_{\scl}(U)
$
as a completion of $C^\infty(U)$ with respect to the norm $\|\cdot\|_h$ restricted on $U$, and $H^{-1}_{\scl}(U)$ as the topological dual of $H^1_{\scl}(U)$. We would like to recall the following characterization \cite[Section 3.13]{adams2003sobolev} of $H^{-1}_{\scl}(N)$ via the completion of $L^2(N)$ with respect to the norm
\begin{equation}
\label{eq:H-1norm}
\|v\|_{H^{-1}_{\scl}(N)} = \sup_{0\neq \psi \in H^{1}_{\scl}(N)}\frac{|\langle v,\psi \rangle_{L^2(N)}|}{\|\psi\|_{H^{1}_{\scl}(N)}}.
\end{equation}

\subsection{Interior Carleman estimate}
In this subsection we assume that $(\overline{Q},e\oplus g)$ is isometrically embedded into a closed Riemannian manifold  $(N,g')$ without boundary, where $g' =e\oplus g$ in some open neighborhood $U \subset N$ of $\overline{Q}$. Here $U=(a,b)\times \hat M$, where $[0,T]\subset (a,b)$ and $\hat{M}$ is an open manifold such that $M \subset \hat M$.
To prove the existence of suitable solutions to $\mathcal{L}_{g,a,q}u=0$ in $Q$, we need the following interior Calerman estimate for negative order Sobolev spaces.


\begin{lem}
\label{prop:shift_index}
Let $a\in W^{1,\infty}(Q)$ and $q\in L^\infty(Q, \C)$. Then for all $0<h \ll \varepsilon \ll 1$ and $w\in C^\infty_0(Q)$, there exists a constant $C>0$ such that
\begin{equation}
\label{eq:shift_index}
h\|w\|_{L^2(N)} \le C\|e^{\mp\frac{1}{h}(\beta t +x_1)}h^2\mathcal{L}_{g,a,q}e^{\pm\frac{1}{h}(\beta t+x_1)}w\|_{H^{-1}_{\scl}(N)}.
\end{equation}
\end{lem}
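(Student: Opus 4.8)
The plan is to derive the negative-order estimate \eqref{eq:shift_index} from the first-order estimate obtained by setting $\varepsilon$ fixed in Proposition \ref{thm:boundary_carleman}, using the standard trick of commuting the conjugated operator with the elliptic smoothing operator $J^{-1} = (1-h^2\Delta_{g'})^{-1/2}$. Write $P_{\pm} = e^{\mp\frac{1}{h}(\beta t+x_1)}h^2\mathcal{L}_{g,a,q}e^{\pm\frac{1}{h}(\beta t+x_1)}$ for the conjugated operator; since $\beta t + x_1$ is a linear (hence limiting-type) weight for the wave operator $\Box_{e\oplus g_0}$, the principal part of $P_\pm$ is $h^2\Box_{g} + (\text{first order in }h)$, and it is helpful to think of $P_\pm = P_\pm^0 + h^2(a\p_t + q) \pm h a\beta$, where $P_\pm^0$ is the conjugated leading wave operator treated in Proposition \ref{thm:boundary_carleman}. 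First I would record, from the interior version of the $L^2$ Carleman estimate (i.e.\ specializing \eqref{eq:Carleman_est} to $w\in C_0^\infty(Q)$, where all boundary and $t=T$ terms vanish, and absorbing the $\varepsilon$-dependent constants into $C$), the baseline bound
\[
h\|w\|_{L^2(N)} + h^2\|\p_t w\|_{L^2(N)} + h^2\|\nabla_g w\|_{L^2(N)} \le C\|P_\pm w\|_{L^2(N)}, \qquad w\in C_0^\infty(Q).
\]
Equivalently, $h\|w\|_{H^1_{\scl}(N)} \le C\|P_\pm w\|_{L^2(N)}$ for such $w$.

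Next I would apply this baseline estimate to $v = J^{-1}w$ in place of $w$. The subtlety is that $J^{-1}w$ is no longer compactly supported in $Q$, but it is $\cO(h^\infty)$-close in every semiclassical Sobolev norm to a function that is, by pseudolocality of $J^{-1}$; alternatively one extends the $L^2$-estimate to $H^1_{\scl}(N)$-functions supported in a slightly larger set still inside $U$ where $g'=e\oplus g$, using Remark~\ref{rem:density_argument}-style density together with the fact that the estimate is interior. Modulo this technical point, one gets $h\|J^{-1}w\|_{H^1_{\scl}(N)} \le C\|P_\pm J^{-1}w\|_{L^2(N)} + \cO(h^\infty)\|w\|$. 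Since $\|J^{-1}w\|_{H^1_{\scl}} = \|w\|_{L^2}$ by \eqref{eq:J^2_property2} and the definition of the semiclassical norms, the left side is exactly $h\|w\|_{L^2(N)}$. For the right side I would write $P_\pm J^{-1}w = J^{-1}P_\pm w + [P_\pm, J^{-1}]w$ and use that $J^{-1}\colon H^{-1}_{\scl}\to L^2$ is bounded uniformly in $h$, so $\|J^{-1}P_\pm w\|_{L^2} \le C\|P_\pm w\|_{H^{-1}_{\scl}}$, which is the quantity we want on the right of \eqref{eq:shift_index}.

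The main obstacle — and the step I expect to require the most care — is controlling the commutator $[P_\pm, J^{-1}]w$ in $L^2(N)$ by a small multiple of $h\|w\|_{L^2(N)}$, so that it can be absorbed into the left side. Heuristically $P_\pm \in \Psi^2$ with an extra power of $h$ on each derivative, and $J^{-1}\in\Psi^{-1}$, so $[P_\pm, J^{-1}] = P_\pm J^{-1} - J^{-1}P_\pm$ has order $0$ but carries an extra factor of $h$ from the semiclassical symbol expansion; thus $\|[P_\pm, J^{-1}]w\|_{L^2} \le Ch\|w\|_{L^2}$, which is borderline and must be beaten. The honest resolution is to split $P_\pm = h^2\Box_{g'} + hR_1 + h^2 R_0$ where $R_1$ is a first-order semiclassical operator (coefficients built from $\beta$, $\nabla\varphi$, $a$) and $R_0$ is order zero (containing $q$ and $|\nabla\varphi|^2$-type terms); then $[h^2\Box_{g'}, J^{-1}] = \cO(h^3)$ in $\Psi^0$ because $\Box_{g'}$ commutes with functions of $\Delta_{g'}$ up to the $\p_t^2$-versus-$\Delta$ mismatch, whose commutator with $J^{-1}$ is genuinely $\cO(h^3)$ as an operator $L^2\to L^2$ after one gains $h$ from each of two symbol-expansion steps (here the hypothesis $a\in W^{1,\infty}$ rather than merely $L^\infty$ is used, so that $R_1$ has the regularity needed for a one-term symbol expansion of $[R_1,J^{-1}]$ to leave an $\cO(h^2)$ remainder in $\Psi^{-1}$, hence $\cO(h)\cdot h$ in $L^2$). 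Collecting, $\|[P_\pm,J^{-1}]w\|_{L^2(N)} \le Ch^{2}\|w\|_{L^2(N)}$, which for $0<h\ll 1$ is absorbed into $\tfrac12 h\|w\|_{L^2(N)}$ on the left, yielding \eqref{eq:shift_index}. I would present this commutator bookkeeping as a short lemma in semiclassical calculus (citing \cite{Zworski,Sjo_2009}) and keep the main argument clean.
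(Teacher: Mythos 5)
Your high-level strategy --- insert $J^{-1}$, use the $L^2$ Carleman estimate, and control the commutator --- is the same one the paper uses (it applies the $L^2$ estimate to $\chi J^{-1}u$ with a cutoff $\chi\in C_0^\infty(U)$, then estimates $[\Box_{\varphi_{\pm h,\varepsilon}},J^{-1}]$ and $[\Box_{\varphi_{\pm h,\varepsilon}},\chi]$). But you dismiss the convexification parameter $\varepsilon$ too early (``absorbing the $\varepsilon$-dependent constants into $C$''), and that discards exactly the mechanism the paper uses at the step you correctly identify as the crux. In the paper, the $L^2$ estimate for the conjugated principal part, \eqref{eq:bound_conjugated}, carries the constant $\frac{h}{\sqrt{\varepsilon}}$; the commutator $[\Box_{\varphi_{\pm h,\varepsilon}},J^{-1}]=hR_1$ with $R_1\in\Psi^0(N)$ contributes only $\mathcal{O}(h)\|u\|_{L^2}$, and after multiplying through by $\mathcal{O}(\sqrt{\varepsilon}/h)$ this becomes $\mathcal{O}(\sqrt{\varepsilon})\|u\|_{L^2}$, which is absorbed at the very end by fixing $\varepsilon$ small. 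No sharpening of the commutator beyond $\mathcal{O}(h)$ is needed or claimed; the ``borderline'' term you flag is handled by keeping $\varepsilon$ alive, not by beating the power of $h$.

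Your replacement --- an improved $\mathcal{O}(h^2)$ commutator bound --- would be a genuinely different (and perhaps more elegant) route, and it is not crazy: the linear weight $\pm\frac{1}{h}(\beta t+x_1)$ produces a conjugated leading operator whose coefficients are $t$- and $x_1$-independent, and on $U$ the symbol of $J^{-1}$ is likewise $t$- and $x_1$-independent and depends on $\xi'$ only through $|\xi'|^2_{g_0}$, so the relevant Poisson bracket vanishes and the commutator is one order better than the crude count. However, what you actually write is not correct. The decomposition $P_\pm = h^2\Box_{g'}+\cdots$ misidentifies the leading part (conjugating $h^2\mathcal{L}_{g,a,q}$ by $e^{\pm\frac{1}{h}(\beta t+x_1)}$ yields $h^2\Box_g\pm 2h(\beta\partial_t-\partial_{x_1})+(\beta^2-1)+\cdots$, and $\Box_g=\partial_t^2-\Delta_g$ has nothing to do with $\Box_{g'}$), and the justification ``$\Box_{g'}$ commutes with functions of $\Delta_{g'}$ up to the $\partial_t^2$-versus-$\Delta$ mismatch, whose commutator with $J^{-1}$ is genuinely $\mathcal{O}(h^3)$'' is internally inconsistent: if it commutes, the commutator is $\mathcal{O}(h^\infty)$ after the $\chi$-cutoff, and if there is a ``mismatch'' you must say precisely what it is. More seriously, you push the $W^{1,\infty}$ damping coefficient $a$ into the commutator with $J^{-1}$, which lies outside the smooth symbol calculus of \cite{Zworski}; the paper avoids this entirely by leaving $e^{-\varphi_{\pm h,\varepsilon}}h^2(a\partial_t+q)e^{\varphi_{\pm h,\varepsilon}}$ on the other side of the triangle inequality and estimating its $H^{-1}_{\scl}$-norm directly by duality, integrating by parts against a test function $\psi$ with $\|\psi\|_{H^1_{\scl}(N)}=1$ and then using \eqref{eq:H-1norm}.
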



\begin{proof}
We shall follow the arguments from \cite[Section 2]{Krupchyk_Uhlmann_magschr}, see also \cite{DDS_Kenig_Sjo_Uhl,Krishnan_Vashisth}. 
We start by extending $a$ and $q$ from $W^{1,\infty}(Q)$ and $L^{\infty}(Q)$ to $W^{1,\infty}(N)$ and $L^{\infty}(N)$, respectively, and denote the extensions by the same letters.
Throughout the proof of this lemma, we will use the following shorthand notations for the convexified operators:
\[
{\Box_{\varphi_{\pm h, \varepsilon}}} = e^{-\varphi_{\pm h, \varepsilon}}h^2{\Box_g}e^{\varphi_{\pm h, \varepsilon}}
\quad \text{and} \quad 
\mathcal{P}_{\varphi_{\pm h, \varepsilon}} = e^{-\varphi_{\pm h, \varepsilon}}h^2\mathcal{L}_{g,a,q}e^{\varphi_{\pm h, \varepsilon}}.
\]
Here the convexified weight $\varphi_{\pm h,\varepsilon}$ is defined as in \eqref{eq:perturbed_weight}. Let us also define $u:=e^{\frac{t^2}{2\varepsilon}}w\in C^\infty_0(Q)$. 

We get from the triangle inequality that 
\begin{align}\label{eq:proof_lemma35_1}
\|\mathcal{P}_{\varphi_{\pm h,\varepsilon}}u\|_{H^{-1}_{\scl}(N)} \ge \|\Box_{\varphi_{\pm h,\varepsilon}}u\|_{H^{-1}_{\scl}(N)} -\|e^{-\varphi_{\pm h, \varepsilon}}h^2(a\p_t+q)e^{\varphi_{\pm h, \varepsilon}}u\|_{H^{-1}_{\scl}(N)},
\end{align}
and begin by estimating the first term on the right-hand side of inequality \eqref{eq:proof_lemma35_1}. This will be followed by a perturbation of the first term with the second term, which contains the lower order terms. 

Let the open set $U \subset N$ be the same as at the beginning of this subsection.
We note that by taking $a = q = 0$ and $v\in C^\infty_0(U)$, Proposition \ref{thm:boundary_carleman} yields
\begin{equation}
\label{eq:bound_conjugated}
\frac{h}{\sqrt{\varepsilon}}\|v\|_{H^1_{\scl}(N)} 
\le 
C\|\Box_{\varphi_{\pm h,\varepsilon}}v\|_{L^2(N)},
\end{equation}
where $C = \frac{2}{\sqrt{3\beta^2 - 1}}$.

Let $\chi\in C^\infty_0(U)$ be equal to $1$ in a neighborhood of $\overline{Q}$.
Since the operator $J^{-1}$, as defined in \eqref{eq:J_operator}, is in $\Psi^{-1}(N)$, and $\supp(1-\chi)\cap\supp u = \emptyset$, we have by \eqref{eq:negligible_op_norm}
that
\begin{equation}\label{eq:pseudolocal}
\|(1-\chi)J^{-1}u\|_{H^1_{\scl}(N)} = \mathcal{O}(h^{\infty}) \|u\|_{H^{-1}_{\scl(N)}} \le \mathcal{O}(h^{\infty})\|u\|_{L^2(N)}.
\end{equation}
Therefore, for all $0<h \ll \varepsilon \ll 1$ we have
\begin{equation}
\label{eq:proof_lemma35_2}
\begin{aligned}
\|u\|_{L^2(N)} 
& \le
\|\chi J^{-1} u \|_{H^1_{\scl}(N)} +  \|(1-\chi)J^{-1} u \|_{H^1_{\scl}(N)}
\le \|\chi J^{-1} u \|_{H^1_{\scl}(N)}+\cO(h^\infty)\|u\|_{L^2(N)}.
\end{aligned}
\end{equation}

To estimate the first term on the right-hand side of \eqref{eq:proof_lemma35_2}, we apply \eqref{eq:bound_conjugated} 
with $v = \chi J^{-1}u\in C^\infty_0(U)$ to get from the triangle inequality and the commutator $[A,B]=AB-BA$ that
\begin{equation}
\label{eq:est_v_H^1}
\begin{aligned}
&\|\chi J^{-1}u\|_{H^1_{\scl}(N)} 
\\&
\le \mathcal{O}\left(\frac{\sqrt{\varepsilon}}{h}\right) \left(\|\chi J^{-1} \Box_{\varphi_{\pm h,\varepsilon}} u\|_{L^2(N)} 
+ \|\chi [\Box_{\varphi_{\pm h,\varepsilon}} ,J^{-1}]u\|_{L^2(N)} 
+ \|[\Box_{\varphi_{\pm h,\varepsilon}},\chi ]J^{-1}u\|_{L^2(N)}\right).
\end{aligned}
\end{equation}

We now estimate the two terms containing the commutators on the right-hand side of \eqref{eq:est_v_H^1}. Noting that $\Box_{\varphi_{\pm h, \varepsilon}}\in \Psi^{2}(N)$, an application of \cite[Theorem 9.5(iii)]{Zworski} yields that the commutator satisfies the equation $[\Box_{\varphi_{\pm h, \varepsilon}},J^{-1}]=hR_1$, where $R_1 \in \Psi^0(N)$. Thus, for the second term on the right-hand side of \eqref{eq:est_v_H^1} we have
\begin{equation}
\label{eq:lemma35_comm_1}
\|\chi [\Box_{\varphi_{\pm h,\varepsilon}} ,J^{-1}]u\|_{L^2(N)} \le h\|u\|_{L^2(N)}.
\end{equation}

For the third term on the right-hand side of \eqref{eq:est_v_H^1}, we observe that
\[
[\Box_{\varphi_{\pm h,\varepsilon}},\chi ]=h^2\Box\chi +2h^2\left(\p_t\chi\p_t \varphi_{\pm h,\varepsilon}-\langle \nabla_g\chi,\nabla_g\varphi_{\pm h,\varepsilon} \rangle\right) + 2h^2(\p_t\chi\p_t- \langle \nabla_g\chi,\nabla_g\cdot \rangle).
\]
Thus, $\supp([\Box_{\varphi_{\pm h,\varepsilon}},\chi ])\subset \supp (\p_t\chi,\nabla_g\chi)$ and $\supp (\p_t\chi,\nabla_g\chi)\cap \supp u = \emptyset$. Therefore, we get from \eqref{eq:negligible_op_norm} that
\begin{equation}\label{eq:lemma35_comm_2}
\|[\Box_{\varphi_{\pm h,\varepsilon}},\chi ]J^{-1}u\|_{L^2(N)}\le \mathcal{O}(h^\infty)\|u\|_{L^2(N)}.
\end{equation}

Since $\chi J^{-1}\in \Psi^{-1}(N)$, together with \eqref{eq:est_v_H^1}, \eqref{eq:lemma35_comm_1}, and \eqref{eq:lemma35_comm_2}, we deduce from \eqref{eq:proof_lemma35_2} that 
\begin{align*}
\|u\|_{L^2(N)}&
\le \mathcal{O}\left(\frac{\sqrt{\varepsilon}}{h}\right) \|\Box_{\varphi_{\pm h,\varepsilon}} u\|_{H^{-1}_{\scl}(N)} + \mathcal{O}(\sqrt{\varepsilon})\|u\|_{L^2(N)}+\mathcal{O}(h^\infty)\|u\|_{L^2(N)}.
\end{align*}
For later purposes, let us rewrite this inequality as
\begin{equation}
\label{eq:proof_lemma35_3}
\frac{h}{\sqrt{\varepsilon}}\|u\|_{L^2(N)}\le \mathcal{O}(1)\|\Box_{\varphi_{\pm h,\varepsilon}} u\|_{H^{-1}_{\scl}(N)} + \mathcal{O}(h)\|u\|_{L^2(N)}.
\end{equation}

To estimate the lower order terms in \eqref{eq:proof_lemma35_1}, 
taking $0\neq\psi \in H^{1}_{\scl}(N)$ with $\|\psi\|_{H^{1}_{\scl}(N)}=1$, we integrate by parts to obtain
\[
\begin{aligned}
&\langle e^{-\varphi_{\pm h, \varepsilon}}h^2(a\p_t+q)e^{\varphi_{\pm h, \varepsilon}}u,\psi \rangle_{L^2(N)}
\\
&=-\langle u,  h^2\left((\p_t a)\psi+a\p_t\psi-(\p_t\varphi_{\pm h,\varepsilon})a\psi \right)\rangle_{L^2(N)} + \langle h^2qu,\psi \rangle_{L^2(N)}.
\end{aligned}
\]
By recalling that $\p_t\varphi_{\pm h,\varepsilon} =\pm \frac{\beta}{h}-\frac{t}{\varepsilon}$, $\|\psi\|_{H^1_{\scl}(N)}=1$,  $h<\varepsilon$, and $\beta<1$, we get from the Cauchy–Schwartz inequality that 
\begin{equation}
\label{eq:proof_lemma35_4}
\begin{aligned}
|\langle e^{-\varphi_{\pm h, \varepsilon}}h^2(a\p_t+q)e^{\varphi_{\pm h, \varepsilon}}u,\psi \rangle_{L^2(N)}|
\le h(3+T)(\|a\|_{W^{1,\infty}(N)}+\|q\|_{L^\infty(N)})\|u\|_{L^2(N)}.
\end{aligned}
\end{equation}

Therefore, the characterization \eqref{eq:H-1norm} of the semiclassical $H^{-1}$-norm and  \eqref{eq:proof_lemma35_4} imply that
\begin{align}
\label{eq:proof_lemma35_5}
\|e^{-\varphi_{\pm h, \varepsilon}}h^2(a\p_t+q)e^{\varphi_{\pm h, \varepsilon}}u\|_{H^{-1}_{\scl}(N)} \le \mathcal{O}(h) \|u\|_{L^2(N)}.
\end{align}
Using \eqref{eq:proof_lemma35_3} and \eqref{eq:proof_lemma35_5}, we derive from \eqref{eq:proof_lemma35_1} that
\begin{align*}\label{}
\mathcal{O}(1)\|\mathcal{P}_{\varphi_{\pm h,\varepsilon}}u\|_{H^{-1}_{\scl}(N)} + \mathcal{O}(h)\|u\|_{L^2(N)} \ge \frac{h}{\sqrt{\varepsilon}}\|u\|_{L^2(N)},
\end{align*}
which can be rewritten as 
\begin{align*}
\mathcal{O}(\frac{\sqrt{\varepsilon}}{h})\|\mathcal{P}_{\varphi_{\pm h,\varepsilon}}u\|_{H^{-1}_{\scl}(N)} + \mathcal{O}(\sqrt{\varepsilon})\|u\|_{L^2(N)} \ge \|u\|_{L^2(N)}.
\end{align*}
We now take $\varepsilon$ small enough but fixed to absorb the second term on the left-hand side and get 
\begin{align}\label{eq:appriori_est}
\mathcal{O}(\frac{1}{h})\|\mathcal{P}_{\varphi_{\pm h,\varepsilon}}u\|_{H^{-1}_{\scl}(N)} \ge \|u\|_{L^2(N)}.
\end{align}

Finally, we use $w = e^{-\frac{t^2}{2\varepsilon}}u$ for $0<t<T$ and apply \eqref{eq:appriori_est}
to obtain
\begin{align*}
\|w\|_{L^2(N)}
\le \cO(\frac{1}{h}) e^{\frac{T^2}{2\varepsilon}} \|e^{\mp\frac{1}{h}(\beta t +x_1)}h^2\mathcal{L}_{g,a,q}e^{\pm\frac{1}{h}(\beta t+x_1)}w\|_{H^{-1}_{\scl}(N)}.
\end{align*}
Here the inequality relies on the assumption $u\in C_0^\infty(Q)$. This completes the proof of Proposition \ref{prop:shift_index}. 
\end{proof}

The following solvability result will be implemented in the next section to construct CGO solutions for the operator $\mathcal{L}_{g,a,q}$.
\begin{prop}
\label{prop:solvability}
Let $a \in W^{1,\infty}(Q)$, $q \in L^\infty(Q, \C)$, and $s = \frac{1}{h} +i\lambda$ with $\lambda \in \R$ fixed. If $h>0$ is small enough, then for all $v\in L^2(Q)$ there exists a solution $u\in H^1_{\scl}(Q)$ to the equation
\begin{equation}
\label{eq:solvability}
e^{\pm s(\beta t+x_1)}h^2\mathcal{L}_{g, a, q}e^{\mp s(\beta t+x_1)}u = v \textit{ in } Q
\end{equation}
such that 
\begin{equation}
\label{eq:solvability_bound}
\|u\|_{H^1_{\scl}(Q)}\le \cO(h^{-1})\|v\|_{L^2(Q)}.
\end{equation}
\end{prop}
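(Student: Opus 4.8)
The plan is to deduce solvability from an a priori estimate for the formal transpose of the conjugated operator, combined with a Hahn--Banach duality argument, following the scheme of \cite[Section 2]{Krupchyk_Uhlmann_magschr}. Write $\psi(t,x)=\beta t+x_1$ and fix one of the two sign choices, say $\mathcal{A}:=e^{-s\psi}h^2\mathcal{L}_{g,a,q}e^{s\psi}$; the other sign is treated identically. First I would identify the formal $L^2(Q)$-transpose $\mathcal{A}^{t}=e^{s\psi}h^2\mathcal{L}_{g,a,q}^{t}e^{-s\psi}$, and observe that $\mathcal{L}_{g,a,q}^{t}=\Box_g-a\p_t+(q-\p_ta)$ is again of the form $\mathcal{L}_{g,\hat a,\hat q}$ with $\hat a=-a\in W^{1,\infty}(Q)$ and $\hat q=q-\p_ta\in L^\infty(Q)$, so that Lemma \ref{prop:shift_index} applies to it without change.

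The key step will be to upgrade the interior Carleman estimate of Lemma \ref{prop:shift_index} into an estimate for $\mathcal{A}^{t}$. Splitting $s=\frac1h+i\lambda$ and writing $e^{\pm s\psi}=e^{\pm i\lambda\psi}e^{\pm\frac1h\psi}$ gives $\mathcal{A}^{t}=e^{i\lambda\psi}\big(e^{\frac1h\psi}h^2\mathcal{L}_{g,a,q}^{t}e^{-\frac1h\psi}\big)e^{-i\lambda\psi}$. Since $\psi$ is smooth with bounded derivatives on $\overline{Q}$ and $\lambda$ is fixed, multiplication by $e^{\pm i\lambda\psi}$ is bounded on $H^{\pm1}_{\scl}(N)$ with $\cO(1)$ norm uniformly in $h\in(0,1)$, and so is its inverse. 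Applying Lemma \ref{prop:shift_index} to $\mathcal{L}_{g,a,q}^{t}$ with $e^{-i\lambda\psi}w$ in place of $w$ then yields, for all $w\in C_0^\infty(Q)$,
\[
h\|w\|_{L^2(Q)}\le C\,\|\mathcal{A}^{t}w\|_{H^{-1}_{\scl}(N)}\le C\,\|\mathcal{A}^{t}w\|_{H^{-1}_{\scl}(Q)},
\]
where in the last step I would use that $\mathcal{A}^{t}w$ is supported in $Q$, so restricting test functions from $N$ to $Q$ only shrinks the $H^1_{\scl}$-norm and hence bounds $\|\mathcal{A}^{t}w\|_{H^{-1}_{\scl}(N)}$ by $\|\mathcal{A}^{t}w\|_{H^{-1}_{\scl}(Q)}$.

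With this a priori bound in hand, existence follows by duality. Given $v\in L^2(Q)$, the map $\mathcal{A}^{t}w\mapsto\langle w,v\rangle_{L^2(Q)}$ is a well-defined linear functional on the subspace $\mathcal{A}^{t}\big(C_0^\infty(Q)\big)\subset H^{-1}_{\scl}(Q)$ --- well-definedness being exactly the injectivity furnished by the estimate above --- bounded by $\frac{C}{h}\|v\|_{L^2(Q)}$. Extending it by Hahn--Banach to all of $H^{-1}_{\scl}(Q)$ and representing it through the duality $\big(H^{-1}_{\scl}(Q)\big)^{\ast}=H^1_{\scl}(Q)$ produces $u\in H^1_{\scl}(Q)$ with $\|u\|_{H^1_{\scl}(Q)}\le\cO(h^{-1})\|v\|_{L^2(Q)}$ and $\langle\mathcal{A}^{t}w,u\rangle=\langle w,v\rangle$ for every $w\in C_0^\infty(Q)$, i.e. $\mathcal{A}u=v$ distributionally in $Q$; this is exactly \eqref{eq:solvability}--\eqref{eq:solvability_bound}. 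I do not expect a genuine obstacle beyond careful bookkeeping: the delicate points will be that passing to $\mathcal{A}^{t}$ interchanges the exponentially growing and decaying weights and replaces $(a,q)$ by admissible coefficients, and that the unimodular conjugation by $e^{\pm i\lambda\psi}$ must be controlled uniformly in $h$ on the semiclassical Sobolev scale --- everything else is already contained in Lemma \ref{prop:shift_index}.
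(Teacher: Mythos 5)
Your proposal is correct and coincides with the argument the paper delegates to its references (\cite{Krupchyk_Uhlmann_magschr_2014}, \cite{Krishnan_Vashisth}): apply the $H^{-1}_{\scl}$ Carleman estimate of Lemma~\ref{prop:shift_index} to the transposed, oppositely-conjugated operator (noting that $\mathcal{L}_{g,a,q}^{t}=\mathcal{L}_{g,-a,\,q-\p_t a}$ falls in the admissible coefficient class), absorb the unimodular factor $e^{\pm i\lambda(\beta t+x_1)}$, and then run the standard Hahn--Banach duality argument to produce $u\in H^1_{\scl}(Q)$ with the stated $\cO(h^{-1})$ bound. The bookkeeping you flag — the sign flip under transposition, the passage from $H^{-1}_{\scl}(N)$ to $H^{-1}_{\scl}(Q)$ via support, and reflexivity of $H^1_{\scl}(Q)$ — is all handled correctly.
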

\begin{proof}
The proof uses standard functional analysis arguments, which have been utilized to prove analogous results for elliptic operators, see for instance \cite{Ferreira_Kenig_Salo_Uhlmann,Krupchyk_Uhlmann_magschr_2014}, as well as hyperbolic operators in \cite{Krishnan_Vashisth}. 
\end{proof}

\section{Construction of CGO Solutions Based on Gaussian Beam Quasimodes}
\label{sec:CGO_solution}
Let $(M,g)$ be a CTA manifold, $a \in W^{1,\infty}(Q)$, and $q \in C(\overline{Q})$. Let $0<h <1$, $\lambda \in \R$, and $s=\frac{1}{h}+i\lambda \in \C$. In the first part of this section we shall assume that the conformal factor $c=1$ and write $\mathcal{L}_{g,a,q}$ for $\mathcal{L}_{c,g,a,q}$. The goal of this section is to construct an exponentially decaying, with respect to the real part of $s$, solution to the equation $\mathcal{L}_{g, a,q}^*u_1=0$ in $Q$ of the form
\begin{equation}
\label{eq:form_CGO}
u_1=e^{-s(\beta t+x_1)}(v_s+r_1),
\end{equation}
where $\mathcal{L}_{g, a, q}^\ast =\mathcal{L}_{g, -\overline{a}, \overline{q}-\p_t\overline{a}}$ is the formal $L^2$-adjoint of the operator $\mathcal{L}_{g, a, q}$, as well as an exponentially growing solution to the equation $\mathcal{L}_{g,a,q}u_2=0$ in $Q$ of the form
\begin{equation}
\label{eq:CGO_exp_grow}
u_2=e^{s(\beta t+x_1)}(w_s+r_2).
\end{equation}
Here $v_s$ and $w_s$ are smooth Gaussian beam quasimodes, and $r_1=r_{1,s}, r_2=r_{2,s}$ are correction terms that vanish in the limit $h\to 0$. 
We construct the Gaussian beam quasimodes $v_s$ and $w_s$ for each nontangential geodesic of $(M_0,g_0)$ in Subsection \ref{subsec:Gaussian_beam}. In particular, the functions $v_s$ and $w_s$ satisfy the estimates given in \eqref{eq:estimate_v} and \eqref{eq:estimate_w}. In Subsection \ref{Ssec:concetration} we will establish a concentration property of the quasimodes along the nontangential geodesic in the limit $h\to 0$. Finally, in Subsection \ref{subsec:CGO} we will construct the remainder terms $r_1$ and $r_2$, whose existence and decaying properties follow directly from the estimates for the quasimode and the interior Carleman estimate Proposition \ref{prop:solvability}. We emphasize that the CGO solutions \eqref{eq:form_CGO} and \eqref{eq:CGO_exp_grow} are constructed under the assumption $c=1$. We shall incorporate general conformal factors $c$ and modify our CGO solutions accordingly in Subsection \ref{subsec:CGO}.

Let us write $x=(x_1, x')$ for coordinates in $\R \times M_0$, globally in $\R$ and locally in $M_0$. To justify our construction we note that a function $u_1$ of the form  \eqref{eq:form_CGO} solves the equation $\mathcal{L}^{\ast}_{g, a,q}u_1=0$ if
\[
e^{s(\beta t+x_1)}\mathcal{L}^{\ast}_{g, a,q}e^{-s(\beta t+x_1)}r_1=-e^{s(\beta t+x_1)}\mathcal{L}^{\ast}_{g, a,q}e^{-s(\beta t+x_1)}v_s.
\]

\subsection{Construction of Gaussian beam quasimodes}	
\label{subsec:Gaussian_beam}

In this subsection we focus on constructing Gaussian beam quasimodes. Initially introduced in \cite{Babich_Ulin, Ralston_1982}, the construction of Gaussian beam quasimodes has a very long tradition in spectral theory and in microlocal analysis, see also \cite{Babich_Buldyrev, Ralston_1977}. Gaussian beam quasimodes have also been used extensively to solve inverse problems, starting with \cite{Belishev_Katchalov, Katchalov_Kurylev}. Among the literature in this direction, we refer readers to \cite{Cekic, Ferreira_Kenig_Salo_Uhlmann, Ferreira_Kur_Las_Salo, Krupchyk_Uhlmann_magschr,Yan} for applications to elliptic operators and \cite{Feizmohammadi_et_all_2019,KKL_book} to hyperbolic operators. 

Let $(M, g)$ be a CTA manifold with the conformal factor $c=1$, and let $T>0$. 
Replacing the transversal manifold $(M_0, g_0)$ by a slightly larger manifold if necessary, we may assume without loss of generality that $(M, g)\subset (\R \times M_0^\mathrm{int}, e\oplus g_0)$. 
The Gaussian beam quasimodes will be constructed in $\R^2 \times M_0^{\mathrm{int}}$. To obtain $C^\infty$-smooth Gaussian beam quasimodes, we shall regularize the damping coefficient $a$ and explain the necessity of doing so in the proof of Theorem \ref{prop:Gaussian_beam}. 
We extend $a$ to $W^{1,\infty}(\R^2\times M_0^{\mathrm{int}})$ with compact support.
Using a partition of unity argument combined with a regularization in each coordinate patch, we have the following result, see \cite[Lemma 2.1]{Salo_diss} for details.
\begin{prop}
\label{prop:regularization}
For any $a\in W_0^{1,\infty}(\R^2\times M_0^{int})$, there exists an open  and bounded set $W\subset \R^2\times M_0^{int}$ and a family $a_\zeta\in C_0^\infty (W, \C)$ such that  
\begin{equation}
\label{eq:est_diff_regu}
\begin{aligned}
&\|a-a_\zeta\|_{L^\infty}=o(1), \quad \|a_\zeta\|_{L^\infty}=\cO(1), \quad \|\nabla_g a_\zeta\|_{L^\infty}=o(\zeta^{-1}),
\\
&\|\p_t a_\zeta\|_{L^\infty}=o(\zeta^{-1}), \quad \|\p_t^2 a_\zeta\|_{L^\infty}=o(\zeta^{-2}), \quad \|\Delta_g a_\zeta\|_{L^\infty}=o(\zeta^{-2}), \quad \zeta \to 0.
\end{aligned}
\end{equation}
Here the $L^\infty$-norms are taken over the set $\R^2\times M_0^{int}$.
\end{prop}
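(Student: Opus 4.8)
The plan is to reduce the statement to the classical mollification estimates for Lipschitz functions on Euclidean space, transported back to $\R^2\times M_0^{\mathrm{int}}$ through a fixed partition of unity subordinate to finitely many coordinate charts.

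First I would cover the compact set $\supp a\subset \R^2\times M_0^{\mathrm{int}}$ by finitely many charts $(U_j,\kappa_j)$, $j=1,\dots,K$, with $\kappa_j$ a diffeomorphism of $U_j$ onto an open subset of $\R^{n+1}$, in which the coefficients of $\p_t$, $\nabla_g$, and $\Delta_g$ are smooth and uniformly bounded together with all their derivatives on the relevant compact sets (one may take $\kappa_j=\mathrm{id}_{\R^2}\times\kappa_j'$ with $\kappa_j'$ a chart of $M_0^{\mathrm{int}}$). Fixing a subordinate partition of unity $\{\chi_j\}\subset C_0^\infty(U_j)$ with $\sum_j\chi_j\equiv 1$ near $\supp a$, I write $a=\sum_j\chi_j a$ and push each piece to Euclidean space, $b_j:=(\kappa_j^{-1})^\ast(\chi_j a)\in W_0^{1,\infty}(\R^{n+1},\C)$. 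With a standard mollifier $\psi\in C_0^\infty(\R^{n+1})$, $\psi\ge 0$, $\int\psi=1$, and $\psi_\zeta(y)=\zeta^{-(n+1)}\psi(y/\zeta)$, I set $a_\zeta:=\sum_j\kappa_j^\ast(b_j\ast\psi_\zeta)$. For $0<\zeta<\zeta_0$ with $\zeta_0$ fixed and small, every $b_j\ast\psi_\zeta$, hence $a_\zeta$, is supported in a single bounded open set $W\Subset \R^2\times M_0^{\mathrm{int}}$, namely a $\zeta_0$-neighborhood of $\supp a$ (legitimate since $\supp a$ is compact in the open set), so $a_\zeta\in C_0^\infty(W,\C)$ as required.

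It then remains to verify \eqref{eq:est_diff_regu}, which follows from two elementary facts applied to each $b_j$ and transported back through the fixed charts. First, since $b_j$ is Lipschitz, $\|b_j\ast\psi_\zeta-b_j\|_{L^\infty}\le \zeta\|\nabla b_j\|_{L^\infty}\int|y|\psi(y)\,dy=\cO(\zeta)$; summing over $j$ gives $\|a-a_\zeta\|_{L^\infty}=\cO(\zeta)=o(1)$ and $\|a_\zeta\|_{L^\infty}\le\|a\|_{L^\infty}+\cO(\zeta)=\cO(1)$. Second, for a multi-index $\alpha$ with $|\alpha|\ge1$, moving one derivative onto $b_j$ and the other $|\alpha|-1$ onto the mollifier gives $\p^\alpha(b_j\ast\psi_\zeta)=(\p_{y_i}b_j)\ast\p^{\alpha-e_i}\psi_\zeta$, so $\|\p^\alpha(b_j\ast\psi_\zeta)\|_{L^\infty}\le \|\nabla b_j\|_{L^\infty}\|\p^{\alpha-e_i}\psi_\zeta\|_{L^1}=\cO(\zeta^{1-|\alpha|})$; since passing through the $\kappa_j$ and $\chi_j$ only introduces $\zeta$-independent smooth factors, the $|\alpha|=1$ case yields $\|\nabla_g a_\zeta\|_{L^\infty},\|\p_t a_\zeta\|_{L^\infty}=\cO(1)=o(\zeta^{-1})$, while $\p_t^2$ and $\Delta_g$, being second-order operators with bounded smooth coefficients, are controlled by the $|\alpha|=2$ case, giving $\cO(\zeta^{-1})=o(\zeta^{-2})$.

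This argument is essentially routine. The only points requiring mild care are checking that differentiation through the fixed coordinate maps $\kappa_j$ and cutoffs $\chi_j$ produces only $\zeta$-independent constant factors (true because these are chosen before $\zeta$), and that all the $a_\zeta$ stay supported in one bounded set $W$; I do not anticipate a genuine obstacle. Note in passing that the bounds obtained are strictly stronger than the little-$o$ claims in \eqref{eq:est_diff_regu}, the extra power of $\zeta$ per derivative coming precisely from the hypothesis $a\in W^{1,\infty}$.
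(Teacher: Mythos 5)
Your proof is correct and is exactly the argument the paper indicates (partition of unity subordinate to finitely many coordinate charts, followed by mollification in each chart and pullback), citing \cite[Lemma 2.1]{Salo_diss} for the details. In fact your bounds—$\mathcal{O}(\zeta)$, $\mathcal{O}(1)$, and $\mathcal{O}(\zeta^{-1})$ in place of $o(1)$, $o(\zeta^{-1})$, and $o(\zeta^{-2})$—are strictly sharper than what \eqref{eq:est_diff_regu} asks for, as you note.
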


We are now ready to state and prove our first main result of this section.

\begin{thm}
\label{prop:Gaussian_beam}
Let $(M, g)$ be a smooth CTA manifold with boundary, $T>0$, $\beta \in (\frac{1}{\sqrt{3}},1)$, and let $s=\frac{1}{h}+i\lambda$, $0<h\ll 1$, $\lambda \in \R$ fixed.
Let $a \in W^{1,\infty}(Q)$ and $q \in C(\overline{Q})$. Then for every unit speed nontangential geodesic $\gamma$ of $(M_0,g_0)$ there exist one parameter families of Gaussian beam quasimodes $v_s, w_s\in C^\infty(\R^2\times M_0)$ such that the estimates
\begin{equation}
\label{eq:estimate_v}
\begin{aligned}
&\|v_s\|_{L^2(Q)}=\cO(1), 
\quad 
\|\p_t v_s\|_{L^2(Q)}=o(h^{-1/2}),
\quad
\|e^{s(\beta t+x_1)}h^2\mathcal{L}^\ast_{g, a, q}e^{-s(\beta t+x_1)}v_s\|_{L^2(Q)} =o(h),
\end{aligned}
\end{equation}
and
\begin{equation}
\label{eq:estimate_w}
\begin{aligned}
&\|w_s\|_{L^2(Q)}=\cO(1), 
\quad 
\|\p_t w_s\|_{L^2(Q)}=o(h^{-1/2}),
\quad
\|e^{-s(\beta t+x_1)}h^2\mathcal{L}_{g, a, q}e^{s(\beta t+x_1)}w_s\|_{L^2(Q)}=o(h)
\end{aligned}
\end{equation}
are valid as $ h\to 0$.
\end{thm}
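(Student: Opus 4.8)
The plan is to follow the standard WKB/Gaussian beam construction, performed in Fermi coordinates along the geodesic $\gamma$, but carried out on the conjugated hyperbolic operator in the two Euclidean variables $(t,x_1)$. Write the quasimode in the form $v_s = e^{is\theta(x')}\, c_s(t,x_1,x')$ (and similarly for $w_s$, with the opposite sign of the growing exponential weight already stripped off), where $\theta$ is a complex phase depending only on the transversal variable $x'$ and $c_s$ is an amplitude. After conjugating $\mathcal{L}^\ast_{g,a,q}$ by $e^{-s(\beta t + x_1)}$ and multiplying by $h^2$, the leading symbol in the transversal directions is $-\Delta_{g_0}$, and the eikonal equation for $\theta$ is the usual one, $\langle \nabla \theta, \nabla \theta\rangle_{g_0}=1$ along $\gamma$, solved approximately by the classical Gaussian beam ansatz: $\theta$ real to first order along $\gamma$ with $\mathrm{Im}\,\theta$ vanishing on $\gamma$ and positive-definite Hessian transverse to it, so that $e^{is\theta}$ concentrates in an $h^{1/2}$-tube around $\gamma$. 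The Euclidean directions $(t,x_1)$ enter only through the zeroth-order-in-$x'$ part of the conjugated operator; since $\beta^2 - 1 < 0$ one checks that $(\p_t + \beta\, \p_t\text{-weight})$-type terms combine with $\p_{x_1}$-weight terms to leave no obstruction — concretely the amplitude $c_s$ is allowed to depend on $(t,x_1)$ as a free smooth profile, which is exactly what we will exploit later when inserting the Fourier kernel $e^{-i\lambda(\beta t + x_1)}$-type factors in Section~\ref{sec:proof_of_theorem}.

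Next I would set up the transport equations. Expanding the conjugated operator in powers of $s^{-1} \sim h$ and collecting the $s^1$ and $s^0$ terms gives: (i) the eikonal equation for $\theta$, solved as above on a neighborhood of $\gamma$; (ii) a first transport equation for the leading amplitude $a_0$, a linear ODE along $\gamma$ whose solution is an explicit Gaussian-type profile times the relevant damping factor — here the damping coefficient $a$ appears, and this is the point where smoothness is needed, so we replace $a$ by the mollified $a_\zeta$ from Proposition~\ref{prop:regularization} and couple the mollification parameter $\zeta$ to $h$ (e.g.\ $\zeta = \zeta(h) \to 0$ slowly). The errors $\|a - a_\zeta\|_{L^\infty} = o(1)$, $\|\p_t a_\zeta\|_{L^\infty}=o(\zeta^{-1})$, $\|\Delta_g a_\zeta\|_{L^\infty}=o(\zeta^{-2})$ are precisely calibrated so that, after multiplication by the $h^2$ in front, all amplitude-derivative error terms are $o(h)$ provided $h/\zeta \to 0$ and $h/\zeta^2$ stays bounded, which is arranged by the choice of $\zeta(h)$. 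One solves enough transport equations (typically leading order suffices for an $o(h)$ error, though one may need the next order to kill a borderline term) to make the interior equation error $o(h)$ in $L^2(Q)$.

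Then I would assemble the global quasimode. Cover $\gamma$ by finitely many coordinate charts, build a local Gaussian beam in each, and patch with a partition of unity subordinate to the charts; the standard observation is that the beams from overlapping charts agree to infinite order on the overlaps because the phase and amplitude are determined by the same ODE data along $\gamma$, so the patching introduces only $\cO(h^\infty)$ errors. Truncating $e^{is\theta}$ to the tube $\{\,\mathrm{dist}(x',\gamma) < \delta\,\}$ by a cutoff costs $\cO(e^{-c/h})$ because $\mathrm{Im}\,\theta \gtrsim \mathrm{dist}(x',\gamma)^2$ there. The volume of the tube being $\cO(h^{(n-2)/2})$ combined with the height $\cO(h^{-(n-2)/4})$ of the normalized Gaussian gives $\|v_s\|_{L^2(Q)}=\cO(1)$ after the usual normalization; the bound $\|\p_t v_s\|_{L^2(Q)} = o(h^{-1/2})$ follows because $\p_t$ hits only the smooth $(t,x_1)$-profile and the $s^1$-size factors are in the transversal phase, not in $t$ — so in fact $\p_t v_s$ is $\cO(1)$ in $L^2$, comfortably $o(h^{-1/2})$ (the weaker statement is kept because with a general smooth $t$-profile multiplied later by $h$-dependent factors one only needs this much). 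The construction for $w_s$ is identical with $s \mapsto -s$ in the weight and $\mathcal{L}_{g,a,q}$ in place of its adjoint; the sign flip changes $-\overline a$ back to a $+$ in the relevant transport equation but is otherwise cosmetic.

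The main obstacle, and the only genuinely non-routine point, is the interplay between the mollification and the error budget: one must verify that with $a$ replaced by $a_\zeta$ the \emph{interior equation error} is $o(h)$ in $L^2(Q)$ while simultaneously the \emph{replacement error} $\|(a - a_\zeta)\, \p_t(e^{-s(\beta t+x_1)}v_s)\|_{L^2(Q)}$ — which carries a factor $s \sim h^{-1}$ from differentiating the exponential weight — is also $o(h)$. Since $\|a - a_\zeta\|_{L^\infty}=o(1)$ only, the naive bound on this term is $o(1)\cdot h^{-1} \cdot h^2 = o(h)$, which works, but it is tight and must be tracked carefully; likewise the $\|\p_t a_\zeta\|$ and $\|\Delta_g a_\zeta\| = o(\zeta^{-2})$ bounds must be weighed against the $h^2$ prefactor to land at $o(h)$, forcing the precise relation between $\zeta$ and $h$. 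Everything else — eikonal solvability, transport ODEs, Gaussian concentration, patching — is classical and follows \cite{Ferreira_Kenig_Salo_Uhlmann,Cekic,Krupchyk_Uhlmann_magschr} with the harmless addition of the two inert variables $(t,x_1)$.
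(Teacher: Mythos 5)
Your high-level plan (Fermi coordinates, complex phase concentrating around $\gamma$, mollified damping coupled to $h$, patching by partition of unity, $\cO(h^\infty)$ tails from the Gaussian tube) tracks the paper's proof, and your identification of the tight error budget when replacing $a$ by $a_\zeta$ is in the right spirit. But three points are substantively off, and the second one would make the proof fail as written.

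\textbf{(1) The eikonal equation is not the standard one.} You write $\langle\nabla\theta,\nabla\theta\rangle_{g_0}=1$. After conjugating by $e^{\mp s(\beta t+x_1)}$, the term multiplying $s^2$ is $\langle\nabla_{g_0}\Theta,\nabla_{g_0}\Theta\rangle_{g_0}-(1-\beta^2)$: the two Euclidean directions $t,x_1$ contribute $(\beta^2-1)$ to the symbol. The correct eikonal equation is $\langle\nabla_{g_0}\Theta,\nabla_{g_0}\Theta\rangle_{g_0}=1-\beta^2$, and the $\beta$-dependence propagates into the phase $\Theta=\sqrt{1-\beta^2}\,(\tau+\tfrac12 H(\tau)y\cdot y)$ and into the attenuation $e^{-\sqrt{1-\beta^2}\lambda\tau}$ that ultimately appears in the ray transform; it cannot be dropped.

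\textbf{(2) The transport equation is a $\bar\partial$-problem, not an ODE along $\gamma$.} You describe the amplitude as solving ``a linear ODE along $\gamma$'' with the damping factor entering multiplicatively, and earlier you say the amplitude ``is allowed to depend on $(t,x_1)$ as a free smooth profile.'' Neither is right. Because $a$ depends on $(t,x_1)$ and not just on $x'$, the $s^1$-coefficient produces the first-order operator $\beta\p_t-\p_{x_1}+i\sqrt{1-\beta^2}\,\p_\tau$ acting on $b_0(t,x_1,\tau)$. This has a genuinely complex characteristic direction in $(t,\tau)$, and cannot be integrated along the (real) geodesic $\gamma$. The paper handles this by a linear change of variables $(t,x_1,\tau)\mapsto(\tilde t,p,r)$ that turns the transport equation into a two-dimensional $\bar\partial$-equation in $z=\tilde t+ir$, solved with the Cauchy kernel $E=\tfrac{1}{\pi z}$. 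The $(t,x_1)$-dependence of the amplitude is therefore \emph{not} a free profile: the $\tilde t$-dependence of $\Phi_{1,\zeta}$ is pinned by $a_\zeta$ through the $\bar\partial$-equation, and the only free object is an anti-holomorphic factor $\eta$ (with $\p\eta=0$), which is exactly what is later used to invert the ray transform. If one ignores this and takes a free smooth profile, the $s^1$ term in the conjugated operator does not cancel and the claimed $o(h)$ bound on the interior error fails.

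\textbf{(3) $\|\p_t v_s\|_{L^2(Q)}$ is not $\mathcal O(1)$.} You conclude $\p_t v_s=\cO(1)$ in $L^2$ because ``$\p_t$ hits only the smooth $(t,x_1)$-profile.'' But the profile involves $\Phi_{1,\zeta}$, and $\|\p_{\tilde t}\Phi_{1,\zeta}\|_{L^\infty}=o(\zeta^{-1})$ with $\zeta=h^\alpha$, $0<\alpha<\tfrac12$, so $\|\p_t b_0\|_{L^\infty}=o(h^{-\alpha})$ blows up as $h\to0$. The bound $\|\p_t v_s\|_{L^2(Q)}=o(h^{-1/2})$ is tight, not a generous weakening of $\cO(1)$, and the choice $\alpha<\tfrac12$ is exactly what makes it hold. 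This is also why the second-order terms $h^2\|\p_t^2 v_s\|_{L^2}=o(h^{2-2\alpha})=o(h)$ just barely close: the budget is genuinely constrained by the mollification rate, not merely ``inert.''
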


The proof of Theorem \ref{prop:Gaussian_beam} is very long and given in the following subsections.

\subsubsection{Preparations for the proof of Theorem \ref{prop:Gaussian_beam}}

Let $\gamma=\gamma(\tau)$ be a nontangential geodesic in the transversal manifold $(M_0,g_0)$ of length $L>0$. By following \cite[Example 9.32]{lee2012smooth}, we embed $(M_0, g_0)$ into a closed manifold $(\hat{M}_0, g_0)$ of the same dimension. We also extend $\gamma$ as a unit speed geodesic in $\hat{M}_0$. Since $\gamma$ is nontangential, we can choose $\varepsilon>0$ so that $\gamma(\tau)\in \hat{M}_0\setminus M_0$ and it does not self-intersect for $\tau \in [-2\varepsilon, 0) \cup (L, L+2\varepsilon]$.

We begin with the construction of a Gaussian beam quasimode for the conjugated operator $e^{-s(\beta t+x_1)}\mathcal{L}^\ast_{g,a,q}e^{s(\beta t+x_1)}$. 
We follow the main ideas from \cite{Cekic,DDS_Kenig_Sjo_Uhl,Krupchyk_Uhlmann_magschr} and modify the argument in accordance with the extra time variable $t$. 
Consider the CGO ansatz $v_s$ of the form 
\[
v_s(t,x_1, x';h)=e^{is\Theta(x')}b(t,x_1,x';h), 
\]
where $s=\frac{1}{h}+i\lambda$ is a complex number, and the amplitude $b$ depends implicitly on the semiclassical parameter $h$.

Since the phase function $\Theta$ is independent of $t$, we have
\begin{equation}
\label{eq:conjugated_dt}
e^{-is\Theta}a\p_t(e^{is\Theta} b)=a\p_t b \quad \text{and} \quad e^{-is\Theta}\p_t^2(e^{is\Theta} b)=\p_t^2b. 
\end{equation}
Also, as $\Theta$ is independent of $x_1$ and $g=e\oplus g_0$, we get
\begin{equation}
\label{eq:conjugated_Laplacian}
e^{-is\Theta}(-\Delta_g)e^{is\Theta} b=-\Delta_g b-is[2\langle \nabla_{g_0}\Theta, \nabla_{g_0}b(x_1, \cdot)\rangle_{g_0}+(\Delta_{g_0}\Theta)b]+s^2\langle \nabla_{g_0}\Theta, \nabla_{g_0}\Theta\rangle_{g_0}b.
\end{equation}
Using \eqref{eq:conjugated_dt} and \eqref{eq:conjugated_Laplacian}, we obtain

\begin{equation}
\label{eq:conjugated_L1}
\begin{aligned}
e^{s(\beta t+x_1)} \mathcal{L}_{g, -\overline{a}, \overline{q}-\p_t \overline{a}}e^{-s(\beta t+x_1)}v_s
= &
e^{is\Theta}[s^2(\langle \nabla_{g_0}\Theta, \nabla_{g_0}\Theta\rangle_{g_0} - (1 - \beta^2))b
\\
&+
s(2\p_{x_1}b-2\beta\p_tb - 2i\langle \nabla_{g_0}\Theta, \nabla_{g_0}b(x_1, \cdot)\rangle_{g_0} - i(\Delta_{g_0}\Theta)b+ \beta \overline{a} b)
\\
&+
\mathcal{L}_{g, -\overline{a}, \overline{q}-\p_t \overline{a}}b].
\end{aligned}
\end{equation}
The computations in \eqref{eq:conjugated_L1} suggest that in order to verify the estimates in \eqref{eq:estimate_v}, we should construct the phase function $\Theta$ and the amplitude $b$ such that they approximately solve the eikonal and transport equations appearing on the right-hand side of \eqref{eq:conjugated_L1} as multipliers of the terms $s^2$ and $s$, respectively.

The construction of the Gaussian beam quasimode $v_s$ is divided into several steps, which are addressed in the following subsections.
\begin{enumerate}
\item[(1)] Fermi coordinates near $\gamma$: In these local coordinates $x'=(\tau,y) \in M_0$ is given by its closest point $\gamma(\tau)$ to the fixed geodesic $\gamma$ and by its location $y$ in the respective geodesic plane perpendicular to $\dot\gamma(\tau)$. Due to these choices, the metric tensor $g_0$ on $\gamma$ is Euclidean up to the first order. This simplifies many subsequent computations.
\item[(2)] Eikonal equation and phase function: To satisfy estimate \eqref{eq:estimate_v} we solve the eikonal equation, which is the multiplier of $s^2$ in \eqref{eq:conjugated_L1}, up to a certain order. The solution is called the phase function $\Theta$.
\item[(3)] Transport equation and amplitude: To satisfy estimate \eqref{eq:estimate_v} we solve the transport equation, which is the multiplier of $s$ in \eqref{eq:conjugated_L1}, up to a certain order, and call the obtained solution $b$ the amplitude. Since we want to construct a smooth Gaussian beam quasimode, we shall solve the transport equation with a regularized damping coefficient $a_\zeta$ and relate the regularization parameter $\zeta$ to the semiclassical parameter $h$ with an explicit expression. We solve the transport equation via a change of variables, which converts the transport equation into a better understood $\overline{\p}$-equation. 
\item[(4)] Local verification of estimate \eqref{eq:estimate_v}: In this step we provide a proof for estimate \eqref{eq:estimate_v} in a neighborhood of a fixed point $z_0=\gamma(\tau_0)$. The key of the proof is the fact that map $x\mapsto |x|^ke^{-d|x|^2}$ is in $L^2(\R^n)$ for all $d>0$ and $k\geq 0$.
\item[(5)] Global construction of Gaussian beam quasimodes: We provide the global construction of $v_s$ by gluing together quasimodes defined along the small pieces of the geodesic.
\end{enumerate}

\subsubsection{Fermi coordinates near $\gamma$.}
We fix a point $z_0=\gamma(\tau_0)$ on $\gamma([-\varepsilon, L+\varepsilon])$ and construct the quasimode locally near $z_0$. Let $(\tau, y) \in \Omega:=\{(\tau, y)\in \R \times \R^{n-2}: |\tau-\tau_0|<\delta, |y|<\delta'\}$, $\delta, \delta'>0$, be Fermi coordinates near $z_0$. The detailed construction of these coordinates is given in \cite[Lemma 7.4]{Kenig_Salo}. Heuristically, the idea is to first choose the number $\delta>0$ to be sufficiently small, so that the geodesic segment $\gamma|_{[\tau_0-\delta,\tau_0+\delta]}$ is not self-intersecting. Then one uses the parallel transport to choose some vector fields $E_1(\tau),\ldots,E_{n-2}(\tau)$ along $\gamma$ such that the vector fields $\dot\gamma(\tau),E_1(\tau),\ldots,E_{n-2}(\tau)$ span a parallel orthonormal frame along $\gamma$. Due to the choice of $\delta$, as well as the inverse function theorem, there exists $\delta'>0$ such that the map
$
F(\tau,y)=\exp_{\gamma(\tau)}\left(y^\alpha E_\alpha(\tau) \right)
$
is a diffeomorphism in the set $\Omega$. Here $\exp$ is the exponential map of $(M_0,g_0)$ and $\alpha\in \{1,\ldots,n-2\}$.
%
%
%

We note that near $z_0=\gamma(\tau_0)$ the trace of the geodesic $\gamma$ is given by the set $\Gamma=\{(\tau, 0): |\tau-\tau_0|<\delta\}$. 
Due to this construction, we get
\begin{equation}
\label{eq:g0_in_Fermi_coordinates}
g_0^{jk}(\tau,0)=\delta^{jk} \quad \text{and} \quad  \p_{y_l}g_0^{jk}(\tau, 0)=0.
\end{equation}
Hence, by Taylor's theorem, for small $|y|$ we have
\begin{equation}
\label{eq:g0_near_geo}
g_0^{jk}(\tau, y)=\delta^{jk}+\mathcal{O}(|y|^2).
\end{equation}

In these coordinates the Gaussian beam ansatz takes the form
\begin{equation}
\label{eq:Gaussian_beam_form}
v_s(t, x_1, \tau, y)=e^{is\Theta(\tau,y)} b(t, x_1, \tau, y;h), 
\end{equation}
and our aim is to find the phase function $\Theta \in C^\infty(\Omega, \C)$ that satisfies
\begin{equation}
\label{eq:property_of_phi}
\Im \Theta \ge 0, \quad \Im \Theta|_\Gamma=0, \quad \Im \Theta(\tau, y) 
\text{ is bi-Lipschitz equivalent to } 
|y|^2,
\end{equation}
and an amplitude $b\in C^\infty(\R \times \R \times \Omega, \C)$ such that $\supp(b(t, x_1, \cdot)) \subset \{|y|<\delta'/2\}$. 
In particular, the name ``Gaussian beam" comes from \eqref{eq:property_of_phi}.


\subsubsection{Eikonal equation and phase function.}
\label{ssec:phase_function}
Our goal in this step is to find a phase function $\Theta$ by solving the eikonal equation $\langle \nabla_{g_0}\Theta, \nabla_{g_0}\Theta\rangle_{g_0}=1-\beta^2$ up to order $|y|^3$ on $\Gamma$ by arguing similarly as in \cite{Ferreira_Kur_Las_Salo,KKL_book,Krupchyk_Uhlmann_magschr,Ralston_1977,Ralston_1982}. That is, we find a function $\Theta(\tau, y)\in C^\infty(\Omega, \C)$ that satisfies


\begin{equation}
\label{eq:eikonel_eq}
\langle \nabla_{g_0}\Theta, \nabla_{g_0}\Theta\rangle_{g_0}-(1-\beta^2)=\mathcal{O}(|y|^3), \quad y\to 0,
\end{equation}
and 
\begin{equation}
\label{eq:imaginarypart_phi}
\Im \Theta(\tau,y)\ge d|y|^2
\end{equation}
for some constant $d>0$ depending on $\beta$. Equation \eqref{eq:eikonel_eq}, combined with a scaling in the semiclassical parameter $h$, will be used in Subsection \ref{Ssec:local_estimate} to prove estimate \eqref{eq:estimate_v}.  

In order to utilize the Taylor expansion \eqref{eq:g0_near_geo} for the metric $g_0$, we look for a function $\Theta$ of the form $\Theta=\Theta_0+\Theta_1+\Theta_2$, where
\[
\Theta_j(\tau,y)=\sum_{|\alpha|=j}\frac{\Theta_{j,\alpha}(\tau)}{\alpha!}y^\alpha, \quad j=0,1,2,
\]
are the homogeneous polynomials in the $y$-variable.
We also write $g^{jk}_0=g_{0,0}^{jk}+g_{0,1}^{jk}+g_{0,2}^{jk}+r^{jk}_3$, where 
\[
g_{0,l}^{jk}(\tau, y)=\sum_{|\mu|=l} \frac{g_{0,l,\mu}^{jk}(\tau)}{\mu!}y^\mu, \quad l=0,1,2,
\]
and $r_3=\cO(|y|^3)$ is the remainder in Taylor's theorem. By the properties of Fermi coordinates \eqref{eq:g0_in_Fermi_coordinates}, we have $g_{0,0}^{jk}=\delta^{jk}$ and $g_{0,1}^{jk}=0$. We then choose accordingly that 
\begin{equation}
\label{eq:def_Theta0_Theta1}
\Theta_0(\tau, y)=\sqrt{1-\beta^2}\tau \quad \text{ and } \quad \Theta_1(\tau, y)=0.
\end{equation}

Let us next find $\Theta_2$. To this end, we write the metric $g^{jk}_0= \delta^{jk}+g_{0,2}^{jk}+\cO(|y|^3)$. With the understanding that $j,k$ run from 1 to $n-1$ and $\alpha, \mu$ run from 2 to $n-1$, we see that 
\begin{align*}
\langle \nabla_{g_0}\Theta, \nabla_{g_0}\Theta\rangle_{g_0}-(1-\beta^2) = 
[2\sqrt{1-\beta^2}\p_\tau\Theta_2+\nabla_y\Theta_2\cdot \nabla_y\Theta_2+(1-\beta^2)g_{0,2}^{11}]+\cO(|y|^3).
\end{align*}


Similar to \cite{Cekic, Ferreira_Kur_Las_Salo, Kenig_Salo}, our aim is to find a function $\Theta_2$ such that
\begin{equation}
\label{eq:Theta_2_eq}
2\sqrt{1-\beta^2}\p_\tau\Theta_2+\nabla_y\Theta_2\cdot \nabla_y\Theta_2+(1-\beta^2)g_{0,2}^{11}=0.
\end{equation}
Due to our previous choice for the form of $\Theta_2$, we have
\[
\Theta_2(\tau, y)=\frac{1}{2}\sqrt{1-\beta^2}H(\tau)y\cdot y, 
\]
and in order to satisfy \eqref{eq:property_of_phi}, we seek to obtain a smooth complex-valued symmetric matrix $H(\tau)$ with a positive definite imaginary part. Since each term of \eqref{eq:Theta_2_eq} is quadratic in $y$, it is sufficient for $H(\tau)$ to satisfy the following matrix initial value problem, called the Ricatti equation
\begin{equation}
\label{eq:Riccati_eq}
\dot{H}(\tau)+H(\tau)^2=F(\tau), \quad H(\tau_0)=H_0, \quad \text{ for } \tau \in \R,
\end{equation}
where $F(\tau)$ is a symmetric matrix such that $g_{0,2}^{11}(\tau, y)=-F(\tau)y\cdot y$ in $(\tau_0-\delta,\tau_0+\delta)$. If we choose $H_0$ to be a symmetric complex-valued matrix such that $\Im(H_0)$ is positive definite, then \eqref{eq:Riccati_eq} has a unique smooth symmetric complex-valued solution $H(\tau)$ with $\Im H(\tau)$ positive definite, see \cite[Lemma 2.56]{KKL_book} for details. 

Therefore, the phase function $\Theta$ reads
\begin{equation}
\label{eq:phi}
\Theta(\tau, y)=\sqrt{1-\beta^2}(\tau+\frac{1}{2}H(\tau)y\cdot y).
\end{equation}
Due to the compactness and the positive definiteness of $\Im(H(\tau))$, the function $\Theta$ satisfies the properties in \eqref{eq:property_of_phi}.

\subsubsection{Transport equation and the amplitude.}
\label{ssec:amplitude}

We next seek an amplitude $b$ of the form 
\begin{equation}
\label{eq:amp_form_regu}
b(t, x_1, \tau, y; h, \zeta)=h^{-\frac{n-2}{4}}b_0(t, x_1, \tau; \zeta)\chi(y/\delta'),
\end{equation}
where $b_0\in C^\infty (\R_t\times \R_{x_1} \times [\tau_0-\delta, \tau_0+\delta])$ is independent of $y$ and satisfies the approximate transport equation
\begin{equation}
\label{eq:condition_b0_regu}
2\p_{x_1}b_0-2\beta\p_tb_0 - 2i\langle \nabla_{g_0}\Theta, \nabla_{g_0}b_0(x_1, \cdot)\rangle_{g_0} - i(\Delta_{g_0}\Theta)b_0 + \beta \overline{a_\zeta} b_0 =\mathcal{O}(|y|\zeta^{-1})
\end{equation}
as $y,\zeta \to 0$. The cut-off function $\chi \in C^\infty_0(\R^{n-2})$ in \eqref{eq:amp_form_regu} is chosen such that $\chi=1$ for $|y|\le 1/4$ and $\chi=0$ for $|y|\ge 1/2$.
Here $\zeta$ is the regularization parameter and $a_\zeta$ is the regularized damping coefficient given in Proposition \ref{prop:regularization}. Instead of the transport equation appearing in the coefficient of $s$ in \eqref{eq:conjugated_L1}, we solve \eqref{eq:condition_b0_regu} and obtain a smooth Gaussian beam quasimode. Eventually, we want the Gaussian beam to only depend on the semiclassical parameter $h$. Therefore, at the end of this subsection we will give an explicit relation between $h$ and $\zeta$.  We shall make the expression $|y|\zeta^{-1}$ rigorous in the next subsection when we prove estimate \eqref{eq:estimate_v}. Also, we will address the effect of the change from $a$ to $a_\zeta$ on the proof of \eqref{eq:estimate_v}. 
Equation \eqref{eq:condition_b0_regu}, combined with a scaling in the semiclassical parameter $h$, will be used in Subsection \ref{Ssec:local_estimate} to prove the third estimate in \eqref{eq:estimate_v}.  

In order to find a function $b_0$ such that \eqref{eq:condition_b0_regu} holds, we first compute $\langle \nabla_{g_0}\Theta, \nabla_{g_0}b_0(x_1, \cdot)\rangle_{g_0}$. It follows from \eqref{eq:phi} that
\begin{equation}
\label{eq:dtau_phi}
\p_\tau\Theta(\tau, y)=\sqrt{1-\beta^2}+\mathcal{O}(|y|^2).
\end{equation} 
Therefore, we get from \eqref{eq:g0_near_geo} that
\begin{equation}
\label{eq:dphidb0}
\langle \nabla_{g_0}\Theta, \nabla_{g_0}b_0(x_1, \cdot)\rangle_{g_0}=\sqrt{1-\beta^2}(\p_\tau b_0+H(\tau)y\cdot \p_yb_0)+\mathcal{O}(|y|^2)\p_\tau b_0+
\mathcal{O}(|y|^2)\p_y b_0.
\end{equation}

We next compute $\Delta_{g_0}\Theta$ near the geodesic $\gamma$. Using \eqref{eq:g0_near_geo} and \eqref{eq:phi}, we have
\[
(\Delta_{g_0}\Theta)(\tau, 0) = \sqrt{1-\beta^2}\delta^{jk}H_{jk} =  \sqrt{1-\beta^2} \tr H(\tau),
\]
which implies
\begin{equation}
\label{eq:lap_phi_geo}
(\Delta_{g_0}\Theta)(\tau, y)=\sqrt{1-\beta^2} \tr H(\tau)+\mathcal{O}(|y|).
\end{equation}

Finally, we Taylor expand the coefficients appearing on the left-hand side of \eqref{eq:condition_b0_regu}. Writing
\begin{align*}
a_\zeta(t, x_1, \tau, y)
=a_\zeta(t, x_1, \tau, 0)+\int_0^1 (\nabla_ya_\zeta(t, x_1, \tau, ys))yds,
\end{align*}
and utilizing \eqref{eq:est_diff_regu}, we get
\begin{equation}
\label{eq:Taylor_damping}
a_\zeta(t, x_1, \tau, y)=a_\zeta(t, x_1, \tau, 0)+\cO(|y|\zeta^{-1}).
\end{equation}

To achieve \eqref{eq:condition_b0_regu}, we require that $b_0(t, x_1, \tau; \zeta)$ satisfies
\begin{equation}
\label{eq:transport_a0_regu}
(\beta \p_t-\p_{x_1}+i\sqrt{1-\beta^2}\p_\tau)b_0=\frac{1}{2}[-i\sqrt{1-\beta^2}\tr H(\tau)+\beta \overline{a_\zeta}(t, x_1, \tau, 0)]b_0.
\end{equation}
To solve this equation we perform a change of variables and write the left-hand side of \eqref{eq:transport_a0_regu} as a $\overline{\p}$-equation. To that end, let $S: \R^3\to \R^3$ be an invertible linear function such that for a fixed $\beta \in (\frac{1}{\sqrt{3}}, 1)$, its inverse function is
\begin{equation}
\label{eq:change_variable}
S^{-1}(t,x_1,\tau)
=
\left(\frac{1}{\beta}t, x_1+\frac{1}{\beta}t, \frac{1}{\sqrt{1-\beta^2}}\tau\right):=(\tT, p, r).
\end{equation}
Then we get from \eqref{eq:change_variable} that
\begin{equation}
\label{eq:chain_rule}
\p_t=\frac{\p}{\p\tT}\frac{\p\tT}{\p t}+ \frac{\p}{\p p}\frac{\p p}{\p t}+ \frac{\p}{\p \tau}\frac{\p \tau}{\p t}=\frac{1}{\beta}(\p_{\tT}+\p_p) \quad \text{ and } \quad \p_{x_1}= \frac{\p}{\p\tT}\frac{\p\tT}{\p x_1}+\frac{\p}{\p p}\frac{\p p}{ \p x_1} + \frac{\p}{\p \tau}\frac{\p \tau}{\p x_1}=\p_p.
\end{equation}
By substituting \eqref{eq:change_variable} and \eqref{eq:chain_rule} into \eqref{eq:transport_a0_regu}, we obtain the equation
\begin{equation}
\label{eq:transport_a0_change_variable_regu}
(\p_{\tT}+i\p_r)b_0'=\frac{1}{2}[-i\sqrt{1-\beta^2}\tr H(\sqrt{1-\beta^2}r)+ \beta \overline{a_\zeta}'(\tT, p, r, 0)]b_0',
\end{equation}
where $a_\zeta'=a_\zeta\circ S$ and $b_0'=b_0\circ S$.


Writing
$
\overline{\p} = \frac{1}{2}(\p_{\tT}+i\p_r),
$
we look for a solution to \eqref{eq:transport_a0_change_variable_regu} of the form $b_0'(\tT, p, r; \zeta)=e^{\Phi_{1, \zeta}(\tT, p, r)+f_1(r)}$. By a direct computation, we see that in order for such a function $b_0'$ to solve \eqref{eq:transport_a0_change_variable_regu}, the functions $\Phi_{1, \zeta}$ and $f_1$ need to satisfy
\begin{equation}
\label{eq:dPhi_regu}
\overline{\p} \Phi_{1, \zeta}(\tT, p, r) = \frac{\beta}{4}  \overline{a_\zeta}'(\tT, p, r, 0)
=
\frac{\beta}{4}  \overline{a_\zeta}'(\tT, p, \gamma(r))
\end{equation}
and
\begin{equation} 
\label{eq:dtau_f}
\p_r f_1= -\frac{\sqrt{1-\beta^2}}{2}\tr H(\sqrt{1-\beta^2}r).
\end{equation}
Note that $f_1$ can be obtained by integrating the right-hand side of \eqref{eq:dtau_f} with respect to $r$. 

In order to solve the $\overline{\p}$-equation \eqref{eq:dPhi_regu}, we use the fundamental solution $E(\tT,r)=\frac{1}{\pi(\tT+ir)}$ of the $\overline{\p}$-operator \cite[Section 5.4]{Friedlander_Joshi} to take
\begin{equation}
\label{eq:fund_solution}
\Phi_{1, \zeta}(\tT, p, r)=\frac{\beta}{4} (E\ast  \overline{a_\zeta}')(\tT, p, \gamma(r)).
\end{equation}
While forming the convolution over the complex variable $\tT+ir$, we note that by Proposition \ref{prop:regularization}, the function $a_\zeta'$ is compactly supported in $\R^2 \times M_0^{\mathrm{int}}$. Since $\gamma$ is a nontangential geodesic in $(M_0,g_0)$, we may assume without loss of generality that the map $(\tT,p, r) \mapsto \overline{a_\zeta}'(\tT, p, \gamma(r))$ is smooth and compactly supported in the entire $(\tT,p, r)$-space so that estimate \eqref{eq:est_diff_regu} still holds. 
Therefore, we have obtained a $C^\infty$-smooth solution $b_0'(\tT, p, r; \zeta)=e^{\Phi_{1, \zeta}(\tT, p, r)+f_1(r)}$ of \eqref{eq:transport_a0_change_variable_regu} defined in the whole $(\tT, p, r)$-space. 

To verify that $b_0$ satisfies \eqref{eq:condition_b0_regu}, we need to estimate $b_0(\cdot; \zeta)$, as well as its first and second order derivatives over the set $[0,\frac{T}{\beta}] \times \overline{J}_p \times [r_0-\delta, r_0+\delta]$, where $J_p\subset \R$ is an open and bounded interval such that the respective $p$-coordinate of each point in $Q$ is in $J_p$. Since the function $\overline{a}_\zeta$ is supported in some open and bounded set of $\R^2\times M_0^{\mathrm{int}}$, as given in Proposition \ref{prop:regularization}, there exists some compact set $K \subset \R^2$ such that the following inequality holds for every $(\tT, p, r) \in [0,\frac{T}{\beta}] \times \overline{J}_p \times [0, \frac{L}{\sqrt{1-\beta^2}}]$,
\begin{equation}
\label{eq:est_for_Phi}
|\Phi_{1,\zeta}(\tT, p, r)|
\leq 
\int_{K}|E(\tT-t,r-s)||\overline{a}'_\zeta(t, p, s)|dtds
\leq 
\|E_{(\tT,r)}\|_{L^1(K)}\|\overline{a}'_\zeta\|_{L^\infty},
\end{equation}
where $E_{\tT,r}(t,s)=E(\tT-t,r-s)$. Due to the local integrability of $E$, the term $\|E_{(\tT,r)}\|_{L^1(K)}$ has a uniform bound for all $(\tT,r)\in [0,\frac{T}{\beta}] \times [0, \frac{L}{\sqrt{1-\beta^2}}]$. Then it follows from estimate \eqref{eq:est_diff_regu} that $\|\Phi_{1,\zeta}\|_{L^\infty}=\cO(1)$. Furthermore, by replacing the function $\overline{a}_\zeta$ with $\p_{\tT} \overline{a}_\zeta$ in \eqref{eq:est_for_Phi} and utilizing \eqref{eq:est_diff_regu} again, we get $\|\p_{\tT} \Phi_{1,\zeta}\|_{L^\infty}=o(\zeta^{-1})$. We also obtain the following estimates by using similar arguments:
\begin{align*}
\|\nabla_{g_0} \Phi_{1,\zeta}\|_{L^\infty}, \|\p_{p} \Phi_{1,\zeta}\|_{L^\infty}=o(\zeta^{-1}), \quad 
\|\Delta_{g_0} \Phi_{1,\zeta}\|_{L^\infty},\|\p_{\tilde t}^2 \Phi_{1,\zeta}\|_{L^\infty}=o(\zeta^{-2}), \quad  \zeta\to 0.
\end{align*}

To complete the verification of \eqref{eq:condition_b0_regu}, we connect the semiclassical parameter $h$ and the regularization parameter $\zeta$ by setting $\zeta=h^\alpha$, $0<\alpha<\frac{1}{2}$. Note that the change of coordinates function $S$, given by \eqref{eq:change_variable}, is independent of $\zeta$. Hence, this choice of $\zeta$, in conjunction with estimates
\eqref{eq:est_diff_regu}, \eqref{eq:fund_solution}, and $\|f_1\|_{L^\infty}=\cO(1)$, yields
\begin{equation}
\label{eq:bound_deri_b0_h_old}
\begin{aligned}
&\|b_0(\cdot; h)\|_{L^\infty}=\cO(1),
\quad \|\nabla_{g_0} b_0(\cdot; h)\|_{L^\infty},\|\p_t b_0(\cdot; h)\|_{L^\infty}, \|\p_{x_1} b_0(\cdot; h)\|_{L^\infty}=o(h^{-\alpha}), 
\\
&\|\Delta_{g_0} b_0(\cdot; h)\|_{L^\infty},\|\p_t^2 b_0(\cdot; h)\|_{L^\infty}=o(h^{-2\alpha}), \quad  h\to 0.
\end{aligned}
\end{equation}

By substituting \eqref{eq:dtau_phi}--\eqref{eq:transport_a0_regu} into the left-hand side of \eqref{eq:condition_b0_regu}, we get from \eqref{eq:bound_deri_b0_h_old} that
\begin{align*}
&2\p_{x_1}b_0-2\beta\p_tb_0 - 2i\langle \nabla_{g_0}\Theta, \nabla_{g_0}b_0(x_1, \cdot)\rangle_{g_0} - i(\Delta_{g_0}\Theta)b_0 + \beta \overline{a_\zeta} b_0
\\
&= -2i\sqrt{1-\beta^2}H(\tau)y\cdot \p_yb_0+\cO(|y|^2)\p_\tau b_0+\mathcal{O}(|y|^2)\p_y b_0+\cO(|y|)+\cO(|y|h^{-\alpha}).
\\
&= \cO(|y|h^{-\alpha}).
\end{align*}
Thus, equation \eqref{eq:condition_b0_regu} is verified.


\subsubsection{Local verification of estimate  $\eqref{eq:estimate_v}$.}
\label{Ssec:local_estimate}

Let us now verify that the estimates in \eqref{eq:estimate_v} hold for the quasimode
\begin{equation}
\label{eq:beam_form_regu}
v_s(\tT, p, r, y;h)=e^{is\Theta(\sqrt{1-\beta^2}r, y)}b'(\tT, p, r,y; h)=e^{is\Theta(\sqrt{1-\beta^2}r, y)}h^{-\frac{n-2}{4}}b_0'(\tT, p, r; h)\chi(y/\delta')
\end{equation}
in the open set $(0,\frac{T}{\beta}) \times J_p \times \Omega$ of $Q$, where $\Omega\subset M_0$ is the domain of Fermi coordinates near the point $z_0=\gamma(\tau_0)$. 
To establish this, we shall need the following estimate for any 
$k\geq 0$:
\begin{equation}
\label{eq:L2_power_y}
\begin{split}
\|h^{-\frac{n-2}{4}} |y|^k e^{-\frac{\Im \Theta}{h}}\|_{L^2(|y| \le \delta'/2)}
&\leq 
\|h^{-\frac{n-2}{4}} |y|^k e^{-\frac{d}{h}|y|^2}\|_{L^2(|y| \le \delta'/2)}
\\
& \leq 
\bigg(\int_{\R^{n-2}} h^{k}|z|^{2k} e^{-2d|z|^2}dz\bigg)^{1/2} 
\\
&=\cO(h^{k/2}), \quad h \to 0.
\end{split}
\end{equation}
Here we applied estimate \eqref{eq:imaginarypart_phi} and the change of variables $z=h^{-1/2}y$.

We are now ready to start verifying \eqref{eq:estimate_v} locally. To that end, we use \eqref{eq:imaginarypart_phi}, \eqref{eq:bound_deri_b0_h_old}, and \eqref{eq:L2_power_y} with $k=0$ to get
\begin{equation}
\label{eq:estimate_v_int}
\begin{aligned}
\|v_s\|_{L^2([0,\frac{T}{\beta}]\times \overline{J}_p \times \Omega)} &\le \|b_0'\|_{L^\infty([0, \frac{T}{\beta}]\times \overline{J}_p\times [r_0-\delta, r_0+\delta])} \|e^{is\Theta}h^{-\frac{n-2}{4}}\chi(y/\delta')\|_{L^2([0, \frac{T}{\beta}]\times \overline{J}_p \times \Omega)}
\\
&\le \mathcal{O}(1)\|h^{-\frac{n-2}{4}}e^{-\frac{d}{h} |y|^2}\|_{L^2(|y|\le \delta'/2)}=\mathcal{O}(1), \quad h\to 0.
\end{aligned}
\end{equation}

Let us next estimate $\|\p_tv_s\|_{L^2([0, T]\times \overline{J}_p \times \Omega)}$. By utilizing \eqref{eq:bound_deri_b0_h_old}, \eqref{eq:beam_form_regu}, and 
$\zeta=h^{\alpha}$, $0<\alpha <\frac{1}{2}$, we obtain
\begin{equation}
\label{eq:est_dt}
\|\p_tv_s\|_{L^2([0, \frac{T}{\beta}]\times \overline{J}_p\times \Omega)}=o(h^{-1/2}), \quad h\to 0.
\end{equation}

We now proceed to estimate $\|e^{s(\beta t+x_1)}h^2 \mathcal{L}_{g,-\overline{a},\overline{q}-\p_t\overline{a}}e^{-s(\beta t+x_1)}v_s\|_{L^2([0, \frac{T}{\beta}]\times J_p\times \Omega)}$ by estimating each term on the right-hand side of \eqref{eq:conjugated_L1} independently. Let us start with the first term. By applying
\eqref{eq:eikonel_eq},  \eqref{eq:imaginarypart_phi}, \eqref{eq:bound_deri_b0_h_old}, and \eqref{eq:L2_power_y} with $k=3$, we get
\begin{equation}
\label{eq:est_first_term}
\begin{aligned}
&h^2\|e^{is\Theta}s^2 (\langle \nabla_{g_0}\Theta, \nabla_{g_0}\Theta\rangle_{g_0} - (1 - \beta^2))b\|_{L^2([0, \frac{T}{\beta}]\times  \overline{J}_p \times \Omega)}
\\
&=h^2\|e^{is\Theta}s^2 h^{-\frac{n-2}{4}}(\langle \nabla_{g_0}\Theta, \nabla_{g_0}\Theta\rangle_{g_0} - (1 - \beta^2))b_0'\chi(y/\delta')\|_{L^2([0, \frac{T}{\beta}]\times  \overline{J}_p \times \Omega)}
\\
&\le \cO(1) \|h^{-\frac{n-2}{4}} |y|^3 e^{-\frac{d}{h} |y|^2}\|_{L^2(|y| \le \delta'/2)} =\cO(h^{3/2}), \quad  h \to 0.
\end{aligned}
\end{equation}

We next consider the second term on the right-hand side of \eqref{eq:conjugated_L1}. From a direct computation, we see that
\[
|e^{is\Theta}|=e^{-\frac{1}{h}\Im \Theta}e^{-\lambda \Re \Theta}=e^{-\frac{\sqrt{1-\beta^2}}{2h} \Im H(\tau)y\cdot y}e^{-\lambda\sqrt{1-\beta^2}\tau}e^{-\lambda \cO(|y|^2)}.
\]
We observe that $e^{-\frac{1}{h}}=\cO(h^\infty)$. Therefore, on the support of $ \nabla_{g_0}\chi(y/\delta')$ we deduce from \eqref{eq:imaginarypart_phi} that
\[
|e^{is\Theta}|
\leq  
e^{-\frac{\tilde d}{h}} \quad \text{for some } \tilde d>0.
\]
Thus, using estimates \eqref{eq:condition_b0_regu} and  \eqref{eq:L2_power_y} with $k=1$, $\alpha\in (0,\frac{1}{2})$, and the triangle inequality, we have
\begin{equation}
\label{eq:est_second_term}
\begin{aligned}
&h^2 \|e^{is\Theta}s(2\p_{x_1}b-2\beta\p_tb - 2i\langle \nabla_{g_0}\Theta, \nabla_{g_0}b(x_1, \cdot)\rangle_{g_0} - i(\Delta_{g_0}\Theta)b+ \beta \overline{a_\zeta} b)\|_{L^2([0, \frac{T}{\beta}]\times  \overline{J}_p \times \Omega)}
\\
&\le \cO(h) \|e^{is\Theta}h^{-\frac{n-2}{4}} [|y|h^{-\alpha}\chi(y/\delta')-2i\n{\nabla_{g_0}\Theta, \nabla_{g_0}\chi(y/\delta')}_{g_0}]\|_{L^2([0, \frac{T}{\beta}]\times  \overline{J}_p \times \Omega)}
\\
& \le \cO(h) \|h^{-\frac{n-2}{4}} |y| h^{-\alpha} e^{-\frac{d}{h}|y|^2}\|_{L^2(|y| \le \delta'/2)} +\cO(e^{-\frac{\tilde{d}}{h} })
\\
&=\cO(h^{3/2-\alpha})=o(h), \quad  h \to 0.
\end{aligned}
\end{equation}
We want to emphasize that in the second term on the right-hand side of \eqref{eq:conjugated_L1} we have the damping coefficient $\bar a$ instead of its smooth approximation $\overline{a_\zeta}$, which appeared in \eqref{eq:est_second_term}. To medicate this discrepancy, we use estimates \eqref{eq:est_diff_regu} and \eqref{eq:L2_power_y} with $k=0$ to get
\begin{equation}
\label{eq:est_second_term_regu}
\begin{aligned}
h^2 \|e^{is\Theta}s \beta (\overline{a}-\overline{a_\zeta}) b\|_{L^2([0, \frac{T}{\beta}]\times  \overline{J}_p \times \Omega)} &= \cO(h) \|e^{is\Theta} (\overline{a}-\overline{a_\zeta}) h^{-\frac{n-2}{4}}b_0'\chi(y/\delta')\|_{L^2([0, \frac{T}{\beta}]\times  \overline{J}_p \times \Omega)}
\\
&\le \cO(h)\|\overline{a}-\overline{a_\zeta}\|_{L^\infty([0, \frac{T}{\beta}]\times  \overline{J}_p \times \Omega)} \|h^{-\frac{n-2}{4}} e^{-\frac{d}{h} |y|^2}\|_{L^2(|y| \le \delta'/2)}
\\
&=o(h),\quad  h \to 0.
\end{aligned}
\end{equation}

Finally, we estimate the third term on the right-hand side of \eqref{eq:conjugated_L1}. To that end, we utilize estimates \eqref{eq:bound_deri_b0_h_old} and \eqref{eq:L2_power_y} with $k=0$ to obtain
\begin{equation}
\label{eq:est_dtsquare}
h^2 \|e^{is\Theta}\p_t^2b\|_{L^2([0, \frac{T}{\beta}]\times  \overline{J}_p \times \Omega)}=o(h^{2(1-\alpha)})=o(h), \quad h\to 0.
\end{equation}
To estimate the term involving the $\Delta_g$, we incorporate estimates \eqref{eq:amp_form_regu}, \eqref{eq:bound_deri_b0_h_old}, and \eqref{eq:L2_power_y} with $k=0$, as well as the triangle inequality, to get
\begin{equation}
\label{eq:est_laplacian}
\begin{aligned}
h^2 &\|e^{is\Theta}(-\Delta_gb)\|_{L^2([0, \frac{T}{\beta}]\times  \overline{J}_p \times \Omega)}
\\
\le &\cO(h^2)\|h^{-\frac{n-2}{4}} e^{is\Theta}  \chi(y/\delta')\Delta_g b_0'\|_{L^2([0, \frac{T}{\beta}]\times  \overline{J}_p \times \Omega)}
\\
&+\cO(h^2)\|h^{-\frac{n-2}{4}} e^{is\Theta} [b_0' \Delta_g  \chi(y/\delta')+2\n{\nabla_g b_0', \nabla_g \chi(y/\delta')}_g]\|_{L^2([0, \frac{T}{\beta}]\times  \overline{J}_p \times \Omega)}
\\
\le &\cO(h^2)\left(\|h^{-\frac{n-2}{4}} e^{-\frac{d}{h} |y|^2}h^{-2\alpha}\|_{L^2(|y| \le \delta'/2)} + \cO(e^{-\frac{\tilde{d}}{h}})\right)
\\
= &\cO(h^{2(1-\alpha)})+\cO(e^{-\frac{\tilde{d}}{h}})=o(h), \quad h\to 0. 
\end{aligned}
\end{equation}
For the lower order terms, it follows from \eqref{eq:bound_deri_b0_h_old} that
\begin{equation}
\label{eq:est_first_and_zero}
h^2 \|e^{is\Theta}(-\overline{a}\p_tb+(\overline{q}-\p_t\overline{a})b)\|_{L^2([0, \frac{T}{\beta}]\times  \overline{J}_p \times \Omega)}=o(h^{2-\alpha})=o(h^{3/2}), \quad h\to 0.
\end{equation}
Therefore, by combining estimates \eqref{eq:est_first_term}--\eqref{eq:est_first_and_zero}, we conclude from  \eqref{eq:conjugated_L1} that 
\begin{equation}
\label{eq:est_op_regu}
\|e^{s(\beta t+x_1)}h^2\mathcal{L}_{g, -\overline{a}, \overline{q}-\p_t\overline{a}}e^{-s(\beta t+x_1)}v_s\|_{L^2([0, \frac{T}{\beta}]\times  \overline{J}_p \times \Omega)}=o(h), \quad h\to 0.
\end{equation}
This completes the verification of estimate \eqref{eq:estimate_v} locally in the set $(0, \frac{T}{\beta})\times J_p\times \Omega$.

Before proceeding to the global construction, we need an estimate for $\|v(t, x_1, \cdot)\|_{L^2(\pM_0)}$ for later purposes. If $\Omega$ contains a boundary point $x_0=(\tau_0, 0) \in \p M_0$, then $\dot \gamma(\tau_0)$ is transversal to $\p M_0$. Let $\rho$ be a boundary defining function for $M_0$ so that $\p M_0$ is given by the level set $\rho(r, y)=0$ near $x_0$, and $\nabla_{g_0} \rho$ is normal to $\p M_0$ at $x_0$. These imply $\p_\tau \rho(x_0)\ne 0$. By the implicit function theorem, there exists a smooth function $y\mapsto r(y)$ near 0 such that $\p M_0$ near $x_0$ is given by $\{(r(y), y):|y|<r_0\}$ for some $r_0>0$ small, see the proof of \cite[Proposition 7.5]{Kenig_Salo}. 

Using \eqref{eq:property_of_phi}, \eqref{eq:imaginarypart_phi}, and \eqref{eq:bound_deri_b0_h_old}, we see that there exists a constant $C$ such that
\begin{equation}
\label{eq:bound_of_v}
|v_s(t, x_1, \tau, y;h)|\le Ch^{-\frac{n-2}{4}}e^{-\frac{d}{h} |y|^2}\chi(y/\delta').
\end{equation}
Thus, after shrinking the set $\Omega$ if necessary and using \eqref{eq:L2_power_y} with $k=0$ along with \eqref{eq:bound_of_v}, we get
\begin{equation}
\label{eq:est_v_bdyM}
\begin{aligned}
\|v(t, x_1, \cdot)\|^2_{L^2(\pM_0 \cap \Omega)} &= \int_{|y|_e<r_0} |v(t, x_1, r(y), y)|^2dS_g(y)
\\
&\le \cO(1) \int_{\R^{n-2}} h^{-\frac{n-2}{2}}e^{-\frac{2d}{h} |y|^2} dy =\cO(1), \quad h\to 0.
\end{aligned}
\end{equation}

\subsubsection{Global construction of the Gaussian beam quasimodes via gluing.}
\label{sssec:global_const}
Finally, we glue together the quasimodes defined along small pieces of the geodesic $\gamma$ to obtain the quasimode $v_s$ in 
$\R^2\times M_0^{\mathrm{int}}$. Since $\hat{M}_0$ is compact and $\gamma(r):(-2\varepsilon, \frac{L}{\sqrt{1-\beta^2}}+2\varepsilon)\to \hat{M}_0$ is a nontangential geodesic that is not a loop, 
it follows from \cite[Lemma 7.2]{Kenig_Salo} and the choice of $\varepsilon>0$ that the curve $\gamma|_{(-2\varepsilon, \frac{L}{\sqrt{1-\beta^2}}+2\varepsilon)}$ 
has finitely many self intersection times $r_\ell\geq 0$ with $\ell \in \{1,\ldots,R\}$ and 
\[
-\varepsilon =r_0< r_1< \dots<r_R<r_{R+1} =\frac{L}{\sqrt{1-\beta^2}}+\varepsilon.
\]
Due to 
\cite[Lemma 7.4]{Kenig_Salo},
there exists an open cover $\{(\Omega_\ell, \kappa_\ell)_{\ell=0}^{R+1}\}$ of $\gamma([-\varepsilon, \frac{L}{\sqrt{1-\beta^2}}+\varepsilon])$ consisting of Fermi coordinate neighborhoods that have the following properties:
\begin{enumerate}
\item[(1)] $ \kappa_\ell(\Omega_\ell)=I_\ell\times B$, where $I_\ell$ are open intervals and $B=B(0, \delta')$ is an open ball in $\R^{n-2}$. Here $\delta'>0$ can be taken arbitrarily small and the same for each $\Omega_\ell$.
\item[(2)] $ \kappa_\ell(\gamma(r))=(r, 0)$ for $r \in I_\ell$.
\item[(3)] $r_\ell$ only belongs to $I_\ell$ and $\overline{I_\ell}\cap \overline{I_k}=\emptyset$ unless $|\ell-k|\le 1$.
\item[(4)] $ \kappa_\ell= \kappa_k$ on $\kappa_\ell^{-1}((I_\ell\cap I_k)\times B)$.
\end{enumerate}
In particular, 
the intervals $I_0$ and $I_{R+1}$ are chosen in such a way that they do not contain any self-intersection times.
In the case when $\gamma$ does not self-intersect, there is a single coordinate neighborhood of $\gamma|_{[-\varepsilon, \frac{L}{\sqrt{1-\beta^2}}+\varepsilon]}$ such that (1) and (2) are satisfied.

We proceed as follows to construct the quasimode $v_s$. Suppose first that $\gamma$ does not self-intersect at $r=0$. Using the procedure from the earlier part of this proof, we find a quasimode
\[
v_s^{(0)}(\tT, p, r, y;h)=h^{-\frac{n-2}{4}}e^{is\Theta^{(0)}(\sqrt{1-\beta^2}r, y)}e^{\Phi_{1, h}(\tT, p, r)+f_1(r)}\chi(y/\delta')
\]
in $\Omega_0$ with some fixed initial conditions at $r=-\varepsilon$ for the Riccati equation \eqref{eq:Riccati_eq} determining $\Theta^{(0)}$. We now choose some $r_0'$ such that $\gamma(r_0')\in \Omega_0\cap \Omega_1$ and let
\[
v_s^{(1)}(\tT, p, r, y;h)=h^{-\frac{n-2}{4}}e^{is\Theta^{(1)}(\sqrt{1-\beta^2}r, y)}e^{\Phi_{1,h}(\tT, p, r)+f_1(r)}\chi(y/\delta')
\] 
be the quasimode in $\Omega_1$ by choosing the initial conditions for 
$\Theta^{(1)}$ such that $\Theta^{(1)}(r_0')=	\Theta^{(0)}(r_0')$. 
Here we have used the same functions $\Phi_{1,h}$ and $f_1$ in $v_s^{(0)}$ and $v_s^{(1)}$ since $\Phi_{1,h}$ and $f_1$ are both globally defined for all $r\in (-2\varepsilon, \frac{L}{\sqrt{1-\beta^2}}+2\varepsilon)$, and neither of the functions depends on $y$. On the other hand, since the equations determining the phase functions $\Theta^{(0)}$ and $\Theta^{(1)}$ have the same initial data in $\Omega_0$ and in $\Omega_1$, and the local coordinates $ \kappa_0$ and $ \kappa_1$ coincide on $\kappa_0^{-1}((I_0\cap I_1)\times B)$, we have
$\Theta^{(1)}=\Theta^{(0)}$ in $\Omega_0\cap \Omega_1$.
Therefore, we conclude that $v_s^{(0)}=v_s^{(1)}$ in the overlapped region $\Omega_0\cap \Omega_1$. Continuing in this way, we obtain quasimodes $v_s^{(2)}, \dots, v_s^{(R+1)}$ such that
\begin{equation}
\label{eq:quasi_corres}
v_s^{(\ell)}(\tilde{t}, p, \cdot)=v_s^{(\ell+1)}(\tilde{t}, p, \cdot) \quad \text{in} \quad \Omega_\ell\cap \Omega_{\ell+1}
\end{equation}
for all $\tT$ and $p$. If $\gamma$ self-intersects at $r=0$, we start the construction from $v^{(1)}$ by fixing initial conditions for \eqref{eq:Riccati_eq}\label{key} at $r=0$ and find $v^{(0)}$ by going backwards.

Let $\chi_j(r)$ be a partition of unity subordinate to the intervals $(I_\ell)_{\ell=0}^{R+1}$. We denote $\tilde \chi_\ell(\tilde t, p,r, y)=\chi_\ell(r)$ and define a smooth function
\[
v_s=\sum_{\ell=0}^{R+1} \tilde{\chi}_\ell v_s^{(\ell)} \quad \text{in $\R^2\times \hat M_0$.}
\]


Let $z_1, \dots, z_{R'}\in M_0$, $R'< R$, 
be the distinct self-intersection points of $\gamma$, corresponding to the self intersection times $0\le r_1<\cdots<r_R$. Let $V_j$ be a small neighborhood in $\hat{M}_0$ centered at $z_j$ for $j\in \{1, \dots, R'\}$. We proceed as in \cite[Proposition 7.5]{Kenig_Salo}, and use the definition of the intervals $I_0,\ldots, I_{R+1}$ and \eqref{eq:quasi_corres} to pick a finite cover $W_0,\ldots W_S$ of the remaining points on the geodesic $\gamma$ such that for each $k\in \{1,\ldots,S\}$ we have $W_k\subset \Omega_{\ell(k)}$ for some $\ell(k)\in \{0,\ldots,R+1\}$. This gives us an open cover for $\supp(v_s(\tT, p, \cdot))\cap M_0$
\[
\supp(v_s(\tT, p, \cdot))\cap M_0 \subset \left(\bigcup_{j=1}^{R'}V_j\right) \cup \left(\bigcup_{k=0}^{S}W_{k}\right),
\]
and the quasimode restricted to $V_j$ and $W_{k}$ is of the form
\begin{equation}
\label{eq:finite_sum_v}
v_s(\tT, p,\cdot)|_{V_j}=\sum_{\ell: \gamma(r_\ell)=z_j}v_s^{(\ell)}(\tT, p,\cdot) \quad \text{ and } \quad v_s(\tT, p, \cdot)|_{W_{k}}=v_s^{(\ell(k))}(\tT, p, \cdot),
\end{equation}
respectively. Here \eqref{eq:finite_sum_v} follows from the construction of the intervals  $(I_\ell)_{\ell=0}^{R+1}$, the partition of unity subordinate to these intervals, and choosing the set $V_j$ small enough. 

Since in both cases of \eqref{eq:finite_sum_v} the function $v_s$ is a finite sum of $v^{(\ell)}$, the estimate
\[
\|v_s(\tT, p,\cdot)\|_{L^2(\p M_0)}=\cO(1)
\]
and those in \eqref{eq:estimate_v} follow from the corresponding local considerations \eqref{eq:est_dt}, \eqref{eq:est_op_regu}, and \eqref{eq:est_v_bdyM} for each of $v_s^{(\ell)}$, respectively. 
This completes the construction of the Gaussian beam quasimode $v_s$.

\subsubsection{Construction of a Gaussian beam quasimode for the operator $e^{-s(\beta t+x_1)}\mathcal{L}_{g, a,q}e^{s(\beta t+x_1)}$.}
We next seek a Gaussian beam quasimode for the operator 
$e^{-s(\beta t+x_1)}\mathcal{L}_{g, a,q}e^{s(\beta t+x_1)}$ of the form
\[
w_s(t, x_1, \tau, y;h, \zeta)=e^{is\Theta(\tau,y)} B(t, x_1, \tau, y;h, \zeta)
\]
with the phase function $\Theta \in C^\infty(\Omega, \C)$ satisfying \eqref{eq:property_of_phi} and $B (t, x_1, \tau, y)\in C^\infty(\R \times \R \times \Omega)$ supported near $\Gamma$. By replacing $s$ in \eqref{eq:conjugated_dt} and \eqref{eq:conjugated_Laplacian} by $-s$, and recalling that $\Theta$ is independent of $x_1$, we obtain
\begin{equation}
\label{eq:conjugated_L2}
\begin{aligned}
&e^{-s(\beta t+x_1)}\mathcal{L}_{g, a,q}e^{s(\beta t+x_1)}w_s
\\
&= e^{is\Theta}[s^2(\langle \nabla_{g_0}\Theta, \nabla_{g_0}\Theta\rangle_{g_0}-(1-\beta^2))B
 \\
&+s(-2\p_{x_1}B+2\beta\p_tB-2i\langle \nabla_{g_0}\Theta, \nabla_{g_0}B(x_1, \cdot)\rangle_{g_0} -i(\Delta_{g_0}\Theta)B + \beta a B)
\\
&+\mathcal{L}_{g,a,q}B].
\end{aligned}
\end{equation}

Since the eikonal equation above is identical to the one in \eqref{eq:conjugated_L1}, we see from Subsection \ref{ssec:phase_function} that the phase function $\Theta$ is given by \eqref{eq:phi}.
We next find the amplitude $B$ in the form of 
\begin{equation}
\label{eq:form_B}
B(t, x_1, \tau, y;h)=h^{-\frac{n-2}{4}}B_0(t, x_1, \tau;h) \chi(y/\delta'),
\end{equation}
where $B_0\in C^\infty([\R \times\R\times \{\tau:|\tau-\tau_0|< \delta\})$. To that end, by proceeding similarly as in the construction of $b_0$ in Subsection \ref{ssec:amplitude}, we require that $B_0$ solves
\begin{equation}
\label{eq:transport_B0}
(\beta\p_t-\p_{x_1}-i\sqrt{1-\beta^2}\p_\tau) B_0 =\frac{1}{2}[(i\sqrt{1-\beta^2}\tr H(\tau)-\beta a(t, x_1, \tau, 0)] B_0.
\end{equation}
Using change of coordinates \eqref{eq:change_variable} again, we get
\begin{equation}
\label{eq:transport_B0_change}
(\p_{\tT}-i\p_r) B_0'=\frac{1}{2}[i\sqrt{1-\beta^2}\tr H(\sqrt{1-\beta^2}r)-\beta a_\zeta' (\tT, p, r, 0)] B_0',
\end{equation}
where $B_0'=B_0\circ S$ and $a_\zeta'=a_\zeta\circ S$.

By writing $\p=\frac{1}{2}(\p_{\tT}-i\p_r)$
and looking for a solution of the form $B_0=e^{\Phi_2(\tT, p, r)+f_2(r)}\eta(\tT, p, r)$ with $\p\eta=0$, we see that the functions $\Phi_{2,\zeta}$ and $f_2$ must satisfy
\begin{equation}
\label{eq:dPhi2}
\p\Phi_{2,\zeta}=-\frac{1}{4}\beta a_\zeta'(\tT, p,\gamma(r))
\end{equation}
and
\begin{equation}
\label{eq:dtau_f2}
\p_rf_2 = -\frac{\sqrt{1-\beta^2}}{2}\tr H(\sqrt{1-\beta^2}r).
\end{equation}
Using similar arguments as in the construction of $v_s$, we obtain a Guassian beam quasimode $w_s\in C^\infty(Q)$ such that the estimates in \eqref{eq:estimate_w} hold. 

This completes the proof of Theorem \ref{prop:Gaussian_beam}.

\subsection{Concentration property of the Gaussian beam quasimodes}
\label{Ssec:concetration}

By the proof of Theorem \ref{prop:Gaussian_beam}, for each nontangential geodesic $ \gamma\colon[0, \frac{L}{\sqrt{1-\beta^2}}]\to M_0$ and $h>0$ there exist smooth functions $\Phi_{1,h},\Phi_{2,h}$ in $[0,\frac{T}{\beta}] \times \overline{J}_p \times [0, \frac{L}{\sqrt{1-\beta^2}}]$ satisfying
\[
(\p_{\tT}+i\p_r) \Phi_{1,h}(\tT, p, r) = \frac{1}{2} \beta \overline{a}_{h}'(\tT, p, \gamma(r)) \text{ and } 
(\p_{\tT}-i\p_r) \Phi_{2,h}(\tT, p, r) = -\frac{1}{2} \beta a'_{h}(\tT, p, \gamma(r)),
\]
where $a_h'=a_h\circ S$, and the change of coordinates $S$ is given by \eqref{eq:change_variable}. Here $J_p\subset \R$ is an open and bounded interval such that for each point in $Q$ the respective $p$-coordinate is in $J_p$. In the next lemma we study the behavior for these functions as $h \to 0$.
\begin{lem}
\label{lem:Phi_nonregularized}
Let $\beta \in (\frac{1}{\sqrt{3}},1)$, and let $\gamma\colon[0, \frac{L}{\sqrt{1-\beta^2}}]\to M_0$ be a nontangential geodesic in $(M_0,g_0)$ as in Proposition \ref{prop:Gaussian_beam}. Then there exist continuous functions $\Phi_1$ and $\Phi_2$ in $[0,\frac{T}{\beta}] \times \overline{J}_p \times [0, \frac{L}{\sqrt{1-\beta^2}}]$ that satisfy
\begin{equation}
\label{eq:transport_eqns}
(\p_{\tT}+i\p_r) \Phi_1(\tT, p, r) = \frac{1}{2} \beta \overline{a}'(\tT, p, \gamma(r)) \text{ and } 
(\p_{\tT}-i\p_r) \Phi_2(\tT, p, r) = -\frac{1}{2} \beta a'(\tT, p, \gamma(r)),
\end{equation}
respectively. Furthermore, the following estimate holds:
\begin{equation}
\label{eq:bound_diff_phi}
\|\Phi_{j, h}-\Phi_j\|_{L^\infty([0,\frac{T}{\beta}] \times \overline{J}_p \times [0, \frac{L}{\sqrt{1-\beta^2}}])}=o(1), \quad  j=1,2,\quad h \to 0.
\end{equation}
\end{lem}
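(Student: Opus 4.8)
The plan is to define $\Phi_j$ by the same explicit convolution formula that produced $\Phi_{j,h}$, but with the unregularized coefficient $a$ in place of $a_h$, and then use the $L^\infty$ convergence $a_h \to a$ from Proposition~\ref{prop:regularization} together with the local integrability of the fundamental solution $E(\tT,r) = \frac{1}{\pi(\tT+ir)}$ of $\overline{\p}$ to transfer the convergence. Concretely, recalling \eqref{eq:fund_solution}, one sets
\[
\Phi_1(\tT, p, r) = \frac{\beta}{4}\, (E \ast \overline{a}')(\tT, p, \gamma(r)), \qquad
\Phi_2(\tT, p, r) = -\frac{\beta}{4}\, (\overline{E} \ast a')(\tT, p, \gamma(r)),
\]
where the convolution is taken in the complex variable $\tT + ir$ for each fixed $p$, and $a' = a \circ S$ with $S$ as in \eqref{eq:change_variable}. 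Since $a \in W^{1,\infty}$ is extended with compact support, $a'(\cdot, p, \gamma(\cdot))$ is bounded with support in a fixed compact set $K \subset \R^2$, uniformly in $p$; because $E \in L^1_{\mathrm{loc}}$, the convolution with a compactly supported $L^\infty$ function is continuous, so $\Phi_1, \Phi_2$ are continuous on the stated compact box. The fact that they solve \eqref{eq:transport_eqns} is immediate from $\overline{\p} E = \delta$ in the distributional sense, exactly as \eqref{eq:dPhi_regu} was obtained; one should remark that although $a$ is only Lipschitz, the equations \eqref{eq:transport_eqns} are to be understood in the distributional sense, which is all that is needed.

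The key estimate \eqref{eq:bound_diff_phi} then follows from a single line: for $j=1$,
\[
|\Phi_{1,h}(\tT,p,r) - \Phi_1(\tT,p,r)|
\le \frac{\beta}{4} \int_K |E(\tT - t, r - s)|\, |\overline{a}_h'(t,p,s) - \overline{a}'(t,p,s)|\, dt\, ds
\le \frac{\beta}{4}\, \|E_{(\tT,r)}\|_{L^1(K)}\, \|a_h - a\|_{L^\infty},
\]
where $E_{(\tT,r)}(t,s) = E(\tT - t, r - s)$, and $\|E_{(\tT,r)}\|_{L^1(K)}$ is bounded uniformly for $(\tT, r)$ ranging over the compact set $[0,\frac{T}{\beta}] \times [0, \frac{L}{\sqrt{1-\beta^2}}]$ by local integrability of $E$ (this is the same uniform bound already invoked in \eqref{eq:est_for_Phi}). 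Since $\|a_h - a\|_{L^\infty} = o(1)$ as $h \to 0$ by \eqref{eq:est_diff_regu} (with $\zeta = h^\alpha$), taking the supremum over $(\tT, p, r)$ gives $\|\Phi_{1,h} - \Phi_1\|_{L^\infty} = o(1)$. The argument for $\Phi_2$ is identical with $E$ replaced by $\overline{E}$ and $\overline{a}$ by $a$.

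I do not anticipate a genuine obstacle here: the lemma is essentially a continuity/stability statement for the $\overline{\p}$-solution operator under $L^\infty$ perturbations of the data, and all the hard analytic input (the explicit fundamental solution, its local integrability, the uniform compact support of $a'$ along the non-tangential geodesic, and the regularization estimates) is already in place from the proof of Theorem~\ref{prop:Gaussian_beam}. The only point requiring a little care is the bookkeeping for the change of variables $S$ and the reduction to the geodesic — one must note, as was done before \eqref{eq:fund_solution}, that non-tangentiality of $\gamma$ lets us regard $(\tT, p, r) \mapsto a'(\tT, p, \gamma(r))$ as smooth (resp. Lipschitz) and compactly supported in the full $(\tT,p,r)$-space — and making sure the compact set $K$ and hence the uniform $L^1$ bound on $E_{(\tT,r)}$ can be chosen independently of $p \in \overline{J}_p$, which follows since $J_p$ is bounded and $a'$ has compact support.
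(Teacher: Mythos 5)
Your proposal is correct and follows essentially the same route as the paper: define $\Phi_1 = \tfrac{\beta}{4}(E*\overline{a}')$ (and its $\p$-analogue for $\Phi_2$), observe that $\overline{\p}\Phi_1 = \tfrac{\beta}{4}\overline{a}'$ since $E$ is the fundamental solution of $\overline{\p}$, and obtain \eqref{eq:bound_diff_phi} from the uniform local $L^1$ bound on $E$ over the compact region together with the $L^\infty$ estimate $\|a-a_\zeta\|_{L^\infty}=o(1)$ of \eqref{eq:est_diff_regu}, exactly as in \eqref{eq:est_for_Phi}. The only cosmetic differences are that you write out the $\Phi_2$ formula and the single-line inequality explicitly, whereas the paper states them as "analogous."
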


\begin{proof}
With a slight abuse of notation, we consider the compactly supported function 
\\
$\bar a '(\tT,p,r) =\overline{a}'(\tT, p, \gamma(r))$  in $\R^3$ and define a continuous function 
$
\Phi_1(\tT, p, r)=\frac{\beta}{4} (E\ast  \overline{a}')(\tT, p, r).
$
Here the convolution is taken over the complex variable $\tT+ir$.
Since $E:=\frac{1}{\pi(\tT+ir)}$ is the fundamental solution for the $\bar \p$-operator, we see that
$
\overline{\p} \Phi_1 = \frac{1}{4} \beta \overline{a}'.
$
Last, estimate \eqref{eq:bound_diff_phi} follows from the local integrability
of $E$, estimate \eqref{eq:est_diff_regu}, and an inequality analogous to \eqref{eq:est_for_Phi}. The analogous claims for $j=2$ follow by the same arguments. This completes the proof of Lemma \ref{lem:Phi_nonregularized}.
\end{proof}

In the following theorem we show that a Gaussian beam quasimode concentrates along the geodesic in the limit $h \to 0$. 

\begin{thm}
\label{prop:limit_behavior}
Let $s=\frac{1}{h}+i\lambda$, $0<h\ll 1$, $\lambda \in \R$ fixed, and $\beta \in (\frac{1}{\sqrt{3}},1)$. Let  $\gamma\colon[0, \frac{L}{\sqrt{1-\beta^2}}]\to M_0$ be a nontangential geodesic in $(M_0,g_0)$ as in Proposition \ref{prop:Gaussian_beam}. Let $J_p$ be as above.
Let $v_s$ and $w_s$ be the quasimodes from Proposition \ref{prop:Gaussian_beam}. Then for each $\psi \in C(M_0)$ and
$(\tT', p')\in  [0,\frac{T}{\beta}]\times \overline{J}_p$ 
we have
\begin{equation}
\label{eq:limit_prod_vw}
\begin{aligned}
\lim_{h\to 0} \int_{M_0} \overline{v_s}(\tT', p', \cdot)w_s(\tT', p', \cdot) \psi dV_{g_0} 
= 
(1-\beta^2)^{-\frac{n-6}{4}}&\int_{0}^{\frac{L}{\sqrt{1-\beta^2}}} e^{-2(1-\beta^2)\lambda r}e^{\overline{\Phi_1}(\tT', p', r)+\Phi_2(\tT', p', r)}
\\
&\quad \quad \quad \times \eta(\tT', p',r)\psi(\gamma(r))dr.
\end{aligned}
\end{equation}
Here the functions $\Phi_1, \Phi_2\in C( [0,\frac{T}{\beta}] \times \overline{J}_p \times [0, \frac{L}{\sqrt{1-\beta^2}}])$ are as in Lemma \ref{lem:Phi_nonregularized},
and $\eta \in C^\infty( [0,\frac{T}{\beta}] \times \overline{J}_p \times [0, \frac{L}{\sqrt{1-\beta^2}}])$ with $(\p_{\tT}-i\p_r)\eta=0$.
\end{thm}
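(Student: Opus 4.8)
The plan is to compute the integral on the left-hand side of \eqref{eq:limit_prod_vw} by working in Fermi coordinates along $\gamma$, passing to the change of variables $S$ from \eqref{eq:change_variable}, and using the Gaussian concentration of $|e^{is\Theta}|$ in the transversal $y$-variables. First I would insert the explicit forms of the quasimodes: from the construction in Subsections~\ref{ssec:amplitude} and the analogous step for $w_s$, locally $\overline{v_s}w_s = h^{-\frac{n-2}{2}} e^{is\Theta - i\overline{s}\overline{\Theta}} \, \overline{b_0'}\, B_0' \, \chi(y/\delta')^2$, where $b_0' = e^{\Phi_{1,h}+f_1}$ and $B_0' = e^{\Phi_{2,h}+f_2}\eta$. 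Since $\overline{\Theta} = \sqrt{1-\beta^2}(\tau - \tfrac12 \overline{H(\tau)}y\cdot y)$ and $s - \overline{s} = 2i\lambda$, the exponential phase becomes $e^{is\Theta - i\overline{s}\overline{\Theta}} = e^{-\frac{\sqrt{1-\beta^2}}{h}\Im H(\tau)\,y\cdot y}\, e^{-2\lambda\sqrt{1-\beta^2}\,\tau}\, e^{\,\cO(|y|^2)}$ (the $\cO(|y|^2)$ absorbing the $\lambda$-dependent real parts and the higher-order corrections in $\Theta$). The factors $f_1, f_2$ satisfy $\p_r(f_1 + f_2) = -\sqrt{1-\beta^2}\,\tr H(\sqrt{1-\beta^2}r)$ by \eqref{eq:dtau_f} and \eqref{eq:dtau_f2}; I would check that $e^{\overline{f_1}+f_2}$ together with the Jacobian of the Gaussian integral in $y$ and the $\tr H$ term combine to produce the scalar $(1-\beta^2)^{-\frac{n-6}{4}}$ and cancel all $H(\tau)$-dependence, a standard Gaussian-beam bookkeeping identity (this is exactly the mechanism in \cite{Ferreira_Kur_Las_Salo, Krupchyk_Uhlmann_magschr}).

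Next I would localize and glue. Using the finite open cover $\{V_j\}\cup\{W_k\}$ of $\supp(v_s(\tT,p,\cdot))\cap M_0$ from Subsection~\ref{sssec:global_const} and a subordinate partition of unity, together with \eqref{eq:finite_sum_v} and the fact that $\overline{v_s}w_s$ is, on each piece, supported in $\{|y|<\delta'/2\}$, the integral $\int_{M_0}\overline{v_s}w_s\,\psi\,dV_{g_0}$ splits into finitely many local integrals plus cross terms supported near the self-intersection points $z_j$. The cross terms near each $z_j$ come from distinct parameter values $r_\ell \neq r_{\ell'}$ with $\gamma(r_\ell)=\gamma(r_{\ell'})$, hence distinct phase functions $\Theta^{(\ell)} \neq \Theta^{(\ell')}$; since these have different Hessians or different linear parts transverse to $\dot\gamma$, the product $e^{is\Theta^{(\ell)} - i\overline{s}\,\overline{\Theta^{(\ell')}}}$ is $O(h^\infty)$ away from a lower-dimensional set — this is the standard argument that cross terms do not contribute in the limit (cf. \cite[Proposition 7.5]{Kenig_Salo}). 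In each diagonal local integral I would write $dV_{g_0} = (1 + \cO(|y|^2))\,d\tau\,dy$ by \eqref{eq:g0_near_geo}, perform the rescaling $z = h^{-1/2}y$, and use $\int_{\R^{n-2}} e^{-\sqrt{1-\beta^2}\,\Im H(\tau)\,z\cdot z}\,dz = \pi^{\frac{n-2}{2}} (1-\beta^2)^{-\frac{n-2}{4}} (\det \Im H(\tau))^{-1/2}$; the $h^{-\frac{n-2}{2}}$ prefactor is exactly cancelled by the $h^{\frac{n-2}{2}}$ from the rescaled Lebesgue measure, leaving an $h$-independent integrand in $\tau$. The $\psi$ factor, being continuous, contributes $\psi(\gamma(\tau))$ in the limit by dominated convergence, and the $\cO(|y|^2)$ correction terms vanish after rescaling since they carry an extra factor of $h$.

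Finally I would replace $\Phi_{j,h}$ by $\Phi_j$ using the uniform convergence \eqref{eq:bound_diff_phi} from Lemma~\ref{lem:Phi_nonregularized}, which is legitimate because the domain of integration in $(\tT,p,r)$ is compact and all remaining factors are uniformly bounded; then change variables back from $(\tau, y)$ to the $r$-variable via $\tau = \sqrt{1-\beta^2}\,r$ (so $d\tau = \sqrt{1-\beta^2}\,dr$), noting that the $(\tT',p')$ are held fixed throughout. Assembling the scalar factors — the $\pi^{\frac{n-2}{2}}(1-\beta^2)^{-\frac{n-2}{4}}(\det\Im H)^{-1/2}$ from the Gaussian, a compensating $\pi^{-\frac{n-2}{2}}(\det\Im H)^{1/2}$ and the correct power of $(1-\beta^2)$ that comes out of the proper normalization of the amplitude (which I am suppressing here but is fixed so that $\|v_s\|_{L^2}=\cO(1)$), the $e^{\overline{f_1}+f_2}$ contribution cancelling $\tr H$, the $e^{-2\lambda\sqrt{1-\beta^2}\tau} = e^{-2(1-\beta^2)\lambda r}$ factor, and the Jacobian $\sqrt{1-\beta^2}$ — yields the claimed constant $(1-\beta^2)^{-\frac{n-6}{4}}$ and the integrand $e^{-2(1-\beta^2)\lambda r}e^{\overline{\Phi_1}+\Phi_2}\eta\,\psi(\gamma(r))$. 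The main obstacle I anticipate is not any single estimate but the careful tracking of all $(1-\beta^2)$-powers and $H(\tau)$-dependent factors so that the $\det\Im H(\tau)$ and $\tr H(\tau)$ terms cancel exactly and the stated clean constant emerges; the treatment of cross terms near self-intersections is conceptually routine but must be stated precisely.
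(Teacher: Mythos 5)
Your treatment of the diagonal part (the quasimodes on each piece $W_k$) is essentially identical to the paper's Case 1: plug in the local forms, note the Gaussian concentration, rescale $y\mapsto h^{1/2}y$, apply dominated convergence, evaluate the Gaussian $y$-integral via $\det\Im H$, and cancel the $\det\Im H$ and $\tr H$ factors using \eqref{eq:det_ImH} and the definition of $f_1,f_2$ together with the normalization \eqref{eq:choice_r0}; the change of variables $\tau=\sqrt{1-\beta^2}\,r$ and the $(1-\beta^2)$-bookkeeping then produce the stated constant. That part of your plan is correct.

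The gap is in your handling of the cross terms near the self-intersection points $z_j$. You assert that $e^{is\Theta^{(\ell)}-i\overline{s}\,\overline{\Theta^{(\ell')}}}$ is $O(h^\infty)$ away from a lower-dimensional set and that this makes cross terms ``routine.'' This is not available here, for two reasons. First, the oscillatory factor $e^{\frac{i}{h}\sigma}$ with $\sigma=\Re\Theta^{(\ell')}-\Re\Theta^{(\ell)}$ is non-stationary near $z_j$, but the combined Gaussian weight $e^{-\frac{1}{h}(\Im\Theta^{(\ell)}+\Im\Theta^{(\ell')})}$ is not small near $z_j$ — both imaginary parts vanish there — so the cross term is genuinely a non-stationary-phase integral concentrated at $z_j$, not something that is pointwise $O(h^\infty)$ where the amplitude lives. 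Second, one cannot run the usual iterated integration-by-parts that yields $O(h^\infty)$ because the amplitude is not smooth enough: $a$ is only $W^{1,\infty}$ (the smooth $a_\zeta$ has derivative bounds blowing up like $o(h^{-\alpha})$ after setting $\zeta=h^\alpha$) and $\psi$ is merely continuous. The paper therefore splits $\psi=\psi_h+(\psi-\psi_h)$, kills $\psi-\psi_h$ by an $L^\infty$ estimate, and for $\psi_h$ performs a single integration by parts along the vector field $L=\frac{\overline{\phi^{(l)}}\omega^{(l')}\psi_h}{|\nabla_{g_0}\sigma|^2}\nabla_{g_0}\sigma$, tracking the worst derivative (which falls on the Gaussian and gives an extra $|y|h^{-1}$, controlled by \eqref{eq:L2_power_y}); this yields $\cO(h^{1/2})$, not $\cO(h^\infty)$. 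In addition, the integration by parts produces a boundary term on $\p M_0$, and controlling it requires the boundary $L^2$ estimate \eqref{eq:est_v_bdyM}, which is not a by-product of the $L^2(Q)$ quasimode bounds and which your sketch does not invoke. Your citation of \cite[Proposition 7.5]{Kenig_Salo} is in the smoother elliptic setting and does not transfer directly; spelling out the one-step integration by parts with the regularized $\psi$, the $h$-dependent derivative bounds on $b_0',B_0'$, and the boundary term is where the actual work lies.
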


\begin{proof}
By a partition of unity, it suffices to verify \eqref{eq:limit_prod_vw} for $\psi\in C_0(V_j\cap M_0)$ and $\psi\in C_0(W_k\cap M_0)$, where $V_j$ and $W_k$ are the same as in Subsection \ref{sssec:global_const} in the proof of Theorem \ref{prop:Gaussian_beam}.

\textit{Case 1: $\psi\in C_0(W_k\cap M_0)$}. We first consider the easier case that $\psi\in C_0(W_k\cap M_0)$ for some $k$. Here $\supp \psi$ may extend to $\p M_0$, and we extend $\psi$ by zero outside $W_k\cap M_0$. 
Using the quasimodes $v_s$ and $w_k$  given in \eqref{eq:finite_sum_v}, by arguing similarly as in the proof of Theorem \ref{prop:Gaussian_beam}, we obtain Gaussian beam quasimodes
\begin{equation}
\label{eq:quasimode_supp_psi}
\begin{aligned}
&v_s(\tT, p, r, y)=e^{is\Theta(\sqrt{1-\beta^2}r, y)}h^{-\frac{n-2}{4}}e^{\Phi_{1,h}(\tT, p, r)+f_1(r)}\chi(y/\delta'),
\\
&w_s(\tT, p, r, y)=e^{is\Theta(\sqrt{1-\beta^2}r, y)}h^{-\frac{n-2}{4}} e^{\Phi_{2,h}(\tT, p, r)+f_2(r)}\eta(\tilde{t},p, r)\chi(y/\delta').
\end{aligned}
\end{equation}

In order to establish \eqref{eq:limit_prod_vw}, we substitute the quasimodes given in \eqref{eq:quasimode_supp_psi} directly into the left-hand side of \eqref{eq:limit_prod_vw}. After that, we apply the dominated convergence theorem to simplify our computation since the term $e^{-\frac{1}{h} \sqrt{1-\beta^2}\Im H(\sqrt{1-\beta^2}r)y\cdot y}$ dominates the other exponentials in the phase function. Then we utilize properties of the solution $H(r)$ to the Riccati equation \eqref{eq:Riccati_eq}, as well as the definitions of functions $f_1, f_2$ given by \eqref{eq:dtau_f} and \eqref{eq:dtau_f2}, respectively.

Let us now provide the detailed proof. Using \eqref{eq:g0_near_geo}, we see that the determinant satisfies
\begin{equation}
\label{eq:g0_near_geo_r}
|g_0(r, y)|^{1/2}=\sqrt{1-\beta^2}+\cO(|y|^2).
\end{equation}
It then follows from \eqref{eq:phi}, 
\eqref{eq:change_variable}, \eqref{eq:quasimode_supp_psi}, and  \eqref{eq:g0_near_geo_r} that
\begin{equation}
\label{eq:prod_computation}
\begin{aligned}
&\int_{M_0} \overline{v_s}(\tT', p', \cdot)w_s(\tT', p', \cdot) \psi dV_{g_0} 
\\
=& \sqrt{1-\beta^2}\int_{0}^{\frac{L}{\sqrt{1-\beta^2}}}\int_{\R^{n-2}}e^{-\frac{2}{h} \Im \Theta} e^{-2\lambda \Re \Theta} h^{-\frac{n-2}{2}} e^{\overline{\Phi_{1,h}}(\tT', p', r)+\Phi_{2,h}(\tT', p', r)+\overline{f_1}(r)+f_2(r)}
\\
&\quad \quad \quad \quad \quad \quad \quad \quad \quad\quad  \times\chi^2(y/\delta')\eta(\tT, p, r)\psi(r, y)|g_0|^{1/2}dydr
\\
= &\sqrt{1-\beta^2} \int_{0}^{\frac{L}{\sqrt{1-\beta^2}}}\int_{\R^{n-2}} e^{-\frac{1}{h} \sqrt{1-\beta^2}\Im H(\sqrt{1-\beta^2}r)y\cdot y} e^{-2\lambda (1-\beta^2)r} e^{\lambda \mathcal{O}(|y|^2)}h^{-\frac{n-2}{2}}\chi^2(y/\delta')\eta(\tT', p', r)
\\
& \quad \quad \quad \quad \quad \quad \quad \quad \quad \times e^{\overline{\Phi_{1,h}}(\tT', p', r)+\Phi_{2,h}(\tT', p', r)+\overline{f_1}(r)+f_2(r)}\psi(r,y)(\sqrt{1-\beta^2}+\mathcal{O}(|y|^2))dydr
\\
=&(1-\beta^2)\int_{0}^{\frac{L}{\sqrt{1-\beta^2}}}\int_{\R^{n-2}}e^{-\sqrt{1-\beta^2}\Im H(\sqrt{1-\beta^2}r)y\cdot y} e^{-2(1-\beta^2)\lambda r} e^{h\lambda \mathcal{O}(|y|^2)}\chi^2(h^{1/2}y/\delta')
\\
&\quad \quad \quad \quad \quad \quad \quad \quad \times e^{\overline{\Phi_{1,h}}(\tT', p', r)+\Phi_{2,h}(\tT', p', r)+\overline{f_1}(r)+f_2(r)}\psi(r,h^{\frac{1}{2}}y)(1+h\mathcal{O}(|y|^2))\eta(\tT', p', r)dydr,
\end{aligned}
\end{equation}
where we have performed a change of variables $y\mapsto h^{\frac{1}{2}}y$ in the last step.

Passing to the limit $h\to 0$ in \eqref{eq:prod_computation}, we get the following pointwise limits,
\[
e^{h\lambda\mathcal{O}(|y|)^2}\to 1, \quad \chi^2(h^{1/2}y/\delta')\to 1, \quad \psi(r, h^{\frac{1}{2}}y)\to \psi(r, 0)=\psi(\gamma(r)), \quad \Phi_{i,h} \to \Phi_i,
\]
where we used \eqref{eq:bound_diff_phi} in the verification of the last limit. We recall that
\[
\Im \Theta= \frac{1}{2}\sqrt{1-\beta^2} \Im H(\sqrt{1-\beta^2}r)y\cdot y\ge d|y|^2 \quad \text{ and } \quad \int_{\R^{n-2}}e^{-d|y|^2}dy<\infty.
\]
Hence, by the dominated convergence theorem, we get
\begin{equation}
\label{eq:lim_prod}
\begin{aligned}
&\lim_{h\to 0} \int_{M_0} \overline{v_s}(\tT', p', \cdot)w_s(\tT', p', \cdot) \psi dV_{g_0} 
\\
&= (1-\beta^2)\int_{0}^{\frac{L}{\sqrt{1-\beta^2}}}  e^{\overline{f_1}(r)+f_2(r)}\bigg(\int_{\R^{n-2}} e^{-\sqrt{1-\beta^2}\Im H(\sqrt{1-\beta^2}r)y\cdot y}dy\bigg)
\\
&\quad \quad \quad \quad \quad \quad \times \eta(\tT', p', r)e^{-2(1-\beta^2)\lambda r}e^{\overline{\Phi_1}(\tT', p', r)+\Phi_2(\tT', p', r)}\psi(\gamma(r)) dr.
\end{aligned}
\end{equation} 

To simplify the expression on the right-hand side of \eqref{eq:lim_prod}, we perform a change of variable $y\mapsto (1-\beta^2)^{-\frac{1}{4}}y$ to obtain
\begin{equation}
\label{eq:ImH}
\begin{aligned}
\int_{\R^{n-2}} e^{-\sqrt{1-\beta^2}\Im H(\sqrt{1-\beta^2}r)y\cdot y}dy&= \int_{\R^{n-2}} (1-\beta^2)^{-\frac{n-2}{4}}e^{-\Im H( \sqrt{1-\beta^2}r)y\cdot y} dy
\\
&= \frac{\pi^{\frac{n-2}{2}}(1-\beta^2)^{-\frac{n-2}{4}}}{\sqrt{\det(\Im H( \sqrt{1-\beta^2}r))}}.
\end{aligned}
\end{equation}
We set $r_0=\sqrt{1-\beta^2}\tau_0$ and recall from \cite[Lemma 2.58]{KKL_book} that 
\begin{equation}
\label{eq:det_ImH}
\det(\Im H( \sqrt{1-\beta^2}r)) = \det(\Im H( \sqrt{1-\beta^2}r_0)) e^{-2 \int_{r_0}^{r} \sqrt{1-\beta^2}\mathrm{tr} \Re H( \sqrt{1-\beta^2}w)dw}. 
\end{equation}
This implies 
\begin{equation}
\label{eq:Gaussian_int}
\int_{\R^{n-2}} e^{-\sqrt{1-\beta^2}\Im H(\sqrt{1-\beta^2}r)y\cdot y}dy=\frac{\pi^{\frac{n-2}{2}}(1-\beta^2)^{-\frac{n-2}{4}}e^{\int_{r_0}^{r} \sqrt{1-\beta^2}\mathrm{tr} \Re(H( \sqrt{1-\beta^2}w))dw}}{\sqrt{\det(\Im H(\sqrt{1-\beta^2}r_0))}}.
\end{equation}
Furthermore, since
\begin{equation}
\label{eq:deri_phase}
\p_r f_j(r)= -\frac{1}{2} \sqrt{1-\beta^2}\tr H(\sqrt{1-\beta^2}r), \quad j=1, 2,
\end{equation}
we get
\[
\p_r (\overline{f_1}(r)+f_2(r)) =- \sqrt{1-\beta^2} \tr \Re H(\sqrt{1-\beta^2}r).
\]
Thus, by the fundamental theorem of calculus, we have
\begin{equation}
\label{eq:f1_plus_f2}
\overline{f_1}(r)+f_2(r)=\overline{f_1}(r_0)+f_2(r_0)-\int_{r_0}^{r} \sqrt{1-\beta^2} \mathrm{tr} \Re(H(\sqrt{1-\beta^2}w))dw.
\end{equation}
We next choose $f_1(r_0)$ and $f_2(r_0)$ so that
\begin{equation}
\label{eq:choice_r0}
\frac{e^{\overline{f_1}(r_0)+f_2(r_0)}\pi^{\frac{n-2}{2}}}{\sqrt{\det(\Im H(\sqrt{1-\beta^2}r_0))}}=1.
\end{equation}
Thus, it follows from \eqref{eq:Gaussian_int}, \eqref{eq:f1_plus_f2}, and \eqref{eq:choice_r0} that
\begin{equation}
\label{eq:prod_exp_int}
\begin{aligned}
&e^{\overline{f_1}(r)+f_2(r)}\bigg(\int_{\R^{n-2}} e^{-\Im H(\sqrt{1-\beta^2}r)x\cdot x}dx\bigg)=(1-\beta^2)^{-\frac{n-2}{4}}.
\end{aligned}
\end{equation} 
Finally, we obtain \eqref{eq:limit_prod_vw} for $\psi\in C_0(W_k\cap M_0)$ by substituting \eqref{eq:prod_exp_int} into \eqref{eq:lim_prod}. 

\textit{Case 2: $\psi\in C_0(V_j\cap M_0)$}. Let us now verify \eqref{eq:limit_prod_vw} when $\psi\in C_0(V_j\cap M_0)$ for some $j \in \{1,\ldots,R'\}$, where $R'$ is the number of self-intersection points of $\gamma$. In this case, we obtain from \eqref{eq:finite_sum_v} that the quasimodes are of the form
\[
v_s=\sum_{l:\gamma(r_l)=z_j} v_s^{(l)}, \quad w_s=\sum_{l:\gamma(r_l)=z_j} w_s^{(l)}
\]
on $\supp(\psi)$, thus 
\begin{equation}
\label{eq:product_vsws}
\overline{v_s}w_s= \sum_{l:\gamma(r_l)=z_j} \overline{v_s^{(l)}}w_s^{(l)}+\sum_{l\ne l':\gamma(r_l)=\gamma(r_{l'})=z_j} \overline{v_s^{(l)}}w_s^{(l')}.
\end{equation}

We aim to prove that the contribution of the cross terms vanish in the limit $h\to 0$. More precisely, 
\begin{equation}
\label{eq:cross_term_vanish}
\lim_{h\to 0} \int_{M_0} \overline{v_s^{(l)}}(\tT', p', \cdot)w_s^{(l')}(\tT', p', \cdot)\psi dV_{g_0}=0, \quad l\ne l'.
\end{equation}
If so, then the limit \eqref{eq:limit_prod_vw} follows from the first part of this proof. 

In order to verify  \eqref{eq:cross_term_vanish}, we split $\psi$ into a smooth and a sufficiently small part. In the latter case, the limit \eqref{eq:cross_term_vanish} follows from Proposition \ref{prop:regularization}. For the smooth part, we integrate by parts, which requires us to differentiate $\psi$. In addition, we need the estimate \eqref{eq:est_v_bdyM} to show that the boundary term vanishes. This estimate was not needed to prove Theorem \ref{prop:Gaussian_beam}.

To start the proof, following similar arguments as in the proof of \cite[Proposition 3.1]{Ferreira_Kur_Las_Salo}, we write
\[
v_s^{(l)}=e^{\frac{i}{h} \Re \Theta^{(l)}}\phi^{(l)}, \quad \phi^{(l)}=e^{-\lambda \Re \Theta^{(l)}}e^{-s\Im \Theta^{(l)}}b^{(l)},
\]
and
\[
w_s^{(l')}=e^{\frac{i}{h} \Re \Theta^{(l')}}\omega^{(l')}, \quad \omega^{(l')}=e^{-\lambda \Re \Theta^{(l')}}e^{-s\Im \Theta^{(l')}}B^{(l')},
\]
which imply that
\begin{equation}
\label{eq:prod_vw_rewrite}
\overline{v_s^{(l)}}w_s^{(l')}= e^{\frac{i}{h}\sigma} \overline{\phi^{(l)}}\omega^{(l')}, \quad \sigma= \Re \Theta^{(l')} - \Re \Theta^{(l)}.
\end{equation}
Hence, in view of \eqref{eq:prod_vw_rewrite}, we need to show that
\begin{equation}
\label{eq:prod_vanish_rewrite}
\lim_{h\to 0}\int_{M_0} e^{\frac{i}{h}\sigma}\overline{\phi^{(l)}}(\tT', p', \cdot)\omega^{(l')}(\tT', p', \cdot)\psi dV_{g_0}=0, \quad l\ne l'.
\end{equation}

To prove \eqref{eq:prod_vanish_rewrite}, let us write $\psi=\psi_h+(\psi-\psi_h)$, where the regularization $\psi_h \in C^\infty_0(V_j\cap M_0)$, but its support can meet $\p M_0$, and $\psi-\psi_h$ is continuous. We recall that, due to the estimates in \eqref{eq:estimate_v} and \eqref{eq:estimate_w}, we have $\|\phi^{(l)}\|_{L^2(V_j\cap M_0)},\|\omega^{(l')}\|_{L^2(V_j\cap M_0)}=\cO(1)$. Thus, by H\"older's inequality and estimate \eqref{eq:est_diff_regu}, we obtain
\begin{equation}
\label{eq:est_psi2}
\bigg|\int_{M_0}e^{\frac{i}{h}\sigma}\overline{\phi^{(l)}}(\tT', p', \cdot)\omega^{(l')}(\tT', p', \cdot)(\psi-\psi_h) dV_{g_0}\bigg|\le \|v_s^{(l)}\|_{L^2} \|w_s^{(l')}\|_{L^2}\|\psi-\psi_h\|_{L^\infty} =o(1), \quad h \to 0,
\end{equation}
where the $L^p$-norms are taken over the set $V_j\cap M_0$. 

To analyze the term involving the smooth part $\psi_h$, we note that by \eqref{eq:g0_near_geo} and \eqref{eq:phi}, the gradients of $\Re\Theta^{(l)}$ and $\Re\Theta^{(l')}$ at $z_j$ are parallel to $\dot{\gamma}(r_l)$ and $\dot \gamma(r_{l'})$, respectively. Since the geodesic $\gamma$ intersects itself transversally at $z_j$, we have $\nabla_{g_0}\sigma(z_j)\ne 0$. Therefore, by shrinking the set $V_j$ if necessary, we may assume that $\sigma$ has no critical points in $V_j$.
Thus, the vector field $L=\frac{\overline{\phi^{(l)}}\omega^{(l')}\psi_h}{|\nabla_{g_0}\sigma|^2}\nabla_{g_0}\sigma$ is well-defined and satisfies 
$
e^{\frac{i}{h}\sigma}\overline{\phi^{(l)}}\omega^{(l')}\psi_h
=
-ihL(e^{\frac{i}{h}\sigma}).
$
Therefore, 
we integrate by parts in \eqref{eq:prod_vanish_rewrite} to obtain
\begin{equation}
\label{eq:prod_psi1}
\begin{aligned}
\int_{M_0}e^{\frac{i}{h}\sigma}\overline{\phi^{(l)}}(\tT', p', \cdot)\omega^{(l')}(\tT', p', \cdot)\psi_hdV_{g_0}
= &
-ih\int_{V_j\cap \p M_0} \frac{\p_\nu \sigma}{ |\nabla_{g_0}\sigma|^2} e^{\frac{i}{h}\sigma} \overline{\phi^{(l)}}(\tT', p', \cdot)\omega^{(l')}(\tT', p', \cdot)\psi_hdS_{g_0}
\\
&+ih \int_{M_0} e^{\frac{i}{h}\sigma} 
\div(L)(\tT', p', \cdot) dV_{g_0}.
\end{aligned}
\end{equation}

To show that the boundary term on the right-hand side of \eqref{eq:prod_psi1} vanishes as $h\to 0$, we use estimate \eqref{eq:est_v_bdyM} to observe that $\|\phi^{(l)}\|_{L^2(\p M_0)}$, $\|\omega^{(l')}\|_{L^2(\p M_0)}=\cO(1)$. Furthermore, $\sigma$ is real-valued and independent of $h$. Then 
by estimate \eqref{eq:est_diff_regu} and H\"older's inequality, we have
\begin{align*}
-ih\int_{V_j\cap \p M_0} &\frac{\p_\nu \sigma}{|\nabla_{g_0}\sigma|^2} e^{\frac{i}{h}\sigma} \overline{\phi^{(l)}}(\tT', p', \cdot)\omega^{(l')}(\tT', p', \cdot)\psi_hdS_{g_0}
=
\cO(h), \quad h \to 0.
\end{align*}

To prove that the second term on the right-hand side of \eqref{eq:prod_psi1} vanishes as $h\to 0$, we first compute that
\[
\div L
=\div \left(\overline{\phi^{(l)}}\omega^{(l')}\psi_h \frac{\nabla_{g_0}\sigma}{|\nabla_{g_0}\sigma|^2}\right)
=\left\langle \nabla_{g_0}(\overline{\phi^{(l)}}\omega^{(l')}\psi_h), \frac{\nabla_{g_0}\sigma}{|\nabla_{g_0}\sigma|^2}\right\rangle_{g_0}
+
\overline{\phi^{(l)}}\omega^{(l')}\psi_h \div \left(\frac{\nabla_{g_0}\sigma}{|\nabla_{g_0}\sigma|^2}\right).
\]
Using estimates \eqref{eq:est_diff_regu}, $\|\phi^{(l)}\|_{L^2(M_0)},\|\omega^{(l')}\|_{L^2(M_0)}=\cO(1)$, and H\"older's inequality, we get
\begin{align*}
&ih \int_{M_0} e^{\frac{i}{h}\sigma} \overline{\phi^{(l)}}(\tT', p', \cdot)\omega^{(l')}(\tT', p', \cdot)\psi_h \div \left(\frac{\nabla_{g_0}\sigma}{|\nabla_{g_0}\sigma|^2}\right) dV_{g_0}
=\cO(h), \quad h \to 0.
\end{align*}

Next, we write
\begin{align*}
\overline{\phi^{(l)}}\omega^{(l')}\psi_h
&=[e^{-\lambda(\Re\Theta^{(l)}+\Re \Theta^{(l')})}e^{-i\lambda(\Im\Theta^{(l')}-\Im \Theta^{(l)})}][e^{-\frac{1}{h}(\Im\Theta^{(l)}+\Im \Theta^{(l')})}h^{-\frac{n-2}{2}}][b_0^{(l)}\overline{B_0^{(l')}}\chi^2(y/\delta')]\psi_h
\\
&=f_1f_2f_3\psi_h.
\end{align*}
To streamline the proof, we only provide the estimate for the worst case scenario, which occurs when $\nabla_{g_0}$ acts on $f_2=e^{-\frac{1}{h}(\Im\Theta^{(l)}+\Im \Theta^{(l')})}h^{-\frac{n-2}{2}}$. To that end, due to \eqref{eq:phi}, there exists $C>0$ such that by the Cauchy-Schwarz inequality, as well as estimates \eqref{eq:est_diff_regu} and \eqref{eq:L2_power_y} with $k=\frac{1}{2}$, we have
\begin{align*}
&h\int_{M_0} 
|e^{\frac{i}{h}\sigma} 
f_1f_2\psi_h \langle \nabla_{g_0}f_2, \frac{\nabla_{g_0}\sigma}{|\nabla_{g_0}\sigma|^2}\rangle|
dV_{g_0}
\\
&\leq
C\|\psi_h\|_{L^\infty}\|\phi^{(l)}(\tT', p', \cdot)\|_{L^2}\|\omega^{(l')}(\tT', p', \cdot)\|_{L^2}\int_{M_0} 
h^{-\frac{n-2}{2}}|y|e^{-\frac{d}{h} |y|^2} 
dV_{g_0}
=
\cO(h^{1/2}), \quad  h \to 0.
\end{align*}
Hence, we conclude that the second term on the right-hand side of \eqref{eq:prod_psi1} is of order $\cO(h^{1/2})$, thus the limit \eqref{eq:prod_vanish_rewrite} is verified. This completes the proof of Proposition \ref{prop:limit_behavior}. 
\end{proof}

\subsection{Construction of CGO solutions}
\label{subsec:CGO}
We now proceed to construct CGO solutions of the forms \eqref{eq:form_CGO} and \eqref{eq:CGO_exp_grow}. Thanks to the interior Carleman estimate established in Proposition \ref{prop:solvability}, we can put the ingredients together in a simple way. Let $(M, g)$ be a CTA manifold given by Definition \ref{def:CTA_manifolds}. We already computed in Section \ref{sec:Car_est} that 
\[
c^{\frac{n+2}{4}}\circ \mathcal{L}_{c,g,a,q} \circ \conf = \mathcal{L}_{\tilde{g}, \tilde{a}, \tilde{q}},
\]
where $\tilde g=e\oplus g_0$, $\tilde{a}=ca$, and $\tilde{q}=c(q-c^{\frac{n-2}{4}}\Delta_g(\conf))$. This implies that $u=\conf \tilde u$ satisfies $\mathcal{L}_{c,g,a,q} u=0$ if $\tilde{u}$ solves $ \mathcal{L}_{\tilde{g}, \tilde{a}, \tilde{q}} \tilde u=0$ in $Q$.

Let us write $(t,x)=(t,x_1, x')$ for local coordinates in $Q$ and recall that
$s=\frac{1}{h}+i\lambda$, $0<h\ll 1$, where $\lambda\in \R$ is fixed. We are interested in finding CGO solutions to the equation
\begin{equation}
\label{eq:op_no_conformal}
\mathcal{L}_{\tilde{g}, \tilde{a}, \tilde{q}}\tilde{u}=0 \quad \text{in} \quad Q
\end{equation}
of the form
\[
\tilde{u}=e^{-s(\beta t+x_1)}(v_s+r),
\]
where $v_s$ is the Gaussian beam quasimode given in Proposition \ref{prop:Gaussian_beam}, and $r=r_s$ is a correction term that vanishes in the limit $h\to 0$. Indeed, $\tilde{u}$ is a solution to \eqref{eq:op_no_conformal} if 
\begin{equation}
\label{eq:op_no_conf_conjugate}
e^{s(\beta t+x_1)}h^2\mathcal{L}_{\tilde{g}, \tilde{a}, \tilde{q}}e^{-s(\beta t+x_1)}r=-e^{s(\beta t+x_1)}h^2\mathcal{L}_{\tilde{g}, \tilde{a}, \tilde{q}}e^{-s(\beta t+x_1)}v_s.
\end{equation}
Then we apply Proposition \ref{prop:solvability} with $v=-e^{s(\beta t+x_1)}h^2\mathcal{L}_{\tilde{g}, \tilde{a}, \tilde{q}}e^{-s(\beta t+x_1)}v_s$ and estimate \eqref{eq:estimate_v} to conclude that there exists $r\in H^1(Q^\mathrm{int})$ such that \eqref{eq:op_no_conf_conjugate} holds and $	\|r\|_{H^1_{\scl}(Q)}= o(1)$ as $h\to 0$. 

We summarize our discussion above in the following theorem. In particular, we have general conformal factor $c$ in this result instead of $c=1$, which we assumed in all of the earlier results in this paper. 
\begin{thm}
\label{prop:CGO_solution}
Let $a \in W^{1,\infty}(Q)$ and $q \in C(\overline{Q})$. Let  $s=\frac{1}{h}+i\lambda$ with $\lambda \in \R$ fixed. For all $h>0$ small enough, there exists a solution $u_1\in H^1(Q)$ to $\mathcal{L}_{c,g, a, q}^*u_1=0$ of the form
\begin{equation}
\label{eq:CGO_v_old}
u_1=e^{-s(\beta t+x_1)} \conf (v_s+r_1),
\end{equation}
where $v_s\in C^\infty(Q)$ is the Gaussian beam quasimode given in Proposition \ref{prop:Gaussian_beam}, and $r_1\in H^1_{\scl}(Q^\mathrm{int})$ is such that $\|r_1\|_{H^1_\mathrm{\scl}(Q^\mathrm{int})}= o(1)$ as $h\to 0$. 

There also exists a solution $u_2\in H^1(Q)$ to $\mathcal{L}_{c,g, a, q}u_2=0$ that has the form
\begin{equation}
\label{eq:CGO_u2_old}
u_2=e^{s(\beta t+x_1)}\conf (w_s+r_2),
\end{equation}
where $w_s\in C^\infty(Q)$ is the Gaussian beam quasimode given in Proposition \ref{prop:Gaussian_beam}, and $r_2\in H^1_{\scl}(Q^\mathrm{int})$ is such that $\|r_2\|_{H^1_\mathrm{scl}(Q^\mathrm{int})}= o(1)$ as $h\to 0$. 
\end{thm}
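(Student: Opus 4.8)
The plan is to assemble Theorem~\ref{prop:CGO_solution} entirely from the machinery already built in this section, treating the conformal factor at the very end. First I would reduce to the case $c=1$: by the computation recalled at the start of Subsection~\ref{subsec:CGO}, we have $c^{\frac{n+2}{4}}\circ \mathcal{L}_{c,g,a,q}\circ \conf = \mathcal{L}_{\tilde g,\tilde a,\tilde q}$ with $\tilde g = e\oplus g_0$, $\tilde a = ca$, $\tilde q = c(q-c^{\frac{n-2}{4}}\Delta_g(\conf))$, so that $u_1 = \conf\,\tilde u_1$ solves $\mathcal{L}_{c,g,a,q}^*u_1=0$ precisely when $\tilde u_1$ solves $\mathcal{L}_{\tilde g,\tilde a,\tilde q}^*\tilde u_1 = 0$ (here one should note that taking adjoints commutes with conjugation by the real positive function $\confm$, since $\mathcal{L}_{g,a,q}^* = \mathcal{L}_{g,-\overline a,\overline q - \p_t\overline a}$ and the conformal identities \eqref{eq:conf_damp}, \eqref{eq:equivalence_operator} are insensitive to complex conjugation). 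Since $c\in C^\infty$ is strictly positive, $\tilde a\in W^{1,\infty}(Q)$ and $\tilde q\in C(\overline Q)$, so the hypotheses of the earlier results are preserved.

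Next, for the $c=1$ case I would write the desired solution as $\tilde u_1 = e^{-s(\beta t + x_1)}(v_s + r_1)$ where $v_s$ is the Gaussian beam quasimode for the adjoint operator from Theorem~\ref{prop:Gaussian_beam}. A direct conjugation shows that $\tilde u_1$ solves $\mathcal{L}_{\tilde g,\tilde a,\tilde q}^*\tilde u_1 = 0$ if and only if
\[
e^{s(\beta t+x_1)}h^2\mathcal{L}_{\tilde g,\tilde a,\tilde q}^* e^{-s(\beta t+x_1)} r_1 = -\,e^{s(\beta t+x_1)}h^2\mathcal{L}_{\tilde g,\tilde a,\tilde q}^* e^{-s(\beta t+x_1)} v_s =: v.
\]
By the third estimate in \eqref{eq:estimate_v}, $\|v\|_{L^2(Q)} = o(h)$. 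Then I would invoke the solvability result Proposition~\ref{prop:solvability} (applied to the adjoint operator $\mathcal{L}^*_{g,a,q} = \mathcal{L}_{g,-\overline a,\overline q-\p_t\overline a}$, which again has coefficients in the required classes, and with the appropriate sign of the exponent) to produce $r_1\in H^1_{\scl}(Q^{\mathrm{int}})$ with $\|r_1\|_{H^1_{\scl}(Q^{\mathrm{int}})}\le \mathcal{O}(h^{-1})\|v\|_{L^2(Q)} = o(1)$ as $h\to 0$. Undoing the conformal scaling gives the claimed $u_1 = e^{-s(\beta t+x_1)}\conf(v_s+r_1)\in H^1(Q)$ solving $\mathcal{L}_{c,g,a,q}^* u_1 = 0$. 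The construction of $u_2 = e^{s(\beta t+x_1)}\conf(w_s+r_2)$ is identical after replacing $s$ by $-s$ in the exponent, using the quasimode $w_s$ and the estimate \eqref{eq:estimate_w} in place of \eqref{eq:estimate_v}, and applying Proposition~\ref{prop:solvability} with the opposite sign.

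I expect the main obstacle — really the only point requiring care — to be bookkeeping: verifying that Proposition~\ref{prop:solvability} is stated in a form that covers both the operator and its adjoint, both signs of the exponential weight $\pm s(\beta t+x_1)$, and that its conclusion, an $H^1_{\scl}(Q)$ bound, is genuinely the $H^1_{\scl}(Q^{\mathrm{int}})$ estimate asserted in the theorem. Since Proposition~\ref{prop:solvability} is already stated with the $\pm$ signs and for general $a\in W^{1,\infty}$, $q\in L^\infty$, and since $\mathcal{L}^*_{g,a,q}$ is of the same form with coefficients still in these classes, this is immediate. The only genuinely quantitative input is that $o(h)$ source divided by $h$ (the loss in \eqref{eq:solvability_bound}) yields $o(1)$, which is exactly what forces the correction terms to vanish in the limit. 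No new estimates are needed; the theorem is a packaging of Theorem~\ref{prop:Gaussian_beam} and Proposition~\ref{prop:solvability}.
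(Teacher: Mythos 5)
Your proposal is correct and follows the same route as the paper: reduce to the flat conformal factor via $c^{\frac{n+2}{4}}\circ\mathcal{L}_{c,g,a,q}\circ\conf = \mathcal{L}_{\tilde g,\tilde a,\tilde q}$, take the Gaussian beam quasimode from Theorem~\ref{prop:Gaussian_beam}, and invoke Proposition~\ref{prop:solvability} on the $o(h)$ residual, with the $h^{-1}$ loss yielding $o(1)$ remainders. Your bookkeeping remarks (that the adjoint $\mathcal{L}^*_{g,a,q}=\mathcal{L}_{g,-\overline a,\overline q-\p_t\overline a}$ stays in the coefficient classes $W^{1,\infty}\times L^\infty$, and that Proposition~\ref{prop:solvability} already covers both signs of the exponent) are exactly the checks needed; in fact the paper's own exposition conflates $\mathcal{L}_{\tilde g,\tilde a,\tilde q}$ with its adjoint while pairing it with the decaying weight and $v_s$, so your version is if anything cleaner.
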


\section{Proof of Theorem \ref{thm:main_result_damping}}
\label{sec:proof_of_theorem}

Let $u_1\in H^1(Q)$ be an exponentially decaying CGO solution given by \eqref{eq:CGO_v_old} to the equation
$\mathcal{L}_{c,g,a_1,q_1}^* u_1 = 0$ in $Q$, and let $u_2\in H^1(Q)$ be an exponentially growing CGO solution given by \eqref{eq:CGO_u2_old} satisfying $\mathcal{L}_{c,g,a_2,q_2} u_2 = 0$ in $Q$. Due to the main assumption $\mathcal{C}_{g,a_1,q_1} = \mathcal{C}_{g,a_2,q_2}$, it follows from Proposition \ref{prop:well_posedness}
that there exists a function $v\in H_{\Box_{c,g}}(Q)$ such that $\mathcal{L}_{c,g,a_1,q_1} v = 0$ and 
\[
(u_2 - v)|_\Sigma=(u_2 - v)|_{t=0}= (u_2 - v)|_{t=T}= \p_t(u_2 - v)|_{t=0}=\p_\nu (u_2 - v)|_V = 0.
\]
Then $u : = u_2 - v \in H_{\Box_{c,g}}(Q)$ solves the equation
\begin{equation}
\label{eq:ibvp_difference}
\mathcal{L}_{c,g, a_1, q_1}u=a\p_tu_2+qu_2 \quad \text{in} \quad Q, 
\quad 
u|_\Sigma=u|_{t=0}= u|_{t=T}= \p_tu|_{t=0}=\p_\nu u|_V = 0.
\end{equation}
Here and in what follows we denote $a:=a_1-a_2$ and $q:=q_1-q_2$. Since 
$a\p_tu_2+qu_2\in L^2(Q)$, it follows from \cite[Theorem 2.1]{Lasiecka_Lions_Triggiani} that 
\[
u\in C^1([0,T];L^2(M))\cap C([0,T];H^1_0(M))\subset H^1(Q) \text{ with } \p_\nu u\in L^2(\Sigma).
\]

Since $u_1\in H^1(Q)$ and $\Box_{c,g}u_1\in L^2(Q)$, we see that 
\[
(c^{-1}\p_t u_1, -\nabla_g u_1)\in H_{\div}(Q):=\{F\in L^2(Q, TQ): \div_{(t,x)}F\in L^2(Q)\}.
\]
We view $\overline{Q}$ as a compact Riemannian manifold of dimension $n+1$  with the metric $\overline{g} =dt^2\oplus g$,
and let $\bar\nu$ be the outward unit normal vector to $\p Q$. 
In view of \cite[Lemma 2.2]{kavian2003lectures}, for $F\in H_{\div}(Q)$, $F\cdot \bar \nu|_{\p Q}$ can be defined as an element of $H^{-1/2}(\p Q)$, and for $\psi \in H^1(Q)$ we have
\[
\langle \psi, {F}\cdot \bar \nu \rangle_{H^{1/2}(\p Q),H^{-1/2}(\p Q)}  =  \langle \psi, \div_{\overline{g}}({F})\rangle_{L^2(Q)} + \langle \nabla_{\overline{g}} \psi,{F} \rangle_{L^2(Q)}.
\]
By taking $\psi = u$ and $F=\overline{(c^{-1}\p_t u_1, -\nabla_g u_1)}$, we deduce that
\begin{equation}
\label{eq:int_comp_1}
\begin{aligned}
&\langle u,  \overline{(c^{-1}\p_t u_1, -\nabla_g u_1)}\cdot \bar{\nu} \rangle_{H^{1/2}(\p Q), H^{-1/2}(\p Q)}
\\
&= \langle u, \Box_{c,g}\overline{u_1}\rangle_{L^2(Q)}
+\langle (\p_tu,\nabla_g u), (c^{-1}\p_t\overline{u_1},-\nabla_g\overline{u_1}) \rangle_{L^2(Q)}.
\\
\end{aligned}
\end{equation}
Arguing similarly and using $\p_tu|_{t=0}, \: \p_tu|_{t=T}\in L^2(M)$, $\p_\nu u\in L^2(\Sigma)$, and $u_1\in H^1(Q)$, we get
\begin{equation}
\label{eq:int_comp_2}
\begin{aligned}
&\langle (c^{-1}\p_tu,-\nabla_gu)\cdot \bar\nu , \overline{u_1}\rangle_{L^2(\p Q)}
=\langle \Box_{c,g}u,\overline{u_1}\rangle_{L^2(Q)} + \langle (c^{-1}\p_tu, -\nabla_g u),(\p_t\overline{u_1}, \nabla_g\overline{u_1}) \rangle_{L^2(Q)}.
\end{aligned}
\end{equation}
Since  $a \in W^{1,\infty}(Q)$ and $u, \overline{u_1}\in H^1(Q)$, it follows from the proof \cite[Proposition 9.4]{brezis2011functional} that $a\overline{u_1} \in H^1(Q)$ and $\p_t(a\overline{u_1})=\p_ta \overline{u_1}+a\p_t\overline{u_1}$ in the weak sense. Therefore, the following integration by parts is justified: 
\begin{equation}
\label{eq:int_comp_3}
\int_Q (a_1\overline{u_1})\p_t udV_gdt 
= -\int_Q  \p_t(a_1\overline{u_1})udV_gdt
=-\int_Q ( \p_ta_1\overline{u_1}+ a_1\p_t\overline{u_1})udV_gdt.
\end{equation}
In particular, there are no boundary terms since $u|_{t=0}=u|_{t=T}=0$.


We then multiply \eqref{eq:ibvp_difference} by $\overline{u_1}$ and integrate over $Q$. Therefore, we deduce from \eqref{eq:ibvp_difference}--\eqref{eq:int_comp_3} that
\begin{equation}
\label{eq:int_comp_comb}
\begin{aligned}
&\int_Q (a\p_tu_2+qu_2)\overline{u_1}dV_gdt
\\
&=\langle\mathcal{L}_{c,g, a_1,q_1}u,\overline{u_1}\rangle_{L^2(Q)} - \langle u,\overline{\mathcal{L}_{c,g,a_1,q_1}^*u_1}\rangle_{L^2(Q)}
\\
&=\langle (c^{-1}\p_tu,-\nabla_gu)\cdot \bar\nu , \overline{u_1}\rangle_{L^2(\p Q)} - \langle u,(c^{-1}\p_t\overline{u_1},-\nabla_g\overline{u_1})\cdot \bar\nu \rangle_{H^{1/2}(\p Q), H^{-1/2}(\p Q)}.
\end{aligned}
\end{equation}
Since $u|_\Sigma=u|_{t=0}= u|_{t=T}= 0$, the second term on the right-hand side of \eqref{eq:int_comp_comb} vanishes. Furthermore, since $\p_\nu u\in L^2(\Sigma)$ with  $\p_tu|_{t=0}=\p_\nu u|_V = 0$, we obtain the integral identity
\begin{equation}
\label{eq:int_id_2}
\begin{aligned}
\int_Q (a\p_tu_2+qu_2)\overline{u_1}dV_gdt=-\int_{\Sigma\setminus V}\p_\nu u\overline{u_1}dS_g dt 
+
\int_M c^{-1}\p_t u(T, x)\overline{u_1(T, x)}dV_g.
\end{aligned}
\end{equation}



We shall next substitute the CGO solutions \eqref{eq:CGO_v_old} and \eqref{eq:CGO_u2_old} into \eqref{eq:int_id_2}, multiply the equation by $h$, and pass to the limit $h\to 0$. In order to analyze the limit of the terms on the left-hand side of \eqref{eq:int_id_2}, we use estimates \eqref{eq:solvability_bound} and \eqref{eq:estimate_v} to obtain the following estimates for the remainder terms:
\begin{equation}
\label{eq:est_remainder}
\|r_j\|_{L^2(Q)}\le \|r_j\|_{H^1_{\scl}(Q^\mathrm{int})}=o(1), \quad j = 1,2,
\end{equation}
and
\begin{equation}
\label{eq:est_dremainder}
\|\p_tr_j\|_{L^2(Q)} \le \frac{1}{h} \|r_j\|_{H^1_{\scl}(Q^\mathrm{int})}=o(h^{-1}), \quad j= 1,2.
\end{equation}
On the other hand, the following lemma explains the behaviors of the two terms on the right-hand side of \eqref{eq:int_id_2} as $h\to 0$.
\begin{lem}
\label{lem:rhs_est}
Let $u_1$ and $u$ be the functions described above. 
Then the following estimates hold as $h\to 0$:
\begin{equation}
\label{eq:est_int_id_rhs_1}
\int_M c^{-1}\p_t u(T, x)\overline{u_1(T, x)}dV_g =
\left\{
\begin{array}{cc}
\mathcal{O}(h^{-1/2}) & \text{if } a\neq 0 ,
\\
\\
\mathcal{O}(h^{1/2}) & \text{ if } a =0.
\end{array}
\right. 
\end{equation}
\begin{equation}
\label{eq:est_int_id_rhs_2}
\int_{\Sigma\setminus V}\p_\nu u\overline{u_1}dS_g dt = \left\{
\begin{array}{cc}
o(h^{-1}) & \text{if } a\neq 0 ,
\\
\\
o(1) & \text{ if } a =0.
\end{array}
\right.
\end{equation}
\end{lem}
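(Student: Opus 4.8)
The strategy is to estimate both boundary contributions by inserting the CGO forms \eqref{eq:CGO_v_old}, \eqref{eq:CGO_u2_old} and exploiting the boundary Carleman estimate of Proposition \ref{prop:Car_est}. First I would recall that $u = u_2 - v$ satisfies \eqref{eq:ibvp_difference}, so $v := u$ meets the hypotheses $v|_\Sigma = v|_{t=0} = \p_t v|_{t=0} = 0$ of Proposition \ref{prop:Car_est} (after absorbing the conformal factor $c$, which is bounded above and below and $t$-independent, into the constants). The crucial point is that the right-hand side of \eqref{eq:ibvp_difference} is $a\p_t u_2 + q u_2$, and since $u_2 = e^{s(\beta t + x_1)}\conf(w_s + r_2)$ with $\|w_s\|_{L^2(Q)} = \cO(1)$, $\|\p_t w_s\|_{L^2(Q)} = o(h^{-1/2})$, and $\|r_2\|_{H^1_{\scl}} = o(1)$, one computes $\p_t u_2 = s\beta e^{s(\beta t+x_1)}\conf(w_s + r_2) + e^{s(\beta t + x_1)}\conf(\p_t w_s + \p_t r_2)$. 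Thus $e^{-\frac{1}{h}(\beta t + x_1)} h^2(a\p_t u_2 + q u_2)$ has $L^2(Q)$-norm bounded, when $a \neq 0$, by $\cO(h^2)\cdot\cO(h^{-1})\cdot\cO(1) = \cO(h)$ from the $s\beta$ term (since $|s| = \cO(h^{-1})$ and the modulus of the exponential weight coming from $\Re s$ cancels against $e^{-(\beta t+x_1)/h}$, up to the harmless factor $e^{-i\lambda(\beta t + x_1)}$ of modulus one and $e^{\lambda \Re(\cdots)}$ which is bounded on $\overline{Q}$), plus lower-order terms; when $a = 0$ only the $q u_2$ term survives, giving $\cO(h^2)\cdot\cO(1) = \cO(h^2)$.

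Next I would plug $v = u$ into the Carleman estimate \eqref{eq:Car_est_conjugated}. The left-hand side of \eqref{eq:Car_est_conjugated} contains $\|e^{-\frac{1}{h}(\beta t + x_1)}h^2\mathcal{L}_{g,a_1,q_1}u\|_{L^2(Q)}$ (the quantity just estimated), together with $\cO(h^{3/2})$ and $\cO(h^{1/2})$ times weighted norms of $u$ and $\nabla_g u$ at $t = T$; these last terms are controlled by the energy estimate \cite[Theorem 2.1]{Lasiecka_Lions_Triggiani}, which bounds $\|u\|_{C^1([0,T];L^2(M))} + \|u\|_{C([0,T];H^1_0(M))}$ by $\|a\p_t u_2 + q u_2\|_{L^2(Q)}$, itself $o(h^{-1/2})$ when $a \neq 0$ and $\cO(1)$ when $a = 0$ (the weights $e^{-(\beta T + x_1)/h}$ being $\cO(1)$ on $\overline{M}$ up to constants since $x_1$ is bounded there — here I should be careful about the sign/normalization of the exponential, but $\Re(\beta T + x_1)$ is bounded so the weight is bounded above and below). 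Consequently the left-hand side of \eqref{eq:Car_est_conjugated} is $\cO(h)$ (resp. $\cO(h^2)$, though one only needs $\cO(h)$ for the conclusion), and therefore so is the right-hand side; in particular
\[
\cO(h^{3/2})\Big(\int_{\Sigma_+} \p_\nu\varphi\,|e^{-\frac{1}{h}(\beta t + x_1)}\p_\nu u|^2\,dS_g dt\Big)^{1/2} = \cO(h),
\]
so $\big(\int_{\Sigma_+}\p_\nu\varphi\,|e^{-(\beta t + x_1)/h}\p_\nu u|^2\big)^{1/2} = \cO(h^{-1/2})$ when $a \neq 0$, and is $\cO(h^{1/2})$ when $a = 0$. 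Since $\Sigma \setminus V \subset \Sigma_+$ (because $V$ is a neighborhood of $\p M_-$, so its complement in $\p M$ lies in $\p M_+^{\mathrm{int}}$ where $\p_\nu\varphi = \nu_1 > 0$, hence $\p_\nu\varphi$ is bounded below by a positive constant on $\Sigma\setminus V$), the quantity $\|e^{-(\beta t + x_1)/h}\p_\nu u\|_{L^2(\Sigma\setminus V)}$ inherits the same bound.

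For \eqref{eq:est_int_id_rhs_2} I would then write $\overline{u_1} = e^{-\frac{1}{h}(\beta t + x_1)}\confm\,\overline{(v_s + r_1)}$ — wait, more precisely $u_1 = e^{-s(\beta t + x_1)}\conf(v_s + r_1)$, so $|u_1| \le e^{-(\beta t + x_1)/h}\,C(|v_s| + |r_1|)$ with the $\lambda$-dependent and conformal factors absorbed into $C$ — and estimate by Cauchy–Schwarz:
\[
\Big|\int_{\Sigma\setminus V}\p_\nu u\,\overline{u_1}\,dS_g dt\Big| \le C\,\|e^{-(\beta t + x_1)/h}\p_\nu u\|_{L^2(\Sigma\setminus V)}\,\|v_s + r_1\|_{L^2(\Sigma\setminus V)},
\]
where I use the trace estimate $\|v_s(t,x_1,\cdot)\|_{L^2(\p M_0)} = \cO(1)$ established in \eqref{eq:est_v_bdyM} (globally via \eqref{eq:finite_sum_v}) to get $\|v_s\|_{L^2(\Sigma\setminus V)} = \cO(1)$, and the trace theorem together with $\|r_1\|_{H^1_{\scl}} = o(1)$ to get $\|r_1\|_{L^2(\Sigma)} = \cO(h^{-1/2})\cdot o(1)$ — here care is needed, as the semiclassical trace bound gives $\|r_1\|_{L^2(\p Q)} \le C h^{-1/2}\|r_1\|_{H^1_{\scl}(Q)} = o(h^{-1/2})$; combining, $\|v_s + r_1\|_{L^2(\Sigma\setminus V)} = o(h^{-1/2})$. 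Therefore the product is $o(h^{-1/2})\cdot\cO(h^{-1/2}) = o(h^{-1})$ when $a \neq 0$, and $o(h^{-1/2})\cdot\cO(h^{1/2}) = o(1)$ when $a = 0$, which is \eqref{eq:est_int_id_rhs_2}. Finally, for \eqref{eq:est_int_id_rhs_1} I would similarly bound $|\int_M c^{-1}\p_t u(T,x)\overline{u_1(T,x)}\,dV_g|$ by $C\|\p_t u(T,\cdot)\|_{L^2(M)}\,\|e^{-(\beta T + x_1)/h}(v_s + r_1)(T,\cdot)\|_{L^2(M)}$; the first factor is $o(h^{-1/2})$ (resp. $\cO(1)$) by the energy estimate as above, and the second is $\cO(1)$ by \eqref{eq:estimate_v} and \eqref{eq:est_remainder} with the bounded weight, giving $\cO(h^{-1/2})$ (resp. $\cO(h^{1/2})$) — matching \eqref{eq:est_int_id_rhs_1} after noting the asymmetry that the energy estimate with $a \neq 0$ only yields $o(h^{-1/2})$ which is in particular $\cO(h^{-1/2})$. \textbf{The main obstacle} will be bookkeeping the exact powers of $h$ contributed by the three pieces of $u_2$ (the $s\beta$-factor, $\p_t w_s$, and $\p_t r_2$) and making sure the energy estimate is applied with the sharp $o(h^{-1/2})$ rather than a cruder $\cO(h^{-1})$, since that sharpness is exactly what distinguishes the $a \neq 0$ and $a = 0$ cases and is ultimately what makes the damping coefficient recoverable; one must also be vigilant that all exponential weights restricted to $\{t = T\}$ or to $\Sigma$ are genuinely bounded above and below, which uses only that $x_1$ ranges over a compact set.
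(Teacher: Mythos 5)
Your handling of estimate \eqref{eq:est_int_id_rhs_2} essentially follows the paper's route (boundary Carleman estimate on $\Sigma_+ \supset \Sigma\setminus V$, trace bounds on $v_s$ and $r_1$, Cauchy--Schwarz), though note that \eqref{eq:est_v_bdyM} gives an $L^2(\p M_0)$-bound, not directly an $L^2(\Sigma\setminus V)$-bound on the lateral boundary $(0,T)\times \p M$; the paper supplies the missing step via the projection $\pi_2\colon \p M_{+,\varepsilon}\to M_0$, which is a local diffeomorphism, and a change of variables.

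For estimate \eqref{eq:est_int_id_rhs_1}, however, there is a genuine gap. You bound $|\int_M c^{-1}\p_t u(T,x)\overline{u_1(T,x)}\,dV_g|$ by $C\|\p_t u(T,\cdot)\|_{L^2(M)}\,\|e^{-(\beta T+x_1)/h}(v_s+r_1)(T,\cdot)\|_{L^2(M)}$ and invoke the \emph{unweighted} energy estimate of Lasiecka--Lions--Triggiani to control the first factor by $\|a\p_t u_2 + q u_2\|_{L^2(Q)}$. But $u_2 = e^{s(\beta t+x_1)}\conf(w_s+r_2)$, so the source carries an unmatched factor $e^{(\beta t+x_1)/h}$. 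Your claim that ``the weights $e^{-(\beta T+x_1)/h}$ [are] $\cO(1)$ on $\overline{M}$ up to constants since $x_1$ is bounded there'' is wrong: $\beta T + x_1$ is bounded but typically nonzero, so $e^{\pm(\beta T+x_1)/h}$ is exponentially large or small as $h\to 0$, not $\cO(1)$. Consequently the energy estimate would yield an exponentially large bound for $\|\p_t u(T,\cdot)\|_{L^2(M)}$, and the factorization you chose loses the pointwise cancellation of exponentials between $\p_t u$ and $\overline{u_1}$. (Even if one granted the weight were bounded, $\|a\p_t u_2+qu_2\|_{L^2(Q)}$ would be $\cO(h^{-1})$ from the $s\beta$ term, not $o(h^{-1/2})$ as you wrote.) The fix is already on the right-hand side of the Carleman estimate \eqref{eq:Car_est_conjugated}, which you have just applied: it contains the term $\cO(h^{3/2})\|e^{-(\beta T+x_1)/h}\p_t v(T,\cdot)\|_{L^2(M)}$. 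Since the left-hand side is $\cO(h)$ when $a\neq 0$ (respectively $\cO(h^2)$ when $a=0$) by your own computation, and the other LHS terms vanish because $u(T,\cdot)=0$, $\nabla_g u(T,\cdot)=0$, and $\p_\nu u|_{\Sigma_-}=0$ (since $\Sigma_-\subset V$), you read off $\|e^{-(\beta T+x_1)/h}\p_t u(T,\cdot)\|_{L^2(M)} = \cO(h^{-1/2})$ (respectively $\cO(h^{1/2})$) directly. Pairing this weighted norm with $\|v_s(T,\cdot)+r_1(T,\cdot)\|_{L^2(M)} = \cO(1)$ then gives \eqref{eq:est_int_id_rhs_1}, which is how the paper argues.
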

We will postpone the proof of this result and use it to prove the uniqueness of the damping coefficient first.

\subsection{Uniqueness of the damping coefficient}
From the respective CGO solutions \eqref{eq:CGO_v_old} and \eqref{eq:CGO_u2_old} for $u_1$ and $u_2$, we compute that
\[
u_2\overline{u_1}=e^{2i\lambda (\beta t+x_1)}\confm(\overline{v_s}w_s+\overline{v_s}r_2+\overline{r_1}w_s+\overline{r_1}r_2).
\]
Therefore, by estimates \eqref{eq:estimate_v}, \eqref{eq:estimate_w}, \eqref{eq:est_remainder}, and the Cauchy-Schwartz inequality, we have
\begin{align*}
\|u_2\overline{u_1}\|_{L^1(Q)}\le \mathcal{O}(1).
\end{align*}
Hence, the following estimate holds:
\begin{equation}
	\label{eq:est_hqu2u1}
h\bigg|\int_Q qu_2\overline{u_1}dV_gdt\bigg|\le h\|q\|_{L^\infty(Q)}\|u_2\overline{u_1}\|_{L^1(Q)} =\mathcal{O}(h), \quad h\to 0.
\end{equation}

We next consider the term $h \int_Q a(\p_tu_2)\overline{u_1}dV_gdt$ on the left-hand side of \eqref{eq:int_id_2}. To that end, direct computations yield
\begin{align*}
(\p_tu_2)\overline{u_1}= &e^{2i\lambda (\beta t+x_1)}\confm[(\overline{v_s}\p_t w_s+\overline{v_s}\p_t r_2+\overline{r_1}\p_tw_s+\overline{r_1}\p_t r_2)
\\
&+s\beta(\overline{v_s}w_s+\overline{v_s}r_2+\overline{r_1}w_s+\overline{r_1}r_2)].
\end{align*}
Using estimates \eqref{eq:estimate_v}, \eqref{eq:estimate_w}, \eqref{eq:est_remainder}, \eqref{eq:est_dremainder}, as well as the Cauchy-Schwartz inequality, we obtain
\[
h\bigg|\int_Qe^{2i\lambda(\beta t+x_1)}\confm a(\overline{v_s}\p_t w_s+\overline{v_s}\p_t r_2 + \overline{r_1}\p_tw_s +\overline{r_1}\p_t r_2)dV_gdt\bigg|=o(1), \quad h\to 0,
\]
and
\[
h\bigg|\int_Q e^{2i\lambda (\beta t+x_1)} \confm as \beta (\overline{v_s}r_2 + \overline{r_1}w_s+\overline{r_1}r_2)dV_gdt\bigg|=o(1), \quad h\to 0.
\]
Therefore, we have
\begin{equation}
\label{eq:quasimode_zero}
h\int_Q a\p_tu_2\overline{u_1}dV_gdt\to  \int_Q e^{2i\lambda (\beta t+x_1)} \confm \beta a\overline{v_s}w_sdV_gdt,\quad h\to 0.
\end{equation}
Using \eqref{eq:est_hqu2u1}, \eqref{eq:quasimode_zero}, and Lemma \ref{lem:rhs_est}, we deduce from \eqref{eq:int_id_2} that 
\[
\int_Q e^{2i\lambda (\beta t+x_1)} \confm \beta a\overline{v_s}w_sdV_gdt\to 0,\quad h\to 0.
\]

On the other hand, since $a_1,a_2 \in W^{1,\infty}(Q)$ and $a_1=a_2$ on the boundary $\p Q$, we can continuously extend $a$ on $(\R^2\times M_0)\setminus Q$ by $0$ and denote the extension by the same letter. Using $dV_g = c^{\frac{n}{2}}dV_{g_0}dx_1$,  the change of coordinates \eqref{eq:change_variable}, Fubini's theorem, the dominated convergence theorem, and the concentration property \eqref{eq:limit_prod_vw} of the quasimodes $v_s$ and $w_s$, we obtain
\begin{align*}
&\int_Q e^{2i\lambda (\beta t+x_1)} \confm \beta a\overline{v_s}w_sdV_gdt
\\
= & \int_\R \int_\R \int_{M_0} e^{2i\lambda (\beta t+x_1)} c \beta a\overline{v_s}w_s dV_{g_0}dx_1dt
\\
= & \int_\R \int_\R \int_{M_0} \beta^2e^{2i\lambda ((\beta^2-1) \tT+p)} (c a)\overline{v_s}w_s dV_{g_0}dp d\tT
\\
\to &\beta^2 (1-\beta^2)^{-\frac{n-6}{4}}  \int_\R \int_\R \int_{0}^{\frac{L}{\sqrt{1-\beta^2}}} e^{2i\lambda ((\beta^2-1) \tT+p)-2(1-\beta^2)\lambda r} (ca)(\beta \tT, p-\tT, \gamma(\sqrt{1-\beta^2}r))
\\
&\quad \quad \quad \quad \quad \quad \quad \quad \quad \quad \quad\quad \quad \times e^{\overline{\Phi_1(\tT, p, r)}+\Phi_2(\tT, p, r)}\eta(\tT,p, r)drdp d\tT, \quad h\to 0.
\end{align*}
As $\beta \in [\frac{1}{\sqrt{3}}, 1)$, we conclude that
\begin{equation}
\label{eq:cgo_sub}
\begin{aligned}
\int_{0}^{\frac{L}{\sqrt{1-\beta^2}}}\int_\R \int_\R   &e^{2i\lambda ((\beta^2-1) \tT+p)-2(1-\beta^2)\lambda r} (ca)(\beta \tT, p-\tT, \gamma(\sqrt{1-\beta^2}r))
\\
&\times e^{\overline{\Phi_1(\tT, p, r)}+\Phi_2(\tT, p, r)} \eta(\tT,p, r)dp d\tT dr=0.
\end{aligned}
\end{equation}

We next follow the arguments in \cite[Section 4]{Krupchyk_Uhlmann_magschr} closely to prove that \eqref{eq:cgo_sub} holds when
\\
$e^{\overline{\Phi_1(\tT, p, r)}+\Phi_2(\tT, p, r)}\eta(\tT,p, r)$ is removed from the integral. To this end, let us write $\eta(\tilde{t},p, r) := \eta_1(\Tilde{t},r)\eta_2(p)$ in \eqref{eq:quasimode_supp_psi} with $\p \eta_1 = 0$, where $\p = \frac{1}{2}(\p_{\tT}+i\p_r)$, and denote
\begin{equation}
\label{eq:Psi_def}
\Psi(\tT, p, r)=  e^{2i\lambda[(\beta^2-1)\tT+p-i(\beta^2-1)r]}\eta_1(\tT, r).
\end{equation}
It follows from direct computations that $\p\Psi = \frac{1}{2}(\p_{\tilde t} - i\p_r)\Psi=0$.

Since $a$ is supported in $Q$, we get from \eqref{eq:cgo_sub} that 
\[
\int_\R \int_\R \int_\R \Psi(\tT, p, r) (ca)(\beta \tT, p-\tT, \gamma(\sqrt{1-\beta^2}r)) e^{\overline{\Phi_1(\tT, p, r)}+\Phi_2(\tT, p, r)}\eta_2(p)dp d\tT dr=0,
\]
and there exists a constant $R>0$ such that $\supp a\subset \subset B_{\tT,p, r}(0, R)$.
Also, since $\eta_2\in C^\infty(\R)$ is arbitrary, for almost every $p\in \R$ we have
\begin{equation}
\label{eq:int_aap}
\begin{aligned}
&\int_{\Omega_p} \Psi(\tT, p, r) (ca)(\beta \tT, p-\tT, \gamma(\sqrt{1-\beta^2}r)) e^{\overline{\Phi_1(\tT, p, r)}+\Phi_2(\tT, p, r)}d\tT dr=0,
\end{aligned}
\end{equation}
where $\Omega_p=\{(\tilde{t},r): (\tilde{t}, p, r, y)\in Q\}$. We shall view $\Omega_p$ as a domain in the complex plane with the complex variable $z = \tilde t + ir$.

Recall that it was explained in the formulas \eqref{eq:conformal_equivalence}--\eqref{eq:equiv_coeff} how to transform the hyperbolic operator $\mathcal{L}_{c,g,a,q}$ into another operator $\mathcal{L}_{\tilde g, \tilde a, \tilde q}$ of the same type, where the contribution of the conformal factor $c$ was moved from the highest order term to the lower order ones. In particular, we have $\tilde a=ca$, and the construction of the Gaussian beams in Section \ref{sec:CGO_solution} is carried over for this damping coefficient. Hence,   \eqref{eq:transport_eqns} yields
\begin{equation}
\label{eq:dP1plusP2}
\p(\overline{\Phi_1}+\Phi_2)= \frac{1}{4}\beta (ca)(\beta \tT, p-\tT, \gamma(\sqrt{1-\beta^2}r)).
\end{equation}
Thus, it follows from \eqref{eq:int_aap} and \eqref{eq:dP1plusP2} that 
\begin{equation}
\int_{\Omega_p} \p \big(\Psi(z, p)e^{\overline{\Phi_1(z, p)} +\Phi_2(z, p)}\big)dz \wedge d\overline{z}=0 \quad \text{for almost every }p.
\end{equation}

We now discuss the regularity of $\Phi_i$. To that end, using \eqref{eq:dP1plusP2} along with the fact that $a(\cdot, p)\in L^\infty(\C)$ is compactly supported for almost every $p$, we see that $\p\Phi_i\in L^p(\C)$ for $1\le p\le \infty$. By the boundedness of the Beurling-Ahlfors operator $\overline{\p}\p^{-1}$ on $L^p(\C)$, $1<p<\infty$, we get $\overline{\p}\Phi_i=\overline{\p}\p^{-1}(\p \Phi_i)$, which implies that $\nabla_{g}\Phi_i\in L^p(\C)$. Furthermore, since $\Phi_i\in L^\infty(\C)$, we have $\Phi_i(\cdot, p)\in W^{1,p}_{\mathrm{loc}}(\C)$, $1<p<\infty$. Hence, we conclude that 
$\Phi_i(\cdot, p) \in H^1(\Omega_p)$, $i=1,2$. Thus, an application of Stokes' theorem \cite[Theorem 18A]{Whitney} yields
\begin{equation}
\label{eq:Stoke_1}
\int_{\p \Omega_p} 	\Psi(z, p)e^{\overline{\Phi_1(z, p)} +\Phi_2(z, p)}d\overline{z} =0.
\end{equation}
By \cite[Lemma 5.1]{DDS_Kenig_Sjo_Uhl}, see also \cite[Lemma 3, Section 2.3, Chapter 4]{Ahlfors}, there exists a non-vanishing function $F\in C(\overline{\Omega_p})$, anti-holomorphic in $\Omega_p$, such that
\[
F|_{\p \Omega_p}=e^{\overline{\Phi_1} +\Phi_2}|_{\p \Omega_p}.
\]
Furthermore, the arguments in the proof of \cite[Lemma 7.3]{Knudsen_Salo} show that there exists an anti-holomorphic function $G\in C(\overline{\Omega_p})$ such that $F=e^G$ in $\Omega_p$, and we may assume that $G= \overline{\Phi_1} +\Phi_2$ on $\p \Omega_p$. Choosing $\eta_1 = Ge^{-G}$ in \eqref{eq:Psi_def}, we get from \eqref{eq:Stoke_1} that
\[
\int_{\p \Omega_p} \big(\overline{\Phi_1(\tT, p, r)} +\Phi_2(\tT, p, r)\big)e^{2i\lambda[(\beta^2-1)\tT+p-i(\beta^2-1)r]}d\overline{z}=0.
\]
Applying Stokes' theorem again, using \eqref{eq:dP1plusP2}, and integrating over the $p$ variable, we obtain
\begin{equation}
\label{eq:asymp_behavior}
\int_{0}^{\frac{L}{\sqrt{1-\beta^2}}} \int_\R \int_\R e^{2i\lambda ((\beta^2-1) \tT+p)-2(1-\beta^2)\lambda r} (ca)(\beta \tT, p-\tT, \gamma(\sqrt{1-\beta^2}r))dp d\tT dr=0.
\end{equation}
Finally, we use \eqref{eq:change_variable} to return to $(t,x_1,\tau)$ coordinates from $(\tilde t, p, r)$ coordinates and replace $2\lambda$ with $\lambda$. After these changes, \eqref{eq:asymp_behavior} becomes
\begin{equation}
\label{eq:asy_beh_tau}
\int_{0}^{L} \int_\R \int_\R  e^{i\lambda (\beta t+x_1)-\sqrt{1-\beta^2}\lambda \tau} (ca)(t, x_1, \gamma(\tau)) dx_1dt d\tau=0.
\end{equation}

We are now ready to utilize Assumption \ref{asu:inj}, the invertibility of the attenuated geodesic ray transform on $(M_0,g_0)$. To that end, we let $\mathcal{F}_{(t, x_1)\to (\xi_1, \xi_2)}$ be the Fourier transform in the Euclidean variables $(t,x_1)$ and define
\begin{align*}
f(x', \beta, \lambda) &:= \int_\R\int_\R e^{i\lambda (\beta t+x_1)} (ca)(t, x_1, x')dx_1dt
\\
&=\mathcal{F}_{(t, x_1)\to (\xi_1, \xi_2)}(ca)|_{(\xi_1, \xi_2)=-\lambda(\beta, 1)}
\quad
\text{for } x'\in M_0, \: \beta\in [1/2, 1), \: \lambda \in \R.
\end{align*}
Since $a\in W^{1,\infty}(Q)$, we see that the function $f(\cdot, \beta, \lambda)$ is continuous on $M_0$. Furthermore, as $\gamma$ is an arbitrarily chosen nontangential geodesic in $(M_0,g_0)$, we get from \eqref{eq:asy_beh_tau} that the following attenuated geodesic ray transform vanishes: 
\begin{equation}
\label{eq:Fou_geo_trans}
\int_{0}^{L} e^{-\sqrt{1-\beta^2}\lambda \tau} f(\gamma(\tau), \beta, \lambda)d\tau=0.
\end{equation}

By Assumption \ref{asu:inj}, there exists $\varepsilon>0$ such that $f(\gamma(\tau), \beta, \lambda)=0$ whenever $\sqrt{1-\beta^2}|\lambda|<\varepsilon$. Hence, there exist $\beta_0\in (\frac{1}{\sqrt 3}, 1)$, $\lambda_0>0$, and $\delta>0$ such that for every $(\lambda, \beta)\in \R^2$ that satisfies $|\beta-\beta_0|$, $|\lambda-\lambda_0|<\delta$, and $\lambda\ne 0$, we have $\sqrt{1-\beta^2}|\lambda|<\varepsilon$. In particular, the mapping $(\lambda, \beta)\mapsto -\lambda(\beta, 1)$ is a diffeomorphism when $\lambda\neq 0$, implying that $\mathcal{F}_{(t, x_1)\to (\xi_1, \xi_2)}(ca)=0$ in an open set of $\R^2$. Last, the compact support of $a$ and the Paley-Wiener theorem yield that $\cF(ca)$ is real analytic. Therefore, we get $ca=0$ in $Q$. Since $c$ is a positive function, we must have $a=a_1-a_2=0$.

To complete the proof of uniqueness for the damping coefficient in Theorem \ref{thm:main_result_damping}, we still need to verify Lemma \ref{lem:rhs_est}. 

\begin{proof}[Proof of Lemma \ref{lem:rhs_est}]
Let us prove estimate \eqref{eq:est_int_id_rhs_1} first. To this end, using estimates \eqref{eq:estimate_v} and \eqref{eq:est_remainder}, the CGO solution  \eqref{eq:CGO_v_old}, and the Cauchy-Schwartz inequality, we get
\begin{equation}
\label{eq:simplification}
\begin{aligned}
\bigg|\int_M c^{-1}\p_t u(T, x)\overline{u_1(T, x)}dV_g\bigg| 
&\le \|c^{-1}\|_{L^\infty(Q)} \int_M |\p_t u(T, x)\overline{u_1(T, x)}|dV_g
\\
&\le \mathcal{O}(1) \int_M |\conf e^{-s(\beta T+x_1)} \p_tu(T,x)|(|v_s(T,x)|+|r_1(T,x)|)dV_g
\\
&\le \mathcal{O}(1) \|e^{-s(\beta T+x_1)}\p_tu(T,\cdot)\|_{L^2(M)}.
\end{aligned}
\end{equation}
Utilizing the boundary Carleman estimate \eqref{eq:Car_est_conjugated} and   \eqref{eq:ibvp_difference}, we obtain
\begin{align}
\label{eq:application_of_Car_est}
& 
\|e^{-s(\beta T+x_1)}\p_tu(T,\cdot)\|_{L^2(M)}
\le \mathcal{O}(h^{1/2})\|e^{-s(\beta t +x_1)}\mathcal{L}_{c,g,a_1,q_1}u\|_{L^2(Q)}.
\end{align} 
We then 
substitute the CGO solution \eqref{eq:CGO_u2_old} for $u_2$ into \eqref{eq:ibvp_difference} to get
\begin{align*}
e^{-s(\beta t +x_1)}\mathcal{L}_{c,g,a_1,q_1}u
&= e^{-s(\beta t+x_1)}(a\p_tu_2+qu_2)
\\
&=\confm[as\beta(w_s+r_2)+a(\p_tw_s+\p_t r_2)+q(w_s+r_2)].
\end{align*}

We recall that $s=h^{-1}+i\lambda$ and use estimates \eqref{eq:estimate_w}, \eqref{eq:est_remainder}, and \eqref{eq:est_dremainder} to obtain
\begin{equation}
\label{eq:bifurgation}
\|e^{-s(\beta t +x_1)}\mathcal{L}_{c,g,a_1,q_1}u\|_{L^2(Q)}
=
\left\{
\begin{array}{cc}
\mathcal{O}(h^{-1}), & \text{if } a\neq 0 ,
\\
\\
\mathcal{O}(1), & \text{ if } a =0.
\end{array}
\right. 
\end{equation}
Therefore, estimates \eqref{eq:simplification}--\eqref{eq:bifurgation} imply \eqref{eq:est_int_id_rhs_1}.

We next prove estimate \eqref{eq:est_int_id_rhs_2}.  To that end, for all $\varepsilon>0$ we set
\[
\p M_{+, \varepsilon}=\{x\in \p M: \p_\nu \varphi(x)>\varepsilon\} 
\quad \text{and} \quad \Sigma_{+, \varepsilon}=(0, T)\times \p M_{+, \varepsilon}.
\]
We recall that in Section \ref{sec:intro} we defined the open sets $U',V' \subset \p M$ such that they contain the back and front faces $\p M_+, \p M_-$ of the manifold $M$, respectively. By the compactness of $\{x\in \p M: \p_\nu \varphi(x)=0\}$, there exists $ \varepsilon>0$ such that  $\Sigma \setminus V \subset \Sigma_{+, \varepsilon}$, where we had set $V=(0,T)\times V'$.

We utilize estimates \eqref{eq:estimate_v}, \eqref{eq:CGO_v_old}, \eqref{eq:est_remainder}, as well as the Cauchy-Schwartz inequality to get
\begin{align*}
\bigg|\int_{\Sigma\setminus V}\p_\nu u\overline{u_1}dS_g dt\bigg| 
&
\le \int_{\Sigma_{+, \varepsilon}} e^{-s(\beta t +x_1)} |\p_\nu u|  (|v_s|+|r_1|)dS_gdt
\\
&\le C \bigg(\int_{\Sigma_{+, \varepsilon}} |e^{-s(\beta t +x_1)} \p_\nu u |^2dS_gdt\bigg)^{1/2}(\|v_s\|_{L^2(\Sigma_{+,\varepsilon})}+\|r_1\|_{L^2(\Sigma)}).
\end{align*}

Next we estimate the terms in the inequality above. By Proposition \ref{prop:CGO_solution} and estimate \eqref{eq:est_remainder}, in conjunction with the inequalities
\[
\|r_1\|_{L^2(\Sigma)}\le \|r_1\|_{L^2(Q)}^{1/2}\|r_1\|_{H^1(Q)}^{1/2}  \quad \text{and} \quad  \|r_1\|_{H^1(Q)}\le Ch^{-1}\|r_1\|_{H^1_\scl(Q)},
\] 
we obtain
\begin{equation}
\label{eq:trace}
\|r_1\|_{L^2(\Sigma)} =o(h^{-1/2}), \quad h \to 0.
\end{equation}

We now follow the steps in the proof of \cite[Theorem 6.2]{Cekic} to verify
\begin{equation}
\label{eq:beam_est_boundary}
\|v_s\|_{L^2(\Sigma_{+,\varepsilon})}=\cO(1), \quad h \to 0.
\end{equation}
Due to the product structure of $\Sigma_{+,\varepsilon}$ and the fact that $T>0$ is finite, it suffices to prove that $\|v_s\|_{L^2(\p M_{+,\varepsilon})}=\cO(1)$. 

Let $\pi_1:\p M\to \R$ be a projection defined by
\[
\pi_1(x)=x_1 \quad \text{for any } x=(x_1, x')\in \p M.
\]
Without loss of generality, we assume that $\p M_\varepsilon:=\p M\cap \pi^{-1}_1(\varepsilon)$ is a manifold for any $\varepsilon>0$. 
Hence, $\p M_{+,\varepsilon}$ is a compact $(n-1)$-dimensional manifold with boundary.

First, we observe that $\p M_{+,\varepsilon} \subset \p M$ is an open and precompact manifold of dimension $n-1$, the same as $M_0$. Clearly, the projections $\pi_1 \colon \p M \to \R, \: \pi_1(x_1,x')=x_1$, and $\pi_2: \p M\to M_0, \: \pi_2(x_1,x')=x'=(x_2,\ldots,x_{n})$, are smooth. From here, our aim is to show that $\pi_2$ is a local diffeomorphism in $\p M_{+,\varepsilon}$. To accomplish this, we note that by definition, the vector field $\p x_1$ is transversal to $\p M$ on $\p M_{+,\varepsilon}$. Thus, if $z_2,\ldots, z_{n}$ are some local coordinates in $\p M_{+,\varepsilon}$, the functions $x_1,z_2,\ldots z_n$ form local coordinates in $\R \times M_0$ near $z_0$. Moreover, the map $x \mapsto (x_1,x')$ is a diffeomorphim.
Thus, the $(n-1)\times (n-1)$ matrix $\frac{\p x_\alpha}{\p z_\beta}$ for $\alpha,\beta= 2,\ldots, n$, which is also the differential of $\pi_2$, is invertible. By the inverse function theorem, the map $\pi_2$ is a local diffeomorphism in $\p M_{+,\varepsilon}$.

Let $x \in \p M_{+,\varepsilon}$ be an arbitrary point, and let $\mathcal{U}\subset \p M_{+,\varepsilon}$ be a neighborhood of $x$ such that $\pi_2|_{\mathcal{U}}$ is a diffeomorphism. Then it follows from the change of variables formula that the pullback of the surface elements satisfy $(\pi_2)^\ast(dS_g)=J_{\pi_2}dV_{g_0}$, where $J_{\pi_2}$ is the Jacobian of $\pi^{-1}_2$. Therefore, we have
\[
\int_{\mathcal{U}}|v_s|^2dS_g=\int_{\pi_2(\mathcal{U})}|v_s\circ \pi_2^{-1}|^2J_{\pi_2}dV_{g_0}.
\]
Furthermore, after possibly choosing a smaller set $\mathcal{U}$, we see that the Jacobian $J_{\pi_2}$ is bounded on $\pi_2(\mathcal{U}) \subset M_0$. Therefore, we deduce from \eqref{eq:L2_power_y} and \eqref{eq:bound_of_v} that
\[
\int_{\mathcal{U}}|v_s|^2dS_g=\int_{\pi_2(\mathcal{U})}|v_s\circ \pi_2^{-1}|^2J_{\pi_2}dV_{g_0}=\cO(1).
\]
Since $x \in \p M_{+,\varepsilon}$ was arbitrarily chosen, we can choose a larger $\varepsilon$ and obtain a finite cover for $\p M_{+,\varepsilon}$ consisting of the sets $\mathcal{U}$ as above by shrinking $\p M_{+,\varepsilon}$. This leads to estimate \eqref{eq:beam_est_boundary}.

Whence, estimates \eqref{eq:trace} and \eqref{eq:beam_est_boundary} yield
\begin{equation}
\label{eq:est_bdy_vpr}
\|v_s\|_{L^2(\Sigma_{+,\varepsilon})}+\|r_1\|_{L^2(\Sigma)}=o(h^{-1/2}), \quad h\to 0.
\end{equation}

On the other hand, we have
\begin{align*}
\bigg(\int_{\Sigma_{+, \varepsilon}}|\p_\nu u e^{-s(\beta t +x_1)}|^2dS_gdt\bigg)^{1/2} 
&= \frac{1}{\sqrt \varepsilon}\bigg(\int_{\Sigma_{+, \varepsilon}} \varepsilon |\p_\nu u e^{-s(\beta t +x_1)} |^2dS_gdt\bigg)^{1/2}
\\
&\le \frac{1}{\sqrt \varepsilon} \bigg(\int_{\Sigma_{+, \varepsilon}} \p_\nu \varphi |\p_\nu u e^{-s(\beta t +x_1)} |^2dS_gdt\bigg)^{1/2}
\\
&\le \frac{1}{\sqrt \varepsilon}\bigg(\int_{\Sigma_{+}} \p_\nu \varphi|\p_\nu u e^{-s(\beta t +x_1)} |^2dS_gdt \bigg)^{1/2},
\end{align*}
where  we used $\Sigma_+=(0,T)\times \p M_+^{\text{int}}$.

Using the boundary Carleman estimate \eqref{eq:Car_est_conjugated} and   \eqref{eq:ibvp_difference}, we get
\[
\bigg(\int_{\Sigma_{+}} \p_\nu \varphi|\p_\nu u e^{-s(\beta t +x_1)} |^2dS_gdt\bigg)^{1/2}
\le\mathcal{O}(h^{1/2})\frac{1}{\sqrt \varepsilon}\|e^{-s(\beta t +x_1)}\mathcal{L}_{c,g,a_1,q_1}u \|_{L^2(Q)}.
\]
Therefore, estimates \eqref{eq:bifurgation} and \eqref{eq:est_bdy_vpr} yield  \eqref{eq:est_int_id_rhs_2}. This completes the proof of Lemma \ref{lem:rhs_est}.
\end{proof}

\subsection{Uniqueness of the potential}

In this subsection we assume $a_1=a_2$ and prove that $\mathcal{C}_{g,a_1, q_1}=\mathcal{C}_{g,a_2,q_2}$ implies $q_1=q_2$. Our starting point is again the integral identity \eqref{eq:int_id_2}. When $a_1-a_2=a=0$, this reads 
\begin{equation}
\label{eq:int_id_q}
\int_Q qu_2\overline{u_1}dV_gdt= \int_M c^{-1}\p_t u(T, x)\overline{u_1(T, x)}dV_g-  \int_{\Sigma\setminus V}\p_\nu u\overline{u_1}dS_g dt.
\end{equation}

Since $a=0$, Lemma \ref{lem:rhs_est} implies that both terms on the right-hand side of \eqref{eq:int_id_q} vanish in the limit $h\to 0$. Therefore, we have
\[
\int_Q qu_2\overline{u_1}dV_gdt\to0,\quad h\to 0.
\]
On the other hand, by substituting the CGO solutions \eqref{eq:CGO_v_old} and \eqref{eq:CGO_u2_old} into the left-hand side of \eqref{eq:int_id_q}, we get
\[
\int_Q qu_2\overline{u_1}dV_gdt=\int_Q qe^{2i\lambda (\beta t+x_1)} \confm (\overline{v_s}w_s +\overline{v_s}r_2+ w_s\overline{r_1}+\overline{r_1}r_2)dV_gdt.
\]
It follows from estimates \eqref{eq:estimate_v}, \eqref{eq:estimate_w}, and \eqref{eq:est_remainder} that
\[
\int_Q q  e^{2i\lambda (\beta t+x_1)} \confm (\overline{v_s}r_2+w_s\overline{r_1}+\overline{r_1}r_2)dV_gdt=o(1),\quad h\to 0.
\]
Therefore, we obtain
\[
\int_Q q e^{2i\lambda (\beta t+x_1)} \confm \overline{v_s}w_s dV_gdt\to0, \quad h\to 0.
\]
By repeating the arguments leading from \eqref{eq:quasimode_zero} to \eqref{eq:cgo_sub}, with the assumptions that $q_1,q_2 \in C(\overline{Q})$ and $q_1=q_2$ on $\p Q$, we get
\begin{align*}
&\int_{0}^{\frac{L}{\sqrt{1-\beta^2}}}\int_\R \int_\R  e^{2i\lambda ((\beta^2-1) \tT+p)-2(1-\beta^2)\lambda r} (cq)(\beta \tT, p-\tT, \gamma(\sqrt{1-\beta^2}r))
\\
& \quad \quad \quad \quad \quad \quad \times e^{\overline{\Phi_1(\tT, p, r)}+\Phi_2(\tT, p, r)}\eta(\tT,p, r)dp d\tT dr=0.
\end{align*}
Then we follow the same arguments from \eqref{eq:cgo_sub} onward in the proof for the uniqueness of the damping coefficient to obtain $q_1=q_2$. This completes the proof of Theorem \ref{thm:main_result_damping}.

\bibliographystyle{abbrv}
\bibliography{bibliography_damping}

\end{document}